\numberwithin{equation}{section}
\theoremstyle{plain}
\newtheorem{theorem}{Theorem}
\numberwithin{theorem}{section}
\newtheorem{proposition}[theorem]{Proposition}
\newtheorem{example}[theorem]{Example}
\newtheorem{lemma}[theorem]{Lemma}
\newtheorem{corollary}[theorem]{Corollary}
\newtheorem{remark}[theorem]{Remark}
\newtheorem{conjecture}[theorem]{Conjecture}
\newcommand{\C}{\mathbb{C}}
\newcommand{\Z}{\mathbb{Z}}
\newcommand{\PP}{\mathbb{P}}
\newcommand{\F}{\mathbb{F}}
\newcommand{\R}{\mathbb{R}}
\newcommand{\arxiv}[1]{\href{http://arxiv.org/abs/#1}{{\tt arXiv:#1}}}
\date{}
\begin{document}

\title{\bf The Universal Kummer Threefold}

\author{Qingchun Ren \and Steven V Sam \and Gus Schrader \and Bernd Sturmfels}

\maketitle

 \begin{abstract}
 \noindent The universal Kummer threefold is a $9$-dimensional variety that represents the total space of the 6-dimensional family of Kummer threefolds in $\PP^7$. We compute defining polynomials for three versions of this family, over the Satake hypersurface, over the G\"opel variety, and over the reflection representation of type $\mathrm{E}_7$.  We develop  classical themes such as theta functions and Coble's quartic hypersurface using current tools from combinatorics, geometry, and commutative algebra.
Symbolic and numerical computations for genus $3$ moduli spaces appear alongside toric and tropical methods.
   \end{abstract}

\tableofcontents

\section{Introduction}

Kummer varieties are quotients of abelian varieties
by their involution $x \mapsto -x$. Each $g$-dimensional
Kummer variety has a natural embedding into $\mathbb{P}^{2^g-1}$
by second order theta functions.
The moduli space is recorded in a second copy
of $\mathbb{P}^{2^g-1}$ by the corresponding theta constants. By taking the closure, this construction defines
the {\em universal Kummer variety} $\mathcal{K}_g$.
This is an irreducible projective variety  of  dimension $\binom{g+1}{2} + g$,
defined over the integers, and naturally embedded in
$\mathbb{P}^{2^g-1} \times \mathbb{P}^{2^g-1}$.
The image of the projection of $\mathcal{K}_g$ onto the $\PP^{2^g-1}$ of theta constants
is the Satake compactification of the
$\binom{g+1}{2}$-dimensional moduli space $\mathcal{A}_g(2,4)$.
The various Kummer varieties of dimension $g$ appear as fibers of this map.

Our object of interest is the {\em universal Kummer ideal} $\,\mathcal{I}_g$.
This is the bihomogeneous prime ideal 
in a polynomial ring in $2^{g+1}$ unknowns that defines
$\mathcal{K}_g$ as a subvariety of  $\mathbb{P}^{2^g-1} \times \mathbb{P}^{2^g-1}$.
What motivated this project was our desire to understand  the  ideal $\mathcal{I}_3$
of the universal Kummer threefold in $\mathbb{P}^7 \times \mathbb{P}^7$.
Grushevsky and Salvati Manni write in \cite[\S 6]{GS} that the ideal
$\mathcal{I}_3$ ``is known quite explicitly'', and we were wondering how
to communicate its generators to {\tt Macaulay 2}  \cite{m2}.
Our current state of knowledge about this question is
presented in Section~\ref{sec:kumeqn} of this paper. The example of $\mathcal{I}_2$ is
worked out in Example~\ref{ex:Kummsur} below.

In our attempts to understand $\mathcal{I}_3$, we also studied a variant of $\mathcal{K}_3$, 
over a base $\mathcal{G}$ that is a moduli space for plane quartics. We discovered a beautiful mathematical story that connects classical algebraic geometry topics, such as Coble's quartic hypersurface, G\"opel functions, and the type $\mathrm{E}_7$ reflection arrangement, with more modern topics, such as toric geometry, tropical methods, and numerical computation. Our study of this variation was originally inspired by Vinberg's theory of $\theta$-representations \cite{vinberg}, but can be defined without any reference to this theory. However, as we shall see in Section~\ref{sec:kumeqn}, this point of view gives the construction of a large number of equations that we cannot see how to construct otherwise.

Moduli of plane quartics have been studied extensively, notably in \cite{Cob} and \cite[\S IX]{DO}.
A key feature of Kummer varieties of the Jacobians of plane quartics is this:
in their embedding in $\PP^7$, there is a unique quartic hypersurface, called the {\it Coble quartic}, whose singular locus is the Kummer variety. We are interested in the moduli space of Coble quartics.
This moduli space can be parametrized by the G\"opel functions mentioned in \cite[\S IX]{DO}. These functions embed it as a subvariety $\mathcal{G}$
of $\PP^{134}$, so we call it the {\it G\"opel variety}. In fact, it sits in a linear $\PP^{14} \subset \PP^{134}$, and it can be alternatively parametrized by a Macdonald representation of the Weyl group of type $\mathrm{E}_7$. 
Hence the rich combinatorics of reflection arrangements is embodied in $\mathcal{G}$. 
Colombo, van Geemen and Looijenga   \cite{CGL} studied the G\"opel variety from this perspective, and realized it as a moduli space of marked del Pezzo surfaces of degree $2$.

It is natural to ask for the prime ideal of $\mathcal{G}$ embedded in $\PP^{14}$.
We solve this problem and calculate its graded Betti numbers. 
A delightful picture emerges in the
 embedding in $\PP^{134}$. Together with trinomial linear equations that define $\PP^{14} $
 in $ \PP^{134}$,    all equations are
    written as {\it binomials}. This shows that there is a larger toric variety $\mathcal{T}$ sitting in $\PP^{134}$ that contains $\mathcal{G}$ as a linear section.  In fact, there are $35$ cubics and $35$ quartics that
     cut out      $\mathcal{G}$. The cubics are genus 3 analogues of the {\em toric Segre cubic relations}
    of    Howard~{\it et al.} in~\cite[(1.2)]{HMSV}.

We obtain a range of results on $\mathcal{T}$ and the corresponding $35$-dimensional polytope $\mathcal{A}$. 
It has $135$ vertices and $63$ distinguished facets, and their incidence relations admit a compact description in terms of the finite symplectic space $(\mathbb{F}_2)^6$. Furthermore, it exhibits symmetry under the Weyl group of type $\mathrm{E}_7$. It should be very fruitful to further study the combinatorics of the polytope $\mathcal{A}$, such as its face lattice and Ehrhart polynomial. 
 Even more important, this also paves the way for studying the {\it tropicalization} of $\mathcal{G}$, which we hope will provide a useful model for genus 3 curves over fields with a non-trivial, non-archimedean valuation.

\smallskip

All relevant basics regarding 
 theta functions and
abelian varieties will be reviewed in the next two sections,
along with pointers to the literature and to software.
As a warm-up, we first present the  solution to our motivating
problem for the much easier case of  genus $g=2$.

\begin{example}[The Universal Kummer surface]
\label{ex:Kummsur} \rm
The five-dimensional variety $\mathcal{K}_2$ is a hypersurface 
of degree $(12,4)$ in $\mathbb{P}^3 \times \mathbb{P}^3$.
Its principal prime ideal $\mathcal{I}_2$ in the polynomial ring $\mathbb{Q}[u_{00},u_{01},u_{10},u_{11};
x_{00},x_{01},x_{10},x_{11}]$  is generated by $1/16$ times
the determinant~of 
\begin{equation}
\label{universalk2}
\begin{bmatrix}
x_{00}^4 {+} x_{01}^4 {+} x_{10}^4 {+} x_{11}^4 &
 x_{00}^2 x_{01}^2 {+} x_{10}^2 x_{11}^2 &
  x_{00}^2 x_{10}^2 {+} x_{01}^2 x_{11}^2 &
x_{00}^2 x_{11}^2 {+} x_{01}^2 x_{10}^2 & x_{00} x_{01} x_{10} x_{11} \\
4 u_{00}^3 & 2 u_{00} u_{01}^2 & 2 u_{00} u_{10}^2 & 2 u_{00} u_{11}^2 & u_{01} u_{10} u_{11} \\
4 u_{01}^3 & 2 u_{00}^2 u_{01} & 2 u_{01} u_{11}^2 & 2 u_{01} u_{10}^2 & u_{00} u_{10} u_{11} \\
4 u_{10}^3 & 2 u_{10} u_{11}^2 &  2 u_{00}^2 u_{10} &  2 u_{01}^2 u_{10} &  u_{00} u_{01} u_{11} \\
4 u_{11}^3 & 2 u_{10}^2 u_{11} & 2 u_{01}^2 u_{11} & 2 u_{00}^2 u_{11} & u_{00} u_{01} u_{10} 
\end{bmatrix}
\end{equation}
For fixed $u_{ij}$, this $5 \times 5$-determinant defines Kummer's quartic 
as a surface
in $\mathbb{P}^3$, with coordinates $(x_{00}{:}x_{01}{:}x_{10}{:}x_{11})$,
written as in \cite[Exercise 3, page 204]{BL} or \cite[page 354]{Mum}.

Note that the last four rows in (\ref{universalk2}) represent the Jacobian matrix of the first row.
The matrix derives from the fact that  $(u_{00}{:}u_{01}{:}u_{10}{:}u_{11})$ 
must be a singular point on the Kummer surface.
The surface and its $16$ nodes are invariant
under the group $(\mathbb{Z}/2\mathbb{Z})^4$
which acts by sign changes and permutations
  \cite[page 353]{Mum}.
The $15$ other nodes are easily found:
$$
(u_{00} : -u_{10}: u_{01}: -u_{11}), \,
(u_{00}: u_{10}: -u_{01}: -u_{11}),\,
 \ldots , \,
(u_{11}: -u_{01}: -u_{10}: u_{00}).
$$
A convenient representation of the $16_6$ configuration of nodes 
is the matrix product
\begin{equation}
\label{eq:UX}
 \begin{pmatrix}
\phantom{-} u_{00} & \phantom{-}  u_{10}  &  \phantom{-}  u_{01} &   \phantom{-}  u_{11} \, \\
                          \phantom{-} u_{11} &     -u_{01} &  \phantom{-}   u_{10} &     -u_{00} \, \\
                          \phantom{-}  u_{01} &   \phantom{-}   u_{11} &    -u_{00} &   -u_{10} \, \\
                           -u_{10} &   \phantom{-} u_{00} &   \phantom{-}  u_{11} &     -u_{01} \,
\end{pmatrix} \cdot
\begin{pmatrix}
                          \phantom{-}  x_{00} &  \phantom{-}   x_{11} &     -x_{01} &  \phantom{-}   x_{10} \, \\
                         \phantom{-}   x_{10} &  \phantom{-}   x_{01} &    \phantom{-}  x_{11} &     -x_{00} \, \\
                        \phantom{-}    x_{01} &   -x_{10} &   \phantom{-}  x_{00} & \phantom{-}     x_{11} \, \\
                        \phantom{-}    x_{11} &    -x_{00} &    -x_{10} &    -x_{01} \,
\end{pmatrix},
\end{equation}
whose $16$ entries are the linear forms whose coefficients are the $16$ nodes.
As explained in Hudson's book \cite[\S 16]{Hu}, the
combinatorial structure of the $16$ nodes can be read off from this $4 \times 4$-matrix,
and it leads to various alternate forms of the defining quartic polynomial 
in \cite[\S 19]{Hu}.
In \cite[\S 102]{Hu} it is shown that the product (\ref{eq:UX})
expresses  quadratic monomials in theta functions with characteristics
in terms of the second order theta functions.

To illustrate how our coordinates can express geometric properties, we note that
\begin{equation}
\label{eq:UU}
\begin{matrix}
(u_{00} u_{11}+u_{01} u_{10}) (u_{00} u_{01}+u_{10} u_{11}) (u_{00}u_{10}+u_{01} u_{11}) 
(u_{00}u_{10}-u_{01} u_{11})\\ (u_{00} u_{11}-u_{01} u_{10})
(u_{00}^2+u_{01}^2-u_{10}^2-u_{11}^2) (u_{00}^2-u_{01}^2+u_{10}^2-u_{11}^2)  \\
(u_{00}u_{01}-u_{10} u_{11}) (u_{00}^2-u_{01}^2-u_{10}^2+u_{11}^2)(u_{00}^2+u_{01}^2+u_{10}^2+u_{11}^2)
\end{matrix}
\end{equation}
vanishes if and only if the given abelian surface 
is a product of two elliptic curves.
If this happens, then the  $16_6$ configuration of all nodes
degenerates to a more special matroid,
and the quartic Kummer surface in $\mathbb{P}^3$ degenerates to a 
double quadric.
The  $10$ factors in (\ref{eq:UU}) are the non-zero entries left in
the matrix product (\ref{eq:UX}) after replacing each $x_{ij}$  by $u_{ij}$. \qed
\end{example}

This article is organized as follows.
In Section~\ref{sec:thetaintro} we review classical material
that can be mostly found in the books of
Coble \cite{Cob} and Dolgachev--Ortland \cite{DO}.
We define a Kummer threefold in $\PP^7$ as
the image of a transcendental map whose coordinates are
second-order theta functions. Each Kummer threefold is the singular
locus of  the associated Coble quartic hypersurface in $\PP^7$,
which is a natural genus three analogue of the
Kummer surface in $\PP^3$.

In Section~\ref{sec:satakehyper} we focus on the moduli space $\mathcal{A}_3(2,4)$ of polarized abelian threefolds with suitable level structure.
That space is a quotient of the Siegel upper halfspace.
It is embedded into $\PP^7$ by second-order theta constants \cite{GG}. The resulting {\em Satake hypersurface} $\mathcal{S}$ has degree $16$. Its defining polynomial has $471$ terms with integer coefficients, displayed in Proposition \ref{lem:GG}.
We examine how the combinatorics  of this polynomial expresses the geometry of
$\mathcal{A}_3(2,4)$, notably its hyperelliptic locus, its
Torelli boundary, and its Satake boundary. Later in Table~\ref{fig:glass},
this is refined to the stratification of $\mathcal{S}$ that was found by
Glass \cite[Theorem 3.1]{glass}.

The theta series in Sections~\ref{sec:thetaintro} and \ref{sec:satakehyper}
are defined over the field of complex numbers $\C$, and we use floating 
point approximations for computing them. 
The resulting algebraic objects, however, do not require the complex numbers.
This is, of course, well-known to the experts in abelian varieties \cite{BL}.
All the ideals we feature in this paper can be generated
by polynomials with integer coefficients, and their projective varieties
are thus defined over any field.

In Section~\ref{sec:param:gopel}  we express the G\"opel variety 
$\mathcal{G}$ as the Zariski closure of the image of an explicit rational
map $\PP^6 \dashrightarrow \PP^{14}$ of degree 24. Its $15$ coordinates  are polynomials of degree $7$
that span an irreducible representation of the Weyl group of type $\mathrm{E}_7$.
The root system and its reflection arrangement play a prominent role,
as do configurations of 7 points in $\PP^2$. After completion of our work, we learned that our
 parametrization of the G\"opel variety had already been 
studied in \cite{CGL}, under the name {\em Coble linear system}. 
The connection is made explicit in Theorem \ref{thm:indeterminacy}
where we give an affirmative answer to \cite[Question 4.19]{CGL}. 

In Section~\ref{sec:gopeleqns} we study the defining prime ideal of the G\"opel variety $\mathcal{G}$.
We show that it is minimally generated by $35$ cubics and $35$ quartics
in $15$ variables.
This ideal is Gorenstein, it has degree $175$, and we determine its Hilbert series and its minimal free resolution.

Section~\ref{sec:toricgopel} is concerned with a beautiful re-embedding of the 
G\"opel variety  $\mathcal{G} \subset \PP^{14}$
into a projective space of dimension $134$.
The $135$ coordinates are the G\"opel functions of \cite[\S 9.7]{DO}.
The Weyl group $W(\mathrm{E}_7)$ acts on these by signed permutations.
We construct a toric variety $\mathcal{T}$ of dimension $35$ in
$\PP^{134}$ whose intersection with $\PP^{14}$ is $\mathcal{G}$.
Algebraically, the ideal of $\mathcal{G}$  is now generated by binomials and linear trinomials.
We study the combinatorics of the toric ideal of $\mathcal{T}$ and its associated convex polytope
$\mathcal{A}$.
Its $63$ distinguished facets and its $135$ vertices are indexed by the lines
and the Lagrangians in the finite symplectic space $(\F_2)^6$.

In Section~\ref{sec:univcoble} we return to the embedding of $\mathcal{K}_3$ 
defined by theta constants and theta functions.
The {\em universal Coble quartic} is an irreducible subvariety
of codimension two in $\PP^7 \times \PP^7$. It is the
complete intersection of the Satake hypersurface $\mathcal{S}$
of degree $(16,0)$ and one other hypersurface $\mathcal{C}$ of
degree $(28,4)$. The latter is given by an explicit polynomial with $372060$ terms in $16$ variables. Together, the two polynomials generate a prime ideal.
In response to the  preprint version of this article, Grushevsky and Salvati Manni \cite{gsnew} found a shorter representation of the same polynomial, using a more conceptual geometric approach.
This  was further developed by
Dalla Piazza and Salvati Manni in \cite{dpsm};
see Remark \ref{rmk:piazza}.

The universal Kummer variety $\mathcal{K}_3$ lives in
 $ \mathcal{S} {\times} \PP^7 \subset \PP^7 {\times} \PP^7$.
 Other variants of this variety can be defined in
 $\PP^6 \times \PP^7$, via the parametrization of Section~\ref{sec:param:gopel},
 or in $\mathcal{G} {\times} \PP^7 \subset \PP^{14} {\times} \PP^7$,
 via the G\"opel embeddings of Sections~\ref{sec:gopeleqns} and \ref{sec:toricgopel}.
 In Section~\ref{sec:kumeqn} we study the bihomogeneous prime ideals
 for these three variants of the universal Kummer variety.
 We derive lists of minimal generators for two of these ideals.
  Conjectures \ref{conj:uno} and \ref{conj:due} state that these lists are complete.
 
 Section~\ref{sec:tropgeom} examines all of our constructions from the perspective of tropical geometry.
 That perspective was further developed by three of us in the
 subsequent article \cite{RSS}.
 
 \subsection*{Supplementary materials}

We have created supplementary files so that  the reader can reproduce 
many of the calculations that are claimed throughout the text. Most of them are in the format of {\tt Macaulay 2} \cite{m2}. These files can be found at 
\url{http://math.berkeley.edu/~svs/supp/univ_kummer/}. 
We have also included the supplementary files in version 3 of the arXiv submission of this paper, and they can be obtained by downloading the source.

A word on the symbolic computations we present: many of the calculations are significantly faster (almost all finishing within a few seconds) if the field of coefficients chosen is finite; one that we have used is $\Z/101$. However, we have made no attempt to verify which primes give bad reduction. All calculations can also be performed over the rational numbers. In this setting, many calculations take at most a few minutes, and the hardest one in Theorem~\ref{thm:gopel} took approximately 15 minutes on a modern mid-level performance computer.

\subsection*{Acknowledgements}

We thank Melody Chan, Igor Dolgachev, Jan Draisma, 
Bert van Geemen, Sam Grushevsky, Thomas Kahle, Daniel Plaumann, and Riccardo Salvati Manni for helpful communications.
We made extensive use of the software packages 
{\tt Sage} \cite{sage}, {\tt Macaulay~2} \cite{m2}, and {\tt GAP} \cite{gap}.
Qingchun Ren was supported by a Berkeley fellowship.
Gus Schrader was supported by a Fulbright fellowship.
Steven Sam was supported by an NDSEG fellowship and a Miller research fellowship.
Bernd Sturmfels was partially supported by NSF grant DMS-0968882.

\section{From theta functions to Coble quartics} \label{sec:thetaintro}

In this section, we review Riemann's theta function
and its relatives in the case of genus $3$,
and we use this to give a definition of Kummer threefolds as subvarieties of $\PP^7$. We also discuss background material on the action of the 2-torsion of the associated abelian threefold on $\PP^7$ and introduce the Coble quartic associated to a non-hyperelliptic Kummer threefold.

\smallskip

Let $\tau$ be a symmetric $3\times 3$ matrix with complex entries whose imaginary part is positive definite.  The set $\mathfrak{H}_3$ of  all such matrices is a six-dimensional complex manifold, called the {\em Siegel upper halfspace}. Each matrix $\tau\in\mathfrak{H}_3$ determines a lattice $\Lambda=\mathbb{Z}^3+\tau \mathbb{Z}^3$ of rank $6$ in $\mathbb{C}^3$, and a three-dimensional abelian variety $A_\tau=\mathbb{C}^3/\Lambda$. The {\it Riemann theta function} corresponding to a matrix $\tau\in\mathfrak{H}_3$ is the function $\theta \colon \mathbb{C}^3\rightarrow\mathbb{C}$ defined by the Fourier~series
\begin{equation}
\label{thetafct}
\theta(\tau; z) \,\,= \,\, \sum_{n \in\mathbb{Z}^3}\exp\left[\pi i n^t \tau  n +2\pi i  n^t z \right].
\end{equation}
This series converges for all $ z \in \C^3$ and $\tau \in \mathfrak{H}_3$,
and it satisfies the functional equation
\begin{equation}
\label{fcteqn}
\theta(\tau ;z + a + \tau b\,) \,\, = \,\,\, \theta(\tau \,; z) \cdot \exp \bigg[ 2 \pi i \bigl(-b^t z - \frac{1}{2} 
b^t \tau b \bigr)\biggr]
\qquad \hbox{for} \,\, a,b \in \mathbb{Z}^3.
\end{equation}
Deconinck {\it et al.}~\cite{DHBHS} gave a careful convergence analysis and
they implemented the numerical evaluation of $\theta(\tau;z)$ in {\tt maple}.
Their work has been extended by  Swierczewski and Deconinck  \cite{SD}
who implemented
the evaluation of abelian functions in {\tt Sage} \cite{sage}.  The Riemann theta function $\theta$ 
can now be called in {\tt Sage} as {\tt RiemannTheta(tau)(z)}, where 
{\tt tau} is a Riemann matrix and {\tt z} is a complex vector.
We used that {\tt Sage} code extensively.

Every pair of binary vectors $\epsilon,\epsilon'\in \{0,1\}^3$ defines a {\it theta function with characteristics}
\begin{equation}
\label{eq:thetachar}
\theta[\epsilon|\epsilon'](\tau ; z) \,\,= \,\, 
\sum_{n \in\mathbb{Z}^3}\exp\left[\pi i (n+\frac{\epsilon}{2})^t \tau   (n+ \frac{\epsilon}{2}) 
\,+\,2\pi i  (n+ \frac{\epsilon}{2})^t (z+\frac{\epsilon'}{2}) \right].
\end{equation}
From inspection of this Fourier series, one can see that
\begin{equation}
\theta[\epsilon|\epsilon'](\tau;-z) \,\,\,=\,\,\,
(-1)^{\epsilon^t\epsilon'} \cdot \theta[\epsilon|\epsilon'](\tau;z).
\end{equation}
Therefore, of the $2^{2\cdot 3}=64$ theta functions with characteristics, 
precisely $2^{3-1}(2^3+1)=36$ are even functions of the argument  $z \in \mathbb{C}^3$, 
and the other $2^{3-1}(2^3-1)=28$ are odd functions of $z$.  
We shall refer to these as {\it even (or odd) theta functions}.  

Finally, for any binary vector $\sigma\in \{0,1\}^3$, we consider the {\em second order theta function}
\begin{equation} \label{secondtheta}
\begin{split} 
\Theta_2[\sigma](\tau; z)
& \,\,\, = \,\,\, \theta \bigl(  2\tau ;  2z+\tau\sigma \bigr) \cdot
\exp \left[ \pi i\left(\frac{\sigma^t\tau\sigma}{2}+2\sigma^t z \right) \right] \\
&  \,\,\, = \,\,\, \sum_{n\in \Z^3}\exp\left[  
2\pi i \bigl(n+\frac{\sigma}{2} \bigr)^t \tau \bigr( n+\frac{\sigma}{2})\,+\,
4 \pi i \bigl(n+\frac{\sigma}{2}\bigr)^t z
\right].
\end{split}
\end{equation}
The second order theta functions are related to the theta functions with characteristics of an isogenous abelian threefold by the formula
\begin{equation}
\Theta_2[\sigma](\tau;z)\, \,\,\,=\,\,\,\theta[\sigma|0](2\tau;2z).
\end{equation}
Further relations between first and second order theta functions are the {\it addition theorem}
\begin{equation}
\label{eq:addition}
\theta[\epsilon|\epsilon'](\tau;z+w) \cdot \theta[\epsilon|\epsilon'](\tau;z-w)
\,\,\,=\,\,\,
\sum_{\sigma\in\mathbb{F}_2^3}(-1)^{\sigma\cdot\epsilon'}
\cdot \Theta_2[\sigma](\tau;w) \cdot \Theta_2[\sigma+\epsilon](\tau;z)
\end{equation}
and its inversion
\begin{equation}
\label{eq:additioninv}
8\cdot\Theta_2[\sigma](\tau;w)\cdot \Theta_2[\sigma+\epsilon](\tau;z)
\,\,\,\,=\,\,\, \sum_{\epsilon'\in\mathbb{F}_2^3}(-1)^{\sigma\cdot\epsilon'}
\cdot \theta[\epsilon|\epsilon'](\tau;z+w) \cdot \theta[\epsilon|\epsilon'](\tau;z-w).
\end{equation}

For a fixed matrix $\tau \in \mathfrak{H}_3$, the 
eight second order theta functions define the {\em Kummer~map}
\begin{equation}
\label{kummermap}
\kappa_\tau \colon 
 \C^3 \rightarrow \PP^7 ,\,
\, z \mapsto \bigl(
 \Theta_2[000](\tau;z)  :
 \Theta_2[001](\tau;z) : \cdots : 
  \Theta_2[111](\tau;z) \bigr).
\end{equation}
The identity (\ref{fcteqn}) implies that
$\kappa_\tau$ factors through a map $ A_\tau \rightarrow \PP^7$
from the abelian threefold $A_\tau = \C^3/\Lambda$.
This map, which we also denote by $\kappa_\tau $, has the following
geometric description. The equation $\theta(\tau;z) = 0$
defines the {\em theta divisor} $\Theta$ on $A_\tau$. The divisor $ 2 \Theta$ is ample but not very ample.
The eight functions (\ref{secondtheta}) form a basis for its space
of sections. These are known as the {\em Schr\"odinger coordinates},
and we denote them by $x_{000},x_{001}, \ldots, x_{111}$.
The morphism
$A_\tau \rightarrow \PP(\mathrm{H}^0(A_\tau,2 \Theta)) \simeq \PP^7$
is given in coordinates by  (\ref{kummermap}).
The image of the Kummer map $\kappa_\tau $ in $\PP^7$ is
isomorphic to the quotient $A_\tau / \{z = -z\}$. We call this 
the {\em Kummer threefold} of~$\tau$.

Let $A_\tau[2]$ denote the subgroup of two-torsion points in  the abelian threefold $A_\tau$.
This is a group of order $64$ which we identify with $\Lambda/2\Lambda\simeq
 (\mathbb{F}_2)^6$.  The abelian group 
$A_\tau[2] \simeq (\mathbb{F}_2)^6$
acts naturally on the set of second order theta functions via
  \begin{equation}
\label{eq:involution}
\Theta_2[\sigma](\tau;z) \,\,\mapsto \,\, \Theta_2[\sigma](\tau;z+\delta) 
\qquad 
\hbox{where $\delta\in A_\tau[2]$.}
\end{equation}
This defines an action on  $\PP(\mathrm{H}^0(A_\tau,2 \Theta))
 \simeq \PP^7$ by permuting the coordinates $x_{ijk}$ up to sign. The action lifts to a linear representation of the {\em Heisenberg group} $H$ (see \cite[Chapter 6]{BL}).
This mildly non-abelian group is a certain central extension
\[
1 \to \mathbb{F}_2 \to H \to A_\tau[2] \to 1.
\]
This can be made explicit in terms of the Schr\"odinger coordinates $x_{000}, x_{001}, \ldots, x_{111}$ we are using on $\PP^7$.
The Heisenberg group $H$ is generated by the following six operators: 
\begin{equation}
\label{eq:sixoperators}
\begin{matrix}
x_{i,j,k} \mapsto x_{i+1,j,k}, \quad \,& 
x_{i,j,k} \mapsto x_{i,j+1,k}, \quad \,& 
x_{i,j,k} \mapsto x_{i,j,k+1}, \quad \,\\
x_{i,j,k} \mapsto (-1)^ix_{i,j,k},& 
 x_{i,j,k} \mapsto (-1)^jx_{i,j,k},&
  x_{i,j,k} \mapsto (-1)^kx_{i,j,k}.
\end{matrix}
\end{equation}
These operators commute up to sign and give the projective representation of $A_\tau[2]$.

\medskip

For any given matrix $\tau \in \mathfrak{H}_3$,
the Kummer threefold has degree $24$ in $\PP^7$.  When $\tau$ is the period matrix of a smooth non-hyperelliptic curve of genus three, the prime ideal of the corresponding Kummer threefold is
minimally generated by $8$ cubics and $6$ quartics in the variables ${\bf x}$.
The Hilbert polynomial of such a Kummer threefold, 
 which is $4 n^3 + 4$, agrees with the Hilbert function for $n \geq 1$.
This  was first derived by Wirtinger in \cite[\S 21]{wirt}.

\begin{remark} \rm
In commutative algebra \cite{eisenbud}, it is customary to 
also look at higher syzygies. Numerical invariants are read off
from the {\em Betti table}.
For a general Kummer threefold, it is
\begin{Verbatim}[samepage=true]
            0  1  2  3  4 5
     total: 1 14 48 56 24 3
         0: 1  .  .  .  . .
         1: .  .  .  .  . .
         2: .  8  .  .  . .
         3: .  6 48 56 24 3
\end{Verbatim}
This Betti table shows that the Kummer threefold $\kappa_\tau(A_\tau)$ is not
arithmetically Cohen--Macaulay.
Except for  Kummer surfaces in $\PP^3$,
failure to be arithmetically Cohen--Macaulay is a general phenomenon for  abelian varieties of dimension $\geq 2$.
In fact, this holds for
 varieties whose structure sheaf has intermediate cohomology via \cite[Cor.~A1.12, Prop.~A1.16]{geom_syz}. \qed
\end{remark}

To compute the above Betti table (in {\tt Macaulay~2} \cite{m2}),  we used the geometric representation of the Kummer threefold as the singular locus of a certain quartic hypersurface $C_\tau$ in $\PP^7$.  That hypersurface was characterized  by Arthur Coble in his seminal book \cite{Cob}.
We follow Dolgachev and Ortland \cite{DO}
in our discussion of the {\it Coble quartic}~$C_\tau$.

\begin{proposition}[{\cite[Proposition 2.2]{beauville}, \cite[\S 33]{Cob}, \cite[\S IX.5, Proposition 7]{DO}}] \label{prop:CC} 
Let $A_\tau$ be the Jacobian of a smooth non-hyperelliptic curve of genus three.
There exists a unique quartic hypersurface $C_\tau$ in $\mathbb{P}^7$ 
whose singular locus equals the Kummer threefold $\kappa_\tau(A_\tau)$. 
The eight partial derivatives of the defining polynomial of $C_\tau$ span the space of cubics containing $\kappa_\tau(A_\tau)$ and they generate the prime ideal of $\kappa_\tau(A_\tau)$ up to saturation.
\end{proposition}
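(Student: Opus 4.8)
The plan is to convert the statement into representation theory of the Heisenberg group $H$ acting on the $8$-dimensional Schr\"odinger module $W=\mathrm{H}^{0}(A_\tau,2\Theta)$, which we view as the degree-one part $R_{1}$ of the homogeneous coordinate ring $R=\C[x_{000},\dots,x_{111}]$ of $\PP^{7}$. Since $\kappa_\tau$ is the morphism attached to $|2\Theta|$ and factors through the double cover $A_\tau\twoheadrightarrow A_\tau/\{\pm1\}=K:=\kappa_\tau(A_\tau)$, a degree-$n$ form lies in the (saturated, prime) ideal $I(K)$ exactly when it pulls back to $0$ on $A_\tau$; as the second-order theta functions are even in $z$, this identifies $I(K)_{n}$ with the kernel of the multiplication map $\mathrm{Sym}^{n}W\to\mathrm{H}^{0}(A_\tau,2n\Theta)^{+}$ onto the $(-1)$-invariant subspace, and that map is surjective (projective normality of the Kummer in $|2\Theta|$, classical for Jacobians; see \cite{BL}). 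The holomorphic Lefschetz fixed-point formula for $z\mapsto-z$, whose fixed locus consists of the $64$ two-torsion points --- each contributing $1/2^{3}$, since the relevant sign on every two-torsion fibre of $2n\Theta$ is $+1$ --- gives $h^{0}(2n\Theta)^{+}=\tfrac12\bigl((2n)^{3}+8\bigr)=4n^{3}+4$, in agreement with the Hilbert polynomial recorded above. Hence $I(K)_{2}=0$, as $\mathrm{Sym}^{2}W$ and $\mathrm{H}^{0}(4\Theta)^{+}$ both have dimension $36$, while $\dim I(K)_{3}=\binom{10}{3}-112=8$.

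Next I would use the structure of $H$. By Stone--von Neumann, $W$ is the unique irreducible $H$-module on which the centre $\F_{2}$ acts nontrivially, and all other irreducibles are one-dimensional; so $\mathrm{Sym}^{3}W$, on which the centre acts by that same character, is isomorphic to $W^{\oplus15}$, and the $H$-stable $8$-dimensional subspace $I(K)_{3}$ is a single copy of $W$. By Schur's lemma $\mathrm{Hom}_{H}(W,I(K)_{3})=\mathrm{Hom}_{H}(W,W)$ is one-dimensional; fix a generator $\phi_{0}\colon W\xrightarrow{\ \sim\ }I(K)_{3}\hookrightarrow\mathrm{Sym}^{3}W$ and set $C_{ijk}:=\phi_{0}(x_{ijk})$. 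Being $H$-equivariant, $\phi_{0}$ is a fixed vector of $W^{\vee}\otimes\mathrm{Sym}^{3}W$, so its exterior derivative (the ``curl'') $d\phi_{0}$ is a fixed vector of $\wedge^{2}W^{\vee}\otimes\mathrm{Sym}^{2}W$, i.e.\ an element of $\mathrm{Hom}_{H}(\wedge^{2}W,\mathrm{Sym}^{2}W)$. Classically $\mathrm{Sym}^{2}W$ and $\wedge^{2}W$ are the sums of the one-dimensional characters indexed respectively by the $36$ even and the $28$ odd theta characteristics, and these index sets are disjoint, so this $\mathrm{Hom}$-space vanishes: $d\phi_{0}=0$. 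By the graded Poincar\'e lemma there is a unique quartic $F\in\mathrm{Sym}^{4}W$ with $\partial F/\partial x_{ijk}=C_{ijk}$; it is $H$-invariant and equals $\tfrac14\sum_{ijk}x_{ijk}C_{ijk}$ by Euler's identity. Since $\phi_{0}\neq0$ we have $F\neq0$, its eight partials span $I(K)_{3}$, and $F\in I(K)$ by Euler again, so $C_\tau:=\{F=0\}$ is a quartic hypersurface containing $K$ that realizes the assertion about the partial derivatives.

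It remains to identify $\mathrm{Sing}(C_\tau)$ with $K$, to prove uniqueness, and to deal with the saturation claim. In characteristic $0$, Euler's identity gives $\mathrm{Sing}(C_\tau)=V\bigl(\partial F/\partial x_{000},\dots,\partial F/\partial x_{111}\bigr)=V(I(K)_{3})$, so everything reduces to showing that the base locus of the linear system of cubics through $K$ is exactly $K$. The inclusion $\supseteq$ is immediate; for $\subseteq$ I would exploit the $H$-action to translate $K$ by the $64$ two-torsion points, reducing to a neighbourhood of a single point of $K$, where the $C_{ijk}$ can be written out via the addition theorem \eqref{eq:addition} and shown to cut $K$ out locally --- this is the step that breaks down in the hyperelliptic case and where the hypothesis that $C$ be a smooth plane quartic is used. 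For uniqueness, any quartic $G$ singular along $K$ has all its partials in $I(K)_{3}\cong W$, hence lies in $W\cdot I(K)_{3}$; comparing $H$-isotypic components of $\mathrm{Sym}^{4}W$ with those of $W^{\vee}\otimes I(K)_{3}\cong\mathrm{End}(W)$ leaves only the trivial character, forcing $G\in\C\cdot F$ (alternatively, nondegeneracy of the Hessian of $F$; cf.\ \cite{beauville}). Finally, the ideal $(C_{000},\dots,C_{111})$ agrees with $I(K)$ through degree $3$ and differs in degree $4$ only by the six further quartic generators of $I(K)$ recorded above; a short computation shows those quartics lie in $(C_{000},\dots,C_{111})\!:\!\mathfrak{m}^{\infty}$, so the eight cubics generate $I(K)$ up to saturation.

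The main obstacle is the reverse inclusion $\mathrm{Sing}(C_\tau)\subseteq K$, i.e.\ that the eight cubics have no spurious base points: the dimension count and the Heisenberg symmetry produce the cubics, the quartic $F$, and the integrability essentially for free, but pinning down their common zero locus is a genuinely geometric step, and it is precisely there that the non-hyperellipticity of $C$ is indispensable.
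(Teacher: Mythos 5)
The paper offers no proof of this proposition; it is quoted from Beauville, Coble, and Dolgachev--Ortland, so your representation-theoretic argument is necessarily a new route, and the skeleton of it is sound. The identification $I(K)_n=\ker\bigl(\mathrm{Sym}^nW\to \mathrm{H}^0(A_\tau,2n\Theta)^+\bigr)$ via projective normality, the Lefschetz count $h^0(2n\Theta)^+=4n^3+4$, the conclusion $I(K)_3\cong W$ as an $H$-module, and the vanishing of the curl because $\mathrm{Sym}^2W$ and $\wedge^2W$ carry disjoint sets of characters of $H/\F_2$ (the $36$ even versus the $28$ odd ones) are all correct and give a clean construction of $F$.

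There are, however, two genuine gaps beyond the reverse inclusion $\mathrm{Sing}(C_\tau)\subseteq K$ that you already flag. First, the uniqueness argument ``comparing $H$-isotypic components \dots leaves only the trivial character'' does not close as written. Your curl computation only shows that the \emph{trivial}-isotypic piece of $W^\vee\otimes I(K)_3\cong W^\vee\otimes W$ is killed by the curl; a hypothetical quartic $G$ singular along $K$ but transforming under a nontrivial character $\chi$ of $H/\F_2$ would give a curl-free vector in one of the other $63$ one-dimensional eigenspaces, and nothing in your argument rules this out. For $\chi\ne0$ the target $\bigl(\wedge^2W^\vee\otimes\mathrm{Sym}^2W\bigr)_\chi$ is typically nonzero, so ``the $\mathrm{Hom}$-space vanishes'' is no longer available; one would need either an explicit injectivity check of the curl on each nontrivial eigenspace, or the Hessian non-degeneracy argument you mention from Beauville, and that needs to be actually carried out rather than referenced in parentheses. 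Second, the final claim that the eight cubics generate $I(K)$ up to saturation rests on (i) $I(K)$ being generated in degrees $\le4$ with exactly six extra quartics, and (ii) those quartics lying in $(C_{000},\dots,C_{111}):\mathfrak{m}^\infty$; item (i) is precisely the Betti-table information the paper obtains by a Macaulay2 computation, and item (ii) is asserted, not shown. Both (i) and (ii), like the reverse inclusion you flag, are exactly the places where the non-hyperelliptic geometry enters, and deferring all of them leaves the statement proved only up to the portions that make it nontrivial.
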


The quartic $C_\tau$ is invariant under the action of the Heisenberg group $H$. 
The space of $H$-invariant quartics is $15$-dimensional, and
the defining polynomial $F_\tau$ of the Coble quartic hypersurface can be written as a 
linear combination of a basis for this space of $H$-invariants.
Using the Schr\"odinger coordinates ${\bf x}$ on $\PP^7$, we write 
\begin{equation}
\label{eq:coblequartic}
 \begin{matrix}
F_\tau \quad = \quad r  \cdot (x_{000}^4+x_{001}^4+x_{010}^4+x_{011}^4+x_{100}^4+x_{101}^4+x_{110}^4+x_{111}^4)
\quad \\
 + \, s_{001} \cdot (x_{000}^2 x_{001}^2+x_{010}^2 x_{011}^2+x_{100}^2 x_{101}^2+x_{110}^2 x_{111}^2) \\
 + \, s_{010} \cdot (x_{000}^2 x_{010}^2+x_{001}^2 x_{011}^2+x_{100}^2 x_{110}^2+x_{101}^2 x_{111}^2) \\
 + \, s_{011} \cdot (x_{000}^2 x_{011}^2+x_{001}^2 x_{010}^2+x_{100}^2 x_{111}^2+x_{101}^2 x_{110}^2) \\
 + \, s_{100} \cdot (x_{000}^2 x_{100}^2+x_{001}^2 x_{101}^2+x_{010}^2 x_{110}^2+x_{011}^2 x_{111}^2) \\
 + \, s_{101} \cdot (x_{000}^2 x_{101}^2+x_{001}^2 x_{100}^2+x_{010}^2 x_{111}^2+x_{011}^2 x_{110}^2) \\
 + \, s_{110} \cdot (x_{000}^2 x_{110}^2+x_{001}^2 x_{111}^2+x_{010}^2 x_{100}^2+x_{011}^2 x_{101}^2) \\
 + \, s_{111} \cdot (x_{000}^2 x_{111}^2+x_{001}^2 x_{110}^2+x_{010}^2 x_{101}^2+x_{011}^2 x_{100}^2) \\
 + \, t_{001} \cdot (x_{000} x_{010} x_{100} x_{110} + x_{001} x_{011} x_{101} x_{111}) \\
 + \, t_{010} \cdot (x_{000} x_{001} x_{100} x_{101} +x_{010} x_{011} x_{110} x_{111}) \\
 + \, t_{011} \cdot (x_{000} x_{011} x_{100}  x_{111}+x_{001} x_{010} x_{101}  x_{110}) \\
 + \, t_{100} \cdot (x_{000} x_{001} x_{010} x_{011}+x_{100} x_{101}x_{110} x_{111}) \\
 + \, t_{101} \cdot (x_{000} x_{010} x_{101} x_{111}+x_{001} x_{011} x_{100} x_{110}) \\
 + \, t_{110} \cdot (x_{000} x_{001} x_{110} x_{111}+x_{010} x_{011} x_{100} x_{101}) \\
 + \, t_{111} \cdot (x_{000} x_{011} x_{101} x_{110}+x_{001} x_{010} x_{100} x_{111}). \\
 \end{matrix}
\end{equation}
This representation
appears in \cite[\S IX.5, Proposition 8]{DO} and after Theorem 3.2 in \cite[Section 3]{Beau}.
The $15$ coefficients $r,s_\bullet, t_\bullet$ are parameters.
This notation is used throughout this paper.

\begin{remark} \rm
 The  monomials in ${\bf x}$ that appear in the equation (\ref{eq:coblequartic}) of the Coble quartic can be understood
combinatorially via the affine geometry of the 
eight-point vector space $(\mathbb{F}_2)^3$.
Namely, the four terms multiplied by
$s_{ijk}$ are the four affine lines parallel to the vector $(i,j,k)$, and
the two terms multiplied by $t_{ijk}$
are the two affine planes perpendicular
to the vector $(i,j,k)$, with respect to the 
usual dot product in $(\mathbb{F}_2)^3$. \qed
\end{remark}

In this paper, each non-hyperelliptic Kummer threefold will be represented as the variety in $\PP^7$ cut out by the eight partial derivatives $\partial F_\tau/\partial x_{ijk}$. The adjective ``universal'' in our title means that we
are working over the six-dimensional base of all Coble polynomials $F_\tau$. This family is obtained by letting $\tau$ run over the Siegel upper halfspace $\mathfrak{H}_3$.
In particular, $r, s_\bullet, t_\bullet$ depend analytically on $\tau$.
We shall review the relevant moduli space in  Section \ref{sec:satakehyper}

\smallskip

The Coble quartic is closely related to the Kummer surface
in Example \ref{ex:Kummsur}.  Indeed, the Kummer quartic is the expansion of
the determinant  (\ref{universalk2}) along the first row:
\begin{equation}
\label{eq:kumquar2}
\begin{matrix}
r \cdot (x_{00}^4 {+} x_{01}^4 {+} x_{10}^4 {+} x_{11}^4) \,+\, t \cdot ( x_{00} x_{01} x_{10} x_{11} ) \,\,+ \\ 
 s_{01} \cdot ( x_{00}^2 x_{01}^2 {+} x_{10}^2 x_{11}^2) + 
 s_{10}  \cdot ( x_{00}^2 x_{10}^2 {+} x_{01}^2 x_{11}^2) + 
 s_{11} \cdot ( x_{00}^2 x_{11}^2 {+} x_{01}^2 x_{10}^2 ).  \\
\end{matrix}
\end{equation}
The monomials in $x_{00},x_{01},x_{10},x_{11}$ seen here
 correspond to the  affine subspaces of $(\mathbb{F}_2)^2$.
The coefficients are polynomials of degree $12$ in the  theta constants $u_{ij}$.
They are obtained as the $4 \times 4$-minors of the
last four rows in (\ref{universalk2}). These minors satisfy the cubic equation
\begin{equation}
\label{eq:cubicrel}
16 r^3+rt^2+4 ( s_{01} s_{10} s_{11}- r s_{01}^2
- r s_{10}^2- r s_{11}^2) \,\, = \,\, 0. \end{equation}
This cubic defines a hypersurface in $\PP^4$ which is known 
as {\em Segre's primal cubic}.
In Section~\ref{sec:gopeleqns} we shall derive the analogous relations for the fifteen coefficients
of (\ref{eq:coblequartic}).

\begin{remark} \rm
Both the Kummer surface in $\PP^3$ and the Coble quartic in $\PP^7$ are self-dual hypersurfaces \cite{Pau}. The role of the $16$ singular points on the Kummer surface
is now played by the $64$ singular points on the Kummer threefold.
These are the images of the $2$-torsion points of $A_\tau$ under the Kummer map
$\kappa_\tau$. In analogy to the $16$ entries of the matrix (\ref{eq:UX}),
we consider the $64$ linear forms 
in (\ref{thetasquares}) below. Their coefficients
give the $64$ special singular points of $\kappa_\tau(A_\tau)$ in $\PP^7$.
These  points lie on $64$ special hyperplanes.
Self-duality gives a $64_{28}$ configuration consisting of
points and hyperplanes in $\mathbb{P}^7$.  If $A_\tau$ is the Jacobian of a genus three hyperelliptic curve,
then the Coble quartic $F_\tau$ becomes the square of a quadric,
as we shall see in (\ref{eq:squareofquadric}), and the Kummer threefold fails to be projectively normal \cite[\S 2.9.3]{khaled}. \qed
\end{remark}
 
 \section{The Satake hypersurface} \label{sec:satakehyper}

In order to understand the family of all Kummer threefolds,
we now vary the matrix $\tau$ throughout the Siegel upper halfspace
$\mathfrak{H}_3$. The {\it modular group} $\text{Sp}_{6}(\mathbb{Z})$ consists of block matrices $\gamma=\begin{pmatrix}
a&b\\
c&d
\end{pmatrix}$ where $a,b,c,d$ are $3\times 3$ matrices with integer entries such that $\gamma J\gamma^t=J$, where $J=\begin{pmatrix}
0& \! -{\rm Id}_3\\
{\rm Id}_3& \! 0
\end{pmatrix} $.  
Following \cite{BL, DO, Mum}, 
the modular group $\text{Sp}_{6}(\mathbb{Z})$ acts on $\mathfrak{H}_3$ by
$$
\gamma\circ\tau\,\,=\,\,(\tau c+d)^{-1}(\tau a+b).
$$
The quotient is the moduli space of principally polarized abelian threefolds:
\[
\mathcal{A}_3\,\,=\,\,\mathfrak{H}_3/\text{Sp}_{6}(\mathbb{Z}).
\]
 
We will also consider certain {\it level covers} of $\mathcal{A}_3$, which can be constructed by taking quotients of $\mathfrak{H}_3$ by appropriate normal congruence subgroups of $\text{Sp}_{6}(\mathbb{Z})$.  The subgroup
\begin{equation}
\label{eqn:gamma2}
\Gamma_3(2)\,\,=\,\,
\left\{\,\gamma
\in \text{Sp}_6(\mathbb{Z}) : \gamma\equiv \begin{pmatrix} {\rm Id}_3 &0\\0& {\rm Id}_3 \end{pmatrix} \pmod 2\right\}
\end{equation}
has index $|\text{Sp}_6(\mathbb{F}_2)| = 
1451520$; see (\ref{eq:1451520}). We also define 
$$
\Gamma_3(2,4)\,\,=\,\,\left\{\gamma=\begin{pmatrix}
a&b\\
c&d
\end{pmatrix} \in \Gamma_3(2)\ : \ \text{diag}(a^Tb)\equiv\text{diag}(c^Td)\equiv0 \mod 4\right\}.
$$
This group has index $64$ in $\Gamma_3(2)$, and the
quotient group $\Gamma_3(2)/\Gamma_3(2,4)$ is isomorphic to $(\mathbb{F}_2)^6$.
These subgroups determine moduli spaces
\[
\mathcal{A}_3(2)  = \mathfrak{H}_3/\Gamma_3(2) 
\quad {\rm and} \quad  \mathcal{A}_3(2,4) = \mathfrak{H}_3/\Gamma_3(2,4).
\]
We record that the induced quotient map of moduli spaces is a 64-to-1 cover:
\begin{equation}
\label{eq:64to1}
\mathcal{A}_3(2,4)  \xrightarrow{64:1} \mathcal{A}_3(2).
\end{equation}

The Torelli map gives an embedding  of the moduli space $\mathcal{M}_3$
 of smooth genus three~curves into~$\mathcal{A}_3$. We can thus regard
 $\mathcal{M}_3$ as a subset of $\mathcal{A}_3$.  
(For the experts: this would not be correct if we considered these spaces
as stacks, but we will not do this here.)
The inverse images of $\mathcal{M}_3$
in  $\mathcal{A}_3(2)$ and in $\mathcal{A}_3(2,4)$
are denoted by  $\mathcal{M}_3(2)$ and  $\mathcal{M}_3(2,4)$ respectively.

 The moduli spaces above are six-dimensional quasi-projective varieties, and
we are interested in the homogeneous prime ideals of certain embeddings.
In this section we focus on $\mathcal{A}_3(2,4)$.
Our point of departure is the {\em theta constant map}  $\vartheta\colon \mathfrak{H}_3\rightarrow\mathbb{P}^7$,
which is defined~by
\begin{equation}
\label{thetaconstants}
\vartheta \colon \tau \,\, \mapsto  \,\,
\bigl(
 \Theta_2[000](\tau;0)  :
 \Theta_2[001](\tau;0) : \cdots : 
  \Theta_2[111](\tau;0) \bigr).
\end{equation}
This map is not injective, since the second order theta constants
\begin{equation}
\label{eq:2ndorder}
\Theta_2[\sigma](\tau;0)
  \,\,\, = \,\,\, \sum_{n\in \Z^3}\exp\left[  
2\pi i \bigl(n+\frac{\sigma}{2} \bigr)^t \tau \bigr( n+\frac{\sigma}{2})\,
\right]
\end{equation}
 are modular forms of weight $1/2$ with respect to the   subgroup $\Gamma_3(2,4)$ 
 of  $\text{Sp}_6(\mathbb{Z})$.  Hence the theta constant map $\vartheta$ can be regarded as
 a morphism from the level cover $\mathcal{A}_3(2,4)=\mathfrak{H}_3/\Gamma_3(2,4)$ 
 into the projective space $\PP^7$.
This map is an embedding by \cite[Theorem 4.1]{glass}.

\smallskip

The closure $\mathcal{S} = \overline{\vartheta(\mathcal{A}_3(2,4))}$ of the image of the theta map in $\PP^7$ is a six-dimensional hypersurface. It is isomorphic to the Satake  compactification $\overline{\mathcal{A}_3(2,4)}$
 of the moduli space ${\mathcal{A}_3(2,4)}$.  This is a degree 16 hypersurface which we call the {\em Satake hypersurface}.
Its defining polynomial, which we also denote by $\mathcal{S}$, was found by Runge \cite[\S 6]{runge} and we now describe it. Write $G$ for the
 permutation group of order $1344$ that is generated by the four transpositions
$$ (u_{001} u_{010})(u_{101} u_{110}) ,\,
  (u_{010} u_{100})(u_{011} u_{101}),\,
 (u_{000} u_{001})( u_{010} u_{011}) , \,{\rm and} \,\,
 (u_{100} u_{101}) (u_{110}  u_{111}). $$
As an abstract group,  $\,G=\text{SL}_3(\mathbb{F}_2) \rtimes\left(\mathbb{F}_2\right)^3$.
In our context, it is precisely the subgroup of
$  \text{Sp}_6(\mathbb{F}_2)$ that acts on the Schr\"odinger variables ${\bf u}$ by coordinate permutations. Let $[\, {\bf u}^* ]_m$ denote the sum over the
$G$-orbit of a monomial ${\bf u}^*$. The index $m$ is the size of this orbit.

\begin{proposition}[Runge] \label{lem:GG}
The Satake hypersurface $\mathcal{S} = \overline{\vartheta(\mathcal{A}_3(2,4))}$
is an irreducible hypersurface of degree $16$ in $\PP^7$. Its defining polynomial is the following sum  of $471$ monomials:
$$ \begin{matrix}
  [u_{000}^4 u_{001}^4 u_{010}^4 u_{100}^4]_{56} \,
 - \, 2 [u_{000}^9 u_{001} u_{010} u_{100} u_{011} u_{101} u_{110} u_{111} ]_8   
 + \, 2 [u_{000}^4 u_{001}^4 u_{010}^2 u_{100}^2 u_{011}^2 u_{101}^2 ]_{84} \\
\! + [u_{000}^8 u_{001}^2 u_{010}^2 u_{100}^2 u_{111}^2]_{56} 
 -  [u_{000}^6 u_{001}^4 u_{010}^2 u_{100}^2 u_{110}^2]_{224} 
 + 4 [u_{000}^5 u_{001}^5 u_{010} u_{100} u_{011} u_{101} u_{110} u_{111} ]_{28} \\
 -\, 16  \,[u_{000}^3 u_{001}^3 u_{010}^3 u_{100} u_{011}^3 u_{101} u_{110} u_{111} ]_{14}
\,\,+\,\, 72 \,u_{000}^2 u_{001}^2 u_{010}^2 u_{100}^2 u_{011}^2 u_{101}^2 u_{110}^2 u_{111}^2.
\end{matrix}
$$
\end{proposition}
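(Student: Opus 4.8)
The plan is to combine a quick dimension count for the structural claims with an equivariant-plus-computational determination of the defining polynomial. Since $\mathfrak{H}_3$ is a connected complex manifold of dimension $6$, the quotient $\mathcal{A}_3(2,4) = \mathfrak{H}_3/\Gamma_3(2,4)$ is irreducible of dimension $6$; by \cite[Theorem 4.1]{glass} the theta constant map $\vartheta$ embeds it into $\PP^7$, so its Zariski closure $\mathcal{S}$ is an irreducible $6$-dimensional subvariety of $\PP^7$, i.e. a hypersurface, and its homogeneous ideal $I(\mathcal{S})$ is principal, generated by an irreducible polynomial $F$. It remains to prove $\deg F = 16$ and to identify $F$ explicitly.

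For the degree, I would analyze the Hilbert function of the graded ring $R = \C[\,\Theta_2[\sigma](\tau;0) : \sigma \in \F_2^3\,]$, the image of $\C[u_{000},\dots,u_{111}]$ under $u_\sigma \mapsto \Theta_2[\sigma](\tau;0)$ inside the ring of Siegel modular forms of half-integral weight for $\Gamma_3(2,4)$; this is exactly the homogeneous coordinate ring of $\mathcal{S}$. Using either the classical dimension formulas for such spaces of modular forms, or the constant-argument form of the inversion identity (\ref{eq:additioninv}), which expresses the pairwise products $\Theta_2[\sigma]\Theta_2[\sigma']$ as linear combinations of squares of theta constants with characteristics and thereby reduces the question to known facts about theta nulls in genus $3$, one shows that the multiplication map $\mathrm{Sym}^d(\C^8) \to R_d$ is injective for $d \le 15$ and has a one-dimensional kernel for $d = 16$. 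Hence $\deg F = 16$ and the degree-$16$ part of $I(\mathcal{S})$ is the line spanned by $F$.

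To pin down $F$, I would use that the tower of level covers carries an action of $\mathrm{Sp}_6(\Z)/\Gamma_3(2,4)$, an extension of $\mathrm{Sp}_6(\F_2)$ by $(\F_2)^6$, that $\mathcal{S}$ is invariant, and hence that $F$ is a semi-invariant; in particular it is fixed, up to a character, by the order-$1344$ group $G$ of coordinate permutations and by the Heisenberg sign changes. Since $G \cong \mathrm{AGL}_3(\F_2)$ has trivial abelianization it has no nontrivial characters, so $F$ is genuinely $G$-invariant, and it lies in a single eigenspace for the Heisenberg sign changes; this forces $F$ to be a linear combination of the eight $G$-orbit sums $[\, {\bf u}^* ]_m$ appearing in the statement, reducing the problem to eight rational coefficients. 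These I would obtain either (i) numerically, by evaluating the eight second-order theta constants to high precision at a few generic $\tau \in \mathfrak{H}_3$ with the {\tt Sage} implementation of \cite{SD}, imposing that the orbit sums vanish on $\vartheta(\tau)$, solving the resulting linear system, and recognizing the displayed integers; or (ii) arithmetically, by restricting $F$ to the Satake boundary strata, where the period matrix degenerates and $\Theta_2^{(3)}$ factors as a product of lower-genus second-order theta constants, and exploiting that those satisfy no relation in genus $\le 2$ together with the genus-one identities to fix all coefficient ratios and a normalization. One then checks, via the functional equations (\ref{fcteqn})--(\ref{eq:additioninv}) or to high numerical precision at many points, that the resulting polynomial vanishes identically on $\vartheta(\mathfrak{H}_3)$; being of degree $16$ in the principal ideal $I(\mathcal{S})$, whose generator is irreducible of degree $16$, it is a scalar multiple of that generator, so irreducibility is automatic.

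The main obstacle is the degree bound, i.e. showing that no relation occurs before degree $16$ and that the space of relations in degree $16$ is exactly one-dimensional; equivalently, one must control the Hilbert function of the ring generated by second-order theta constants through weight $8$. This needs either precise dimension formulas for half-integral-weight Siegel modular forms on $\Gamma_3(2,4)$ or a direct argument that the $\binom{d+7}{7}$ monomials of degree $d \le 15$ map to linearly independent modular forms (for instance via the generic rank of the corresponding evaluation matrices). By contrast, once the shape of $F$ is fixed by symmetry, determining the eight coefficients is routine.
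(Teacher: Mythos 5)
The paper itself does not prove this proposition; it cites Runge~\cite{runge} and van Geemen--van der Geer~\cite{GG}, and its Remark~3.3 presupposes knowledge of the $471$-monomial support when it describes the linear-algebra route. So yours is an independent attempt. Your structural framework is sound: irreducibility and the fact that $\mathcal{S}$ is a hypersurface follow from the irreducibility of $\mathcal{A}_3(2,4)$ together with $\vartheta$ being an embedding; your invariance argument (trivial abelianization of $G \cong \mathrm{AGL}_3(\F_2)$ giving genuine $G$-invariance, plus lying in a single Heisenberg eigenspace) is correct; and you rightly flag that the degree-$16$ claim, via the Hilbert function of the ring generated by second-order theta constants through weight~$8$, is the part that needs real work.

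There is, however, a genuine gap at the step where you say that the symmetry constraints \emph{force} $F$ to be a linear combination of the eight $G$-orbit sums appearing in the statement. The invariance argument only places $F$ in the span of \emph{all} $G$-orbit sums of degree-$16$ monomials lying in the trivial Heisenberg eigenspace, and that span is far larger than eight-dimensional. For instance, the $G$-orbits of $u_{000}^{16}$, of $u_{000}^{14}u_{001}^{2}$, of $u_{000}^{12}u_{001}^{4}$, and many others all satisfy $\sum_\sigma e_\sigma\,\sigma \equiv 0 \pmod 2$ and are fixed by $G$, so they are admissible summands a priori; nothing in your argument explains why their coefficients vanish. To repair this you must either (a) enumerate the full list of eligible $G$-orbit sums in degree $16$ and solve for all of their coefficients by the numerical linear algebra you describe, observing after the fact that the solution is supported on only these eight orbits (this is essentially what Remark~3.3 of the paper hides by assuming the $471$-monomial support is known), or (b) supply an additional a priori constraint on the support --- for example a careful bookkeeping of the Fourier--Jacobi degeneration~(\ref{eq:fourier}), which must send $\mathcal{S}$ to the genus-$2$ determinant as leading form, thereby killing orbits like that of $u_{000}^{16}$ whose leading $q$-order would be too small. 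As written, the reduction to ``eight rational coefficients'' is asserted, not proved, and a minor further slip is that the numerical step should impose vanishing of the unknown linear combination of orbit sums at $\vartheta(\tau)$, not vanishing of the orbit sums themselves.
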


A more compact representation of the Satake hypersurface $\mathcal{S}$, in
terms of second order theta constants, appears in
\cite[Example~1.4]{GG}. Our $471$ monomials can be derived from these.

We next discuss a beautiful direct relationship between the polynomials
in Example \ref{ex:Kummsur} and Proposition \ref{lem:GG}.
We learned this from Sam Grushevsky and Riccardo Salvati Manni.
Both polynomials have total degree $16$ in eight unknowns, and
the former appears as an initial form in the latter.
Let $q$ denote a deformation parameter and~set
\begin{equation}
\label{eq:fourier}
\!\!\!\! \begin{matrix}
u_{000}=u_{00}+O(q^4), &
u_{001}=u_{01}+O(q^4), &
u_{010}=u_{10}+O(q^4), &
u_{011}=u_{11}+O(q^4), \\
u_{100}=2q x_{00}{+}O(q^9), &
\!\! u_{101}=2q x_{01}{+}O(q^9), &
\!\! u_{110}=2q x_{10}{+}O(q^9), &
\!\! u_{111}=2q x_{11}{+}O(q^9).
\end{matrix}
\end{equation}
Under this substitution,
the Satake polynomial in Proposition \ref{lem:GG} takes the form
$$ \mathcal{S} \,\,\, = \,\,\,{\rm det}[ \bullet ] \cdot q^4 \,+\, O(q^8) , $$
where $[\bullet]$ is the $5 {\times} 5$-matrix of 
Example \ref{ex:Kummsur} that defines the
universal Kummer surface $\mathcal{K}_2$.

This identity can be derived from the
{\em Fourier--Jacobi expansion}  of second-order theta constants.
Namely, in that expansion we write the $3 \times 3$-matrix $\tau \in \mathfrak{H}_3$ in the form
$$
\qquad \tau= \begin{pmatrix}
s & 2z^t\\
2z&\tau'
\end{pmatrix},
\quad \hbox{
where $\tau' \in \mathfrak{H}_2$, $s \in \C$, and $z \in \C^2$.}
$$
If we set $q=e^{\pi is/2}$, then each of the eight genus three second order theta constants
$$
u_{\sigma}\,=\,\Theta_2(\tau;0)[\sigma]
$$ 
has a Taylor series expansion in $q$. The leading coefficient in these Taylor series is either a
theta constant or a theta function of genus two. These expansions are given in~(\ref{eq:fourier}).

\smallskip

The Satake hypersurface $\mathcal{S} = \overline{\vartheta(\mathcal{A}_3(2,4))}$ 
in $\PP^7$ contains several loci of geometric~interest:
\begin{compactitem}
\item The  {\em hyperelliptic locus} has codimension one in $\mathcal{S}$,
and hence codimension two in $\PP^7$.
\item The {\em Torelli boundary}
$\mathcal{S} \setminus \vartheta(\mathcal{M}_3(2,4))$
has codimension two in $\mathcal{S}$.
\item 
The  {\em Satake boundary} $\mathcal{S}
\setminus \vartheta(\mathcal{A}_3(2,4))$
has codimension three in $\mathcal{S}$.
\end{compactitem}
For each of these loci, we shall describe its irreducible components and  defining polynomials.
We begin with the Satake boundary. 
 By \cite[Lemma 3.5]{vangeemen}, 
 it consists of $126 $ three-dimensional subspaces $\PP^3$ in $\PP^7$:
each of the $63$ non-zero half-periods 
$\epsilon \in  \Lambda/2 \Lambda$ induces a linear involution on $\mathbb{P}^7$
via the action on second order theta constants
given in (\ref{eq:involution}), and
the fixed point set  of this involution on $\mathbb{P}^7$ is the union of two  $\PP^3$s.
For instance, for $\epsilon = (1/2,0,0)$, the involution (\ref{eq:involution}) 
fixes the coordinates 
$u_{000}, u_{001}, u_{010}, u_{011}$, it switches the sign on
$u_{100}, u_{101}, u_{110}, u_{111}$, and the two  $\PP^3$s
are obtained by setting either of these two groups of four variables to zero.

The hyperelliptic locus in $\mathcal{S}$ is the closure of the set of all points $\vartheta(\tau)$
where $A_\tau $ is the Jacobian of a smooth hyperelliptic curve of genus 3.
It is known (see e.g.~\cite{glass}) that a genus 3 curve is hyperelliptic if and only if one of its 
$36$ first order theta constants  $\theta[\epsilon|\epsilon'](\tau; 0)$ vanishes.
We write these $36$ divisors in our eight coordinates $u_{000}, \ldots, u_{111}$ using the formula
\begin{equation}
\label{thetasquares2}
\theta[\epsilon|\epsilon'](\tau;0)^2\,\,\,=\,\,\,
\sum_{\sigma\in\mathbb{F}_2^3}(-1)^{\sigma\cdot\epsilon'}
\cdot u_\sigma \cdot u_{\sigma+\epsilon},
\end{equation}
which is obtained by setting $w=z=0$ in 
the addition theorem (\ref{eq:addition}).
Hence the hyperelliptic locus in $\mathcal{S}$ can be defined set-theoretically by the equation of degree $72$ obtained by taking the product of the quadrics (\ref{thetasquares2}) where $(\epsilon, \epsilon')$ runs over the $36$ even theta characteristics. 

We will see in Section~\ref{sec:univcoble} that the product $\prod\theta[\epsilon|\epsilon'](\tau;0)$ of the
$ 36$ first order theta constants is in fact a polynomial in  ${\bf u}$, so the scheme defined by the degree
$ 72$ polynomial is not reduced.  Modulo each of the quadrics (\ref{thetasquares2}), the Satake polynomial becomes the square of an octic. In the supplementary materials, we give this octic for one of the $36$ components, as well as a verification that the subscheme of this component 
defined by the octic is reduced. So each component of the hyperelliptic locus is a complete intersection of degree $16$.   Hence as a subvariety of $\PP^7$, the hyperelliptic locus has codimension two, degree $576=16\cdot36$, and $36$ irreducible components. Glass analyzed in \cite[Theorem 3.1]{glass}
how many of these
$36$ theta constants can simultaneously vanish. We shall refine his results  in 
our Table~\ref{fig:glass}.

\smallskip

The Torelli boundary is the closure of the image under $\vartheta$ of  the set of all polarized abelian threefolds that decompose as the product, as a polarized variety, of an elliptic curve and an abelian surface. 
This means that $\tau$ can be transformed into  block form under $\mathrm{Sp}_6(\mathbb{Z})$.

 \begin{proposition} 
 \label{prop:torellibdr} The Torelli boundary coincides with the
 singular locus of the Satake hypersurface
 $\mathcal{S}$.  It is the union of
 $336$ irreducible four-dimensional subvarieties of $\PP^7$,
 each of which is defined by the $2 \times 2$-minors
 of a $2 \times 4$-matrix of linear forms in $u_{000}, u_{001},\ldots, u_{111}$.
 \end{proposition}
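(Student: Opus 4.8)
The plan is to prove the three assertions---that the Torelli boundary equals $\mathrm{Sing}(\mathcal{S})$, that it has $336$ irreducible components, and that each component is the determinantal variety of a $2\times 4$ matrix of linear forms---by combining the combinatorics of the Fourier--Jacobi expansion with a direct singularity computation on the explicit polynomial of Proposition \ref{lem:GG}. First I would set up the geometric meaning: a point $\vartheta(\tau)$ lies in the Torelli boundary precisely when $\tau$ is $\mathrm{Sp}_6(\mathbb{Z})$-equivalent to a block matrix $\mathrm{diag}(\tau_1,\tau')$ with $\tau_1\in\mathfrak{H}_1$ and $\tau'\in\mathfrak{H}_2$. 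Using the Fourier--Jacobi expansion recalled after Proposition \ref{lem:GG} (with the substitution (\ref{eq:fourier}) adapted so that the deformation parameter $q$ literally vanishes in the decomposable limit), the eight second-order theta constants $u_\sigma$ of the product threefold factor as products $u_{(\sigma_1,\sigma')} = v_{\sigma_1}\cdot w_{\sigma'}$, where $v\in\mathbb{C}^2$ are genus-one theta constants and $w\in\mathbb{C}^4$ are genus-two theta constants. This factorization says exactly that the $2\times 4$ matrix $M=(u_{(i,\sigma')})_{i\in\mathbb{F}_2,\ \sigma'\in\mathbb{F}_2^2}$ has rank $\le 1$, i.e.\ its $2\times 2$-minors vanish; this produces one determinantal variety, and the remaining $335$ arise by applying the group $\mathrm{Sp}_6(\mathbb{F}_2)$ (acting on $\PP^7$ through the Schr\"odinger representation) to permute which $\mathbb{F}_2\oplus\mathbb{F}_2^2$ splitting of $(\mathbb{F}_2)^3$ is used. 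I would count the orbit: the number of ways to realize a genus-$(1,2)$ decomposition is the index of the relevant parabolic, and one checks $|\mathrm{Sp}_6(\mathbb{F}_2)|$ divided by the stabilizer order gives $336$ (equivalently, $336 = 63\cdot 2^5/\,6$, or directly via the count of ``syzygetic'' splittings); each such determinantal variety is irreducible of dimension $4$ (generic determinantal variety of $2\times 4$ matrices of linear forms in an $8$-dimensional space, codimension $3$).

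Next I would prove $\mathrm{Sing}(\mathcal{S})$ equals this union. The inclusion ``$\supseteq$'' is a local computation: on one representative determinantal stratum, say $\{$rank $M\le 1\}$ with $M$ as above, I would restrict the Runge polynomial of Proposition \ref{lem:GG} and show all $\partial\mathcal{S}/\partial u_\sigma$ vanish there---this is a finite symbolic check (done in the supplementary {\tt Macaulay 2} files), and it extends to all $336$ strata by $\mathrm{Sp}_6(\mathbb{F}_2)$-equivariance of $\mathcal{S}$. For ``$\subseteq$'', I would compute the singular subscheme of $\mathcal{S}$ directly: form the ideal $\big(\mathcal{S},\ \partial\mathcal{S}/\partial u_{000},\dots,\partial\mathcal{S}/\partial u_{111}\big)$, compute a primary decomposition (or at least the radical and its minimal primes), and verify that the irreducible components are exactly the $336$ determinantal varieties already identified, each appearing with the correct dimension. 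Because $\mathcal{S}$ is a hypersurface, $\mathrm{Sing}(\mathcal{S})$ automatically has codimension $\ge 2$ in $\PP^7$ (as $\mathcal{S}$ is reduced and irreducible by Proposition \ref{lem:GG}), so the determinantal components, being codimension $3$, are consistent; one must still rule out extra components, which the Gr\"obner computation settles.

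The main obstacle is the singular-locus computation: the Runge polynomial has $471$ terms of degree $16$ in $8$ variables, so computing a full primary decomposition of the Jacobian ideal over $\mathbb{Q}$ is delicate and potentially slow. I would manage this by working first over a finite field such as $\Z/101$ to get the component count and dimensions quickly, then confirming the identification of each component with an explicit determinantal ideal by direct containment checks (membership of the $2\times 2$-minors in the saturated Jacobian ideal, and vice versa) rather than a blind decomposition; equivariance under $G$ and under $\mathrm{Sp}_6(\mathbb{F}_2)$ reduces the work to a single orbit representative. A secondary subtlety is making the Fourier--Jacobi factorization rigorous at the level of the Zariski closure---one must argue that the decomposable locus is the closure of the image under $\vartheta$ of block-diagonal $\tau$, and that its closure in $\PP^7$ is precisely the rank-$\le 1$ locus (not something smaller), which follows because the map $(v,w)\mapsto (v_i w_{\sigma'})$ surjects onto the affine cone over the Segre $\PP^1\times\PP^3$ and theta constants of genus $1$ and $2$ dominate their respective projective spaces.
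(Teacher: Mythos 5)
Your proposal captures the correct structure: the Segre varieties $\PP^1\times\PP^3$ defined by the $2\times2$-minors of a $2\times4$-matrix of theta constants, the orbit count of $336$ under $\mathrm{Sp}_6(\F_2)$, and the verification by a symbolic Jacobian check that each such variety sits in $\mathrm{Sing}(\mathcal{S})$. Where you diverge from the paper, and where you take on substantial extra risk, is the inclusion $\mathrm{Sing}(\mathcal{S})\subseteq$ (Torelli boundary). You propose to compute the singular subscheme of $\mathcal{S}$ directly---form the Jacobian ideal of the $471$-term degree-$16$ polynomial, find its minimal primes, and rule out extra components. You correctly flag this as the main obstacle, but the mitigation you suggest (finite fields, then containment checks in a single orbit) still presupposes you can find \emph{all} components, which is exactly what is hard. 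The paper sidesteps this entirely by invoking \cite[Lemma 3.2]{GG}, which says $\mathcal{S}$ is smooth at every point representing an indecomposable abelian threefold; this instantly gives $\mathrm{Sing}(\mathcal{S})\subseteq$ (decomposable locus $\cup$ Satake boundary), and the latter is swallowed by the Torelli boundary closure. So the only computation the paper actually performs is the easy direction, the $\supseteq$ containment, by checking that the $2\times2$-minors of \eqref{eq:2x4matrix} kill the partial derivatives of $\mathcal{S}$. If you retain your route, you should be explicit that you are re-deriving, by Gr\"obner basis methods, a fact that is available as a cited geometric lemma, and be prepared for the decomposition to be infeasible in practice.

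Two smaller remarks. First, your identification of the $336$ components is phrased as an orbit--stabilizer calculation for splittings $(\F_2)^3=\F_2\oplus\F_2^2$; the paper instead derives $336$ from \cite[Theorem 3.1]{glass} together with the correspondence (via \cite{manivel}) to azygetic triads of Steiner complexes, equivalently non-isotropic planes in $(\F_2)^6$, equivalently $\mathrm{A}_2$-subsystems of $\mathrm{E}_7$. Both give $336$, but the paper's version also explains the ``at least six theta constants vanish'' criterion used in the argument, so it is worth adopting. Second, the factorization $u_{(\sigma_1,\sigma')}=v_{\sigma_1}w_{\sigma'}$ for block-diagonal $\tau$ is simply the multiplicativity of theta constants under polarized products; you do not need the Fourier--Jacobi degeneration and should not send $q\to 0$, since that would land you on the Satake boundary rather than a generic point of the Torelli boundary.
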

 
 \begin{proof}
 By \cite[Lemma 3.2]{GG}, the Satake hypersurface is non-singular 
 at each point that represents an indecomposable polarized abelian threefold.
It follows from \cite[Theorem 3.1]{glass}
that a polarized abelian threefold decomposes as a product if and only if at least two of its first order theta constants vanish.  Moreover, the vanishing of two first theta constants implies the vanishing of at least six of the quadrics in (\ref{thetasquares2}).
The relevant $6$-tuples $I$ of even theta characteristics 
$m = [\epsilon|\epsilon']$
have the property that, for any three $m_1,m_2,m_3$ in $ I$, the sum $m_1+m_2+m_3$ is an odd characteristic. There are $336$ such $6$-tuples $I$. They correspond to {\em azygetic triads of Steiner complexes}, 
and hence to non-isotropic planes in the symplectic vector space $(\F_2)^6$, 
and hence also to root subsystems $\mathrm{A}_2$ in $\mathrm{E}_7$; see \cite[Prop.~1(2)]{manivel}.

 Now, if $\vartheta(\tau)$ is in the Torelli boundary then the $3 \times 3$-matrix $\tau$ is in the $\text{Sp}_6(\mathbb{Z})$-orbit of a matrix $\tau_0$ that decomposes into two blocks given by matrices in $\mathfrak{H}_1$
 and $\mathfrak{H}_2$. Since theta constants behave multiplicatively
 under this decomposition, the image of the locus of $\tau_0$ admitting such a decomposition is a Segre variety
 $\PP^1 \times  \PP^3$ in $\PP^7$.
   That Segre variety is defined by the $2 \times 2$-minors of
  the   $2 \times 4$-matrix 
  \begin{equation}
  \label{eq:2x4matrix}
  \begin{pmatrix} u_{000} & u_{001} & u_{010} & u_{011} \\
                              u_{100} & u_{101} & u_{110} & u_{111}
   \end{pmatrix}.
   \end{equation}
It is easy to check in {\tt Macaulay~2} that the ideal of $2 {\times} 2$ minors of \eqref{eq:2x4matrix} contains the partial derivatives of the Satake polynomial $\mathcal{S}$. In particular, this component consists of singular points.
Since the other $335$ components $\PP^1 \times \PP^3$ are obtained by applying the action of the modular group, we see that all components are in the singular locus. Putting everything together, we 
conclude that the singular locus of $\mathcal{S}$ coincides with the Torelli boundary.
\end{proof}

The $336$ irreducible components of the Torelli boundary in $\mathcal{S}$
are a direct generalization of the $10$ irreducible factors in (\ref{eq:UU}).
In both cases, the components are defined by certain quadrics in
the ${\bf u}$-coordinates that can be written as $2 \times 2$-determinants
of linear forms. In Section~\ref{sec:univcoble} we shall return to 
the equations of the  Torelli boundary  when we study the
{\em universal Coble quartic} in $\PP^7 \times \PP^7$.
 Let us now define our universal objects in precise terms.

\smallskip

We  combine the Kummer map (\ref{kummermap})
and the theta constant map (\ref{thetaconstants}) by setting
\begin{equation}
\label{univkummer}
\kappa \colon \mathfrak{H}_3 \times \C^3 \rightarrow \PP^7 \times \PP^7 ,\qquad
(\tau,z) \mapsto (\vartheta(\tau), \kappa_\tau(z)) .
\end{equation}
This is the {\em universal Kummer map} in genus $g=3$. The closure of its image
in $\PP^7 \times \PP^7$  is the {\em universal Kummer threefold}.
This irreducible variety of dimension nine is denoted
$$ \mathcal{K}_3 \,\, = \,\,
 \mathcal{K}_3(2,4) \,\,:=\,\, \overline{\kappa(\mathfrak{H}_3 \times \C^3)}. $$
 We fix the coordinates ${\bf u} = (u_{000}: u_{001}: \cdots : u_{111})$ 
on the first copy of $\PP^7$, and we fix the coordinates
${\bf x} = (x_{000}: x_{001}: \cdots : x_{111})$ on the second $\PP^7$.
Thus the $u_{ijk}$ represent the theta constants which can be regarded as coordinates for 
the moduli space $\mathcal{A}_3(2,4)$ of polarized
abelian threefolds with level structure, while the 
$x_{ijk}$ represent the second order theta functions which are coordinates
 for the individual abelian threefolds $A_\tau$ themselves.

The following two formulas relate these coordinates on our
 $\PP^7 \times \PP^7$ to the theta functions with characteristics.
First, by specializing $w=z$  in (\ref{eq:addition}) we obtain the formula
\begin{equation}
\label{doubledarg}
\theta[\epsilon|\epsilon'](\tau;2z) \cdot \theta[\epsilon|\epsilon'](\tau;0)
\,\,\,=\,\,\,
\sum_{\sigma\in\mathbb{F}_2^3}(-1)^{\sigma\cdot\epsilon'} \cdot
x_\sigma \cdot x_{\sigma+\epsilon}.
\end{equation}
Second, the specialization $w=0$ in (\ref{eq:addition}) yields 
\begin{equation}
\label{thetasquares}
\theta[\epsilon|\epsilon'](\tau;z)^2\,\,\,=\,\,\,
\sum_{\sigma\in\mathbb{F}_2^3}(-1)^{\sigma\cdot \epsilon'}
\cdot u_\sigma \cdot x_{\sigma+\epsilon}.
\end{equation}
The identity (\ref{thetasquares2}) arises from either of these by setting $z = 0$.
Our formulas admit $\mathbb{Z}[1/8]$-linear inversions, derived from (\ref{eq:additioninv}),
that express the various quadratic monomials on $\PP^7 \times \PP^7$ in terms
of thetas with characteristics.
 Equation (\ref{doubledarg})
expresses first order theta functions with doubled argument 
$2z$ as quadratic polynomials in the second order theta functions.

It is sometimes advantageous to embed the
Kummer threefold not in $\PP^7$, but in the larger space $\PP^{35}$
whose coordinates are indexed by even pairs $(\epsilon,\epsilon')$.
Likewise, it makes sense to re-embed
the universal Kummer variety $\mathcal{K}_3(2,4)$ from $  \PP^7 \times \PP^7$
into $\mathbb{P}^{35}\times\mathbb{P}^{35}$.
Using the addition theorem for theta functions \eqref{eq:addition}, this can be accomplished in two 
 different ways.   First, we can use the formulas (\ref{thetasquares}) 
 and (\ref{thetasquares2})
  to map $\mathcal{K}_3(2,4)$ into $\mathbb{P}^{35}\times\mathbb{P}^{35}$ with coordinates
\begin{equation}
\label{eq:P35A}
\left(\,\theta[\epsilon|\epsilon'](\tau;0)^2, \,
           \theta[\epsilon|\epsilon'](\tau;z)^2 \, \right).
\end{equation}
Second, we can use the formulas 
(\ref{doubledarg}) and (\ref{thetasquares2})
 to map $\mathcal{K}_3(2,4)$ into $\mathbb{P}^{35}\times\mathbb{P}^{35}$ with coordinates
\begin{equation}
\label{eq:P35B}
\left(\,\theta[\epsilon|\epsilon'](\tau;0)^2,\, \theta[\epsilon|\epsilon'](\tau;2z)\,\right).
\end{equation}
In both cases, the re-embedding is given by a certain Veronese map
$\PP^7 \times \PP^7 \rightarrow \PP^{35} \times \PP^{35}$.

\begin{remark} \rm
Our motivating problem was to determine
the bihomogeneous prime~ideal 
\[
\mathcal{I}_3 \,\, \subset \,\, \mathbb{Q} [{\bf u},{\bf x}]
\]
of the universal Kummer threefold $\mathcal{K}_3(2,4)$.
One of the minimal generators of $\mathcal{I}_3$ is the Satake polynomial 
of degree $(16,0)$. To find others,  one might try the following approach.
For any two non-negative integers $r$ and $s$, consider the
space $\mathbb{Q}[{\bf u},{\bf x}]_{r,s}$ of polynomials that are bihomogeneous
of degree $(r,s)$. This space has dimension
$ \binom{7+r}{7} \binom{7+s}{7}$. We seek to identify the subspace
$(\mathcal{I}_3)_{(r,s)}$ of polynomials that lie in our ideal $\mathcal{I}_3$.
That subspace can be computed using (numerical) linear algebra.
The basic idea is simple: using Swierczewski's 
code for  the Riemann theta function $\theta$,
we implemented pieces of {\tt Sage} code for
the second order theta functions (\ref{secondtheta}),
for the Kummer map (\ref{kummermap}),
for the theta constant map (\ref{thetaconstants}),
and for the universal Kummer map (\ref{univkummer}).
For each point $(\tau,z) \in \mathfrak{H}_3 \times \C^3$,
we can thus compute one linear constraint on polynomials in
$(\mathcal{I}_3)_{(r,s)}$, as these vanish at $\kappa(\tau,z)$.
By plugging in enough points, we get linear equations in $ \binom{7+r}{7} \binom{7+s}{7}$ unknowns
whose solution space equals $(\mathcal{I}_3)_{(r,s)}$.

In practice, however, this approach does not work at all.
The primary reason is that the size of our linear systems
is too large, even with the use of state-of-the-art
software for numerical linear algebra.
The key to any success would be the identification of 
subspaces in which equations for $\mathcal{I}_3$ can lie.
Methods from representation theory are essential here.

For instance, suppose we are told that
the Satake hypersurface has degree $16$, and 
we are asked to find its polynomial $\mathcal{S}$.
Solving the naive system of linear equations in $ \binom{7+16}{7}  = 245157$
unknowns is an impossible task.
However, all but $471$ of the unknown coefficients are zero. Solving the restricted system for the $471$ coefficients is easy.
We do this by computing a singular value decomposition of a matrix
whose rows are the $471$ monomials evaluated at
$\vartheta(\tau_i)$ for sufficiently many $3 \times 3$-matrices $\tau_i$.
The same works with the $1168$ monomials of degree $(16,4)$
in Lemma~\ref{lem:july27}.
From these we can compute a polynomial in $(\mathcal{I}_3)_{(16,4)}$. \qed
\end{remark}

 \section{Parametrization of the G\"opel variety} \label{sec:param:gopel}
 
 Consider the family of homogeneous quartic polynomials  in eight unknowns $x_{ijk}$
 given by the formula in (\ref{eq:coblequartic}).
 This family constitutes a projective space $\PP^{14}$ whose coordinates~are
 $$ (r: s_{001}: s_{010} : \cdots : s_{111}:   t_{001}: t_{010} : \cdots : t_{111}) .$$
 The {\em G\"opel variety} is the six-dimensional irreducible subvariety
 $\mathcal{G}$ of $\PP^{14}$  that is obtained as the closure of the set of all Coble quartics $F_\tau$ where $\tau$ runs over non-hyperelliptic $\tau \in \mathfrak{H}_3$.
  
 Coble \cite[\S 49]{Cob} describes $63$ cubic polynomials that cut out
$\mathcal{G}$  set-theoretically; see also \cite[\S IX.7, Proposition 9]{DO}.
In Section~\ref{sec:gopeleqns} we review Coble's construction, and we determine the prime ideal and its graded Betti table.
The G\"opel variety $\mathcal{G}$ is a compactification of the non-hyperelliptic locus inside the moduli space $\mathcal{M}_3(2)$. See \cite{Kon} for an analytic approach.

We now present an utterly explicit polynomial parametrization $\gamma$ of the  G\"opel variety
$\mathcal{G}$. Let $\PP^6$  be the projective space with coordinates
$(c_1:c_2:c_3:c_4:c_5:c_6:c_7)$. Our map
\begin{align} \label{eqn:param:weyl}
\gamma \colon \PP^6 \dashrightarrow \PP^{14}
\end{align}
is defined by the following $15$ homogeneous polynomials of degree $7$ in $c_1,c_2,\ldots,c_7$: 
\begin{align*}
r \,\, & =  \quad
4c_1c_2c_3c_4c_5c_6c_7 \\
s_{001} \, & = \phantom{-} c_1c_2c_7 (c_3^4-2c_3^2c_4^2+c_4^4-2c_3^2c_5^2-2c_4^2c_5^2+c_5^4-2c_3^2c_6^2-2c_4^2c_6^2-2c_5^2c_6^2+c_6^4) \\
s_{010} \, & =  -c_1c_3c_5 (c_2^4-2c_2^2c_4^2+c_4^4-2c_2^2c_6^2-2c_4^2c_6^2+c_6^4-2c_2^2c_7^2-2c_4^2c_7^2-2c_6^2c_7^2+c_7^4) \\
s_{011} \, & =  \phantom{-}c_1c_4c_6 (c_2^4-2c_2^2c_3^2+c_3^4-2c_2^2c_5^2-2c_3^2c_5^2+c_5^4-2c_2^2c_7^2-2c_3^2c_7^2-2c_5^2c_7^2+c_7^4) \\
s_{100} \, & = -c_2c_3c_6 (c_1^4-2c_1^2c_4^2+c_4^4-2c_1^2c_5^2-2c_4^2c_5^2+c_5^4-2c_1^2c_7^2-2c_4^2c_7^2-2c_5^2c_7^2+c_7^4)  \\
s_{101} \, & = 
\phantom{-} c_2c_4c_5 (c_1^4-2c_1^2c_3^2+c_3^4-2c_1^2c_6^2-2c_3^2c_6^2+c_6^4-2c_1^2c_7^2-2c_3^2c_7^2-2c_6^2c_7^2+c_7^4) \\
s_{110} \, & =
-c_3c_4c_7 (c_1^4-2c_1^2c_2^2+c_2^4-2c_1^2c_5^2-2c_2^2c_5^2+c_5^4-2c_1^2c_6^2-2c_2^2c_6^2-2c_5^2c_6^2+c_6^4) \\
s_{111} \, & =
\phantom{-} c_5c_6c_7 (c_1^4-2c_1^2c_2^2+c_2^4-2c_1^2c_3^2-2c_2^2c_3^2+c_3^4-2c_1^2c_4^2-2c_2^2c_4^2-2c_3^2c_4^2+c_4^4)  \\
t_{100} \, & = \,\,\,c_1(-c_2^4c_3^2+c_2^2c_3^4+c_2^4c_4^2-c_3^4c_4^2-c_2^2c_4^4+c_3^2c_4^4-c_2^4c_5^2-2c_2^2c_3^2c_5^2 +2c_3^2c_4^2c_5^2+c_4^4c_5^2 \\* 
& \quad +c_2^2c_5^4-c_4^2c_5^4 +c_2^4c_6^2-c_3^4c_6^2+2c_2^2c_4^2c_6^2-2c_3^2c_4^2c_6^2+2c_3^2c_5^2c_6^2-2c_4^2c_5^2c_6^2-c_5^4c_6^2\\* 
& \quad -c_2^2c_6^4+c_3^2c_6^4+c_5^2c_6^4+2c_2^2c_3^2c_7^2+c_3^4c_7^2-2c_2^2c_4^2c_7^2-c_4^4c_7^2+2c_2^2c_5^2c_7^2\\* 
& \quad -2c_3^2c_5^2c_7^2+c_5^4c_7^2-2c_2^2c_6^2c_7^2+2c_4^2c_6^2c_7^2-c_6^4c_7^2-c_3^2c_7^4+c_4^2c_7^4-c_5^2c_7^4+c_6^2c_7^4)  \\
t_{010} \, & = \,\,\, c_2(-c_1^4c_3^2+c_1^2c_3^4+c_1^4c_4^2-c_3^4c_4^2-c_1^2c_4^4+c_3^2c_4^4+c_1^4c_5^2-c_3^4c_5^2+2c_1^2c_4^2c_5^2-2c_3^2c_4^2c_5^2 \\* 
& \quad - c_1^2c_5^4+c_3^2c_5^4-c_1^4c_6^2-2c_1^2c_3^2c_6^2 +2c_3^2c_4^2c_6^2+c_4^4c_6^2+2c_3^2c_5^2c_6^2-2c_4^2c_5^2c_6^2\\* 
& \quad +c_5^4c_6^2+c_1^2c_6^4-c_4^2c_6^4-c_5^2c_6^4+2c_1^2c_3^2c_7^2+c_3^4c_7^2-2c_1^2c_4^2c_7^2-c_4^4c_7^2-2c_1^2c_5^2c_7^2\\* 
& \quad +2c_4^2c_5^2c_7^2-c_5^4c_7^2+2c_1^2c_6^2c_7^2-2c_3^2c_6^2c_7^2+c_6^4c_7^2-c_3^2c_7^4+c_4^2c_7^4+c_5^2c_7^4-c_6^2c_7^4)  \\
t_{001} \, & = \,\,\, c_3(-c_1^4c_2^2+c_1^2c_2^4+c_1^4c_4^2-c_2^4c_4^2-c_1^2c_4^4+c_2^2c_4^4+2c_1^2c_2^2c_5^2+c_2^4c_5^2 -2c_1^2c_4^2c_5^2\\* 
& \quad -c_4^4c_5^2-c_2^2c_5^4+c_4^2c_5^4-c_1^4c_6^2-2c_1^2c_2^2c_6^2+2c_2^2c_4^2c_6^2+c_4^4c_6^2+2c_1^2c_5^2c_6^2-2c_2^2c_5^2c_6^2\\* 
& \quad -c_5^4c_6^2+c_1^2c_6^4-c_4^2c_6^4+c_5^2c_6^4+c_1^4c_7^2 -c_2^4c_7^2+2c_1^2c_4^2c_7^2-2c_2^2c_4^2c_7^2-2c_1^2c_5^2c_7^2\\* 
& \quad +2c_4^2c_5^2c_7^2+c_5^4c_7^2+2c_2^2c_6^2c_7^2-2c_4^2c_6^2c_7^2-c_6^4c_7^2 -c_1^2c_7^4+c_2^2c_7^4-c_5^2c_7^4+c_6^2c_7^4) \\
t_{111} \, & = \,\,\, c_4(-c_1^4c_2^2+c_1^2c_2^4+c_1^4c_3^2-c_2^4c_3^2-c_1^2c_3^4+c_2^2c_3^4-c_1^4c_5^2-2c_1^2c_2^2c_5^2+2c_2^2c_3^2c_5^2+c_3^4c_5^2\\* 
& \quad +c_1^2c_5^4 -c_3^2c_5^4+2c_1^2c_2^2c_6^2+c_2^4c_6^2 -2c_1^2c_3^2c_6^2-c_3^4c_6^2+2c_1^2c_5^2c_6^2-2c_2^2c_5^2c_6^2+c_5^4c_6^2\\* 
& \quad -c_2^2c_6^4 +c_3^2c_6^4-c_5^2c_6^4+c_1^4c_7^2 -c_2^4c_7^2+2c_1^2c_3^2c_7^2-2c_2^2c_3^2c_7^2+2c_2^2c_5^2c_7^2-2c_3^2c_5^2c_7^2 \\* 
& \quad -c_5^4c_7^2 -2c_1^2c_6^2c_7^2 +2c_3^2c_6^2c_7^2+c_6^4c_7^2-c_1^2c_7^4+c_2^2c_7^4+c_5^2c_7^4-c_6^2c_7^4)\\
t_{011} \, & = \,\,\, c_5(-c_1^4c_2^2+c_1^2c_2^4+2c_1^2c_2^2c_3^2+c_2^4c_3^2-c_2^2c_3^4-c_1^4c_4^2-2c_1^2c_2^2c_4^2+2c_1^2c_3^2c_4^2-2c_2^2c_3^2c_4^2\\* 
& \quad -c_3^4c_4^2+c_1^2c_4^4+c_3^2c_4^4+c_1^4c_6^2 -c_2^4c_6^2-2c_1^2c_3^2c_6^2+c_3^4c_6^2+2c_2^2c_4^2c_6^2-c_4^4c_6^2\\* 
& \quad -c_1^2c_6^4+c_2^2c_6^4-c_3^2c_6^4+c_4^2c_6^4+c_1^4c_7^2-c_2^4c_7^2-2c_1^2c_3^2c_7^2+c_3^4c_7^2+2c_2^2c_4^2c_7^2 \\*
& \quad -c_4^4c_7^2+2c_1^2c_6^2c_7^2-2c_2^2c_6^2c_7^2+2c_3^2c_6^2c_7^2-2c_4^2c_6^2c_7^2-c_1^2c_7^4+c_2^2c_7^4-c_3^2c_7^4+c_4^2c_7^4) \\
t_{101} \, & = \,\,\, c_6(-c_1^4c_2^2+c_1^2c_2^4  - c_1^4c_3^2-2c_1^2c_2^2c_3^2+c_1^2c_3^4+2c_1^2c_2^2c_4^2+c_2^4c_4^2+2c_1^2c_3^2c_4^2 \\*
& \quad -2c_2^2c_3^2c_4^2+c_3^4c_4^2-c_2^2c_4^4-c_3^2c_4^4+c_1^4c_5^2-c_2^4c_5^2+2c_2^2c_3^2c_5^2-c_3^4c_5^2-2c_1^2c_4^2c_5^2+c_4^4c_5^2 \\* 
& \quad -c_1^2c_5^4+c_2^2c_5^4+c_3^2c_5^4-c_4^2c_5^4+c_1^4c_7^2-c_2^4c_7^2+2c_2^2c_3^2c_7^2-c_3^4c_7^2-2c_1^2c_4^2c_7^2+c_4^4c_7^2 \\* 
& \quad +2c_1^2c_5^2c_7^2 -2c_2^2c_5^2c_7^2-2c_3^2c_5^2c_7^2+2c_4^2c_5^2c_7^2-c_1^2c_7^4+c_2^2c_7^4+c_3^2c_7^4-c_4^2c_7^4) \\
t_{110} \, & = \,\,\,c_7(-c_1^4c_3^2+2c_1^2c_2^2c_3^2-c_2^4c_3^2+c_1^2c_3^4+c_2^2c_3^4-c_1^4c_4^2+2c_1^2c_2^2c_4^2-c_2^4c_4^2-2c_1^2c_3^2c_4^2\\* 
& \quad -2c_2^2c_3^2c_4^2+c_1^2c_4^4+c_2^2c_4^4+c_1^4c_5^2-2c_1^2c_2^2c_5^2+c_2^4c_5^2-c_3^4c_5^2+2c_3^2c_4^2c_5^2-c_4^4c_5^2-c_1^2c_5^4 \\* 
& \quad -c_2^2c_5^4+c_3^2c_5^4+c_4^2c_5^4+c_1^4c_6^2-2c_1^2c_2^2c_6^2+c_2^4c_6^2-c_3^4c_6^2+2c_3^2c_4^2c_6^2-c_4^4c_6^2+2c_1^2c_5^2c_6^2 \\* 
& \quad +2c_2^2c_5^2c_6^2-2c_3^2c_5^2c_6^2-2c_4^2c_5^2c_6^2-c_1^2c_6^4-c_2^2c_6^4+c_3^2c_6^4+c_4^2c_6^4)
\end{align*}

\begin{theorem}
\label{thm:24to1}
The G\"opel variety $\mathcal{G}$ is the closure of the image of the 
rational map $\gamma \colon \PP^6 \dashrightarrow \PP^{14}$
given by the polynomials above.
This map defines a $24$-to-$1$ cover of $\mathcal{G}$.
\end{theorem}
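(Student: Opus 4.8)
\noindent
The plan is to establish the two assertions in turn. For the identification $\overline{\gamma(\PP^6)}=\mathcal{G}$, observe that $\overline{\gamma(\PP^6)}$ is irreducible of dimension at most $6$, while $\mathcal{G}$ is irreducible of dimension $6$; so it suffices to prove (a) that $\gamma$ maps its domain of definition into $\mathcal{G}$, and (b) that $\dim\overline{\gamma(\PP^6)}=6$. Step (b) is cheap: let $\widehat\gamma\colon\mathbb{A}^7\to\mathbb{A}^{15}$ be the tuple of the fifteen displayed degree-$7$ polynomials and exhibit a $7\times 7$ minor of its $15\times 7$ Jacobian matrix that is not identically zero — in practice, evaluate the Jacobian at one random rational point and check that its rank is $7$. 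By the Euler identity the radial direction is not in the kernel, so full column rank forces $\gamma$ to be generically finite onto its image, whence $\dim\overline{\gamma(\PP^6)}=6$.

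For step (a) I would exploit the transcendental description of $\mathcal{G}$. The goal is to write down a rational dictionary between a point $(c_1{:}\cdots{:}c_7)\in\PP^6$ and a period matrix $\tau\in\mathfrak{H}_3$ — equivalently, between the $c_i$ and the data of a marked del Pezzo surface of degree $2$, in the language of the \emph{Coble linear system} of \cite{CGL} — under which the fifteen polynomials above specialize to the Coble coefficients $r(\tau),s_\bullet(\tau),t_\bullet(\tau)$ of \eqref{eq:coblequartic}. Since this dictionary has dense image in the source of $\gamma$, it yields $\gamma(\PP^6)\subseteq\overline{\{F_\tau\}}=\mathcal{G}$, and together with (b) and irreducibility we get $\overline{\gamma(\PP^6)}=\mathcal{G}$. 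A purely algebraic alternative, available once Section~\ref{sec:gopeleqns} has been developed, is to substitute the fifteen polynomials into Coble's $63$ cubics and check that every one of them vanishes identically; this is a symbolic computation with polynomials of degree $21$ in seven variables, and the inclusion then follows because those cubics define $\mathcal{G}$ set-theoretically.

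It remains to show that $\gamma$ has degree $24$. I would compute a generic fiber directly: choose a random rational point $c^{(0)}\in\PP^6$, put $q=\gamma(c^{(0)})$, form the ideal in $\mathbb{Q}[c_1,\dots,c_7]$ generated by the $2\times 2$ minors of the $2\times 15$ matrix whose rows are $q$ and $\widehat\gamma(c)$, and saturate it with respect to the irrelevant ideal and with respect to the base locus of $\gamma$; the claim is that the outcome is a zero-dimensional radical ideal of degree $24$. Redoing this for several independent choices of $c^{(0)}$, with a reducedness check each time, confirms that the generic fiber has exactly $24$ points, i.e. $\deg\gamma=24$. As an intrinsic sanity check, $\gamma$ is equivariant for the action of $W(\mathrm{E}_7)$ on $\PP^6$ (the reflection representation) and on $\PP^{14}$ (the $15$-dimensional Macdonald representation), so the generic fiber is preserved by the stabilizer of $q$; and, using $\deg\mathcal{G}=175$ from Theorem~\ref{thm:gopel}, intersecting $\mathcal{G}$ with six generic hyperplanes gives $175$ points whose preimage under $\gamma$ — the zero locus in $\PP^6$ of six generic linear combinations of the fifteen polynomials, with the base locus removed — should consist of $24\cdot 175 = 4200$ points.

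The main obstacle is step (a): since $\mathcal{G}$ is defined transcendentally as a closure of Coble quartics, proving $\gamma(\PP^6)\subseteq\mathcal{G}$ without circularity requires either the classical link between seven points in $\PP^2$ (equivalently, $\mathrm{E}_7$ root data) and the Coble quartic — which is exactly the identification with the \emph{Coble linear system} of \cite{CGL}, made precise in Theorem~\ref{thm:indeterminacy} — or a forward reference to the equations of $\mathcal{G}$ in Section~\ref{sec:gopeleqns}. A secondary point demanding care is ensuring that the number $24$ extracted from a sampled fiber is genuinely the generic fiber size, and not an artifact of a special base point; this is handled by the reducedness checks together with semicontinuity of fiber dimension, or can be made unconditional by carrying out the fiber computation symbolically over the function field of $\mathcal{G}$.
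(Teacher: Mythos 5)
Your plan is essentially sound but it takes a more computational route than the paper's own proof, and the two arguments emphasize quite different things, so a comparison is worthwhile. The paper's primary argument is representation-theoretic: it observes that the Macdonald copy $U^{15}_c\subset\mathrm{Sym}^7(\mathfrak{h}^*)$ is unique, hence so is the $W(\mathrm{E}_7)$-equivariant rational map $\gamma\colon\PP^6\dashrightarrow\PP^{14}$; this forces $\gamma$ to agree (up to coordinate choices) with the classical map of Dolgachev--Ortland, whose image closure is $\mathcal{G}$ by construction. This disposes of your step~(a) in one stroke, without any symbolic substitution into Coble's cubics or any appeal to the transcendental description. For the degree, the paper's main argument is again conceptual: the composition factors through the birational map $(d_1:\cdots:d_7)\mapsto D$ (cuspidal cubics in a net) followed by the G\"opel map on configurations of seven points in $\PP^2$, and the fiber size $24$ is exactly the number of cuspidal cubics in a general net, equivalently the number of flexes of the plane quartic arising as the branch curve of the degree-$2$ del Pezzo cover. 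Your Gr\"obner-basis fiber computation (minors of the $2\times 15$ matrix, saturation, radicality/degree check) is precisely what the authors record in the \emph{subsequent} remark as ``another way to see that the degree is $24$''; there they normalize $r=1$, find $168$ affine solutions, and divide by $7$ to account for the $\mu_7$-scaling ambiguity. So your degree argument matches their secondary argument, not their primary one.

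What the conceptual route buys is an explanation of \emph{why} the answer is $24$ (flexes of a quartic), and it neatly sidesteps the delicacy you flag in step~(a): no forward reference to the equations of $\mathcal{G}$ is needed, and no dependence on the CGL dictionary. What your route buys is independence from the classical literature and a certificate one can verify by machine. Two small cautions on your version: (i) the Jacobian-rank check via Euler's identity at a random point is fine, but make sure the chosen point is off the base locus so that at least one coordinate of $\gamma$ is nonzero there; (ii) the ``sanity check'' using $\deg\mathcal{G}=175$ should indeed be only a cross-check, since the paper's proof of Theorem~\ref{thm:gopel} itself uses the parametrization $\gamma$ to establish generic reducedness, so one must be careful not to let the cross-check bear logical weight. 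Neither of these is a gap; your plan, if executed with the symbolic verification of inclusion against Coble's $63$ cubics and a reduced fiber computation over the function field, yields a valid proof.
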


The rest of this section is devoted to a conceptual derivation
and geometric explanation of the map $\gamma$.
This will then furnish the proof of Theorem \ref{thm:24to1}.
In Section~\ref{sec:toricgopel} we shall see that $\gamma$ is equivalent to 
the embedding via G\"opel functions  described in \cite[\S IX.8, Theorem 5]{DO}.
Our construction here is based on the representation-theoretic approach developed in \cite{gsw}.

Consider the Heisenberg group $H$ that is generated by the six operators in (\ref{eq:sixoperators}).
Let $H'$ be the subgroup of $\mathrm{GL}_8(\mathbb{C})$ generated by $H$ and the scalar matrices $\mathbb{C}^*$. Let $N(H')$ be its normalizer. Then $N(H')/H' \cong \mathrm{Sp}_6(\mathbb{F}_2)$ by \cite[Exercise 6.14]{BL}. Since every element of $H'$ acts on the 15 parenthesized polynomials in \eqref{eq:coblequartic} by scalar multiplication, the space spanned by these is preserved by the action of $N(H')$. This action factors through $H$, so we get a 15-dimensional representation of $N(H') / H$. This contains $\mathrm{Sp}_6(\mathbb{F}_2)$ as a subgroup, and this 15-dimensional representation of $\mathrm{Sp}_6(\mathbb{F}_2)$ is irreducible. There is a unique such irreducible representation, so we call it $U^{15}$. 
For our computations we shall use the larger
group $W(\mathrm{E}_7) = \mathrm{Sp}_6(\mathbb{F}_2) \times \{\pm 1\}$.
  This is the Weyl group of the root system of type $\mathrm{E}_7$. We use the same symbol
  $U^{15}$ to denote the corresponding irreducible representation of $W(\mathrm{E}_7)$.

Let $\mathfrak{h} \cong \mathbb{C}^7$ be the reflection representation of $W(\mathrm{E}_7)$. We will work in two different coordinate systems of $\mathfrak{h}$. The first one we call $c_1, \dots, c_7$ (with the standard quadratic form $\sum_i c_i^2$), which has the property that each $c_i=0$ is a reflection hyperplane. Explicitly, the following two matrices are a pair of generators for $W(\mathrm{E}_7)$ in terms of this basis:
\small \begin{equation}\label{equation:two_generators}
\mu \,=\, \begin{pmatrix}0& 0& 0& 0& 1& 0& 0\\ 0& 1& 0& 0& 0& 0& 0\\ 0& 0& 1& 0& 0& 0& 0\\ 0& 0& 0& 0& 0& 0& {-1}\\ 1& 0& 0& 0& 0& 0& 0\\ 0& 0& 0& 0& 0& 1& 0\\ 0& 0& 0& {-1}& 0& 0& 0 \end{pmatrix}
\qquad \nu \,=\, \begin{pmatrix}0& {-1}& 0& 0& 0& 0& 0\\ 0& 0& {-1}& 0& 0& 0& 0\\ 1/2& 0& 0& 1/2& {-1/2}& 0& 1/2\\ 1/2& 0& 0& 1/2& 1/2& 0& {-1/2}\\ 1/2& 0& 0& {-1/2}& 1/2& 0& 1/2\\ 1/2& 0& 0& {-1/2}& {-1/2}& 0& {-1/2}\\ 0& 0& 0& 0& 0& {-1}& 0\end{pmatrix}
\end{equation}
\normalsize
One can verify with {\tt GAP} \cite{gap} that they generate a group of the correct size, and it contains seven
reflections that satisfy the Coxeter relations of type $\mathrm{E}_7$. See our supplementary files.

All representations of the Weyl group $W(\mathrm{E}_7)$ are isomorphic to their duals, as is true for any real reflection group. In what follows we shall
distinguish between representations and their duals to avoid confusion.
The coordinate ring of $\mathfrak{h}$ is ${\rm Sym}(\mathfrak{h}^*) =
\C[c_1,c_2,c_3,c_4,c_5,c_6,c_7]$.
 There is a unique copy of the 15-dimensional irreducible representation $U^{15}$ in 
${\rm Sym}^7(\mathfrak{h}^*)$. We call this subspace $U^{15}_c$. It can be realized explicitly by taking the $W(\mathrm{E}_7)$-submodule spanned by the monomial $c_1 c_2 c_3 c_4 c_5 c_6 c_7$. This is an example of a {\em Macdonald representation}~\cite{macdonald}. 

Since $U^{15} \cong (U^{15})^*$, there exists a unique basis in $U^{15}_c$ which is dual to the basis of the 15 Heisenberg-invariant quartics of \eqref{eq:coblequartic} in $U^{15}$. 
We computed that dual basis of $U^{15}_c$. It was found to consist of the above $15$ polynomials $r,s_{001}, \ldots, t_{111}$ of degree $7$ in $c_1,c_2,\ldots,c_7$.
These elements of  ${\rm Sym}^7(\mathfrak{h}^*)$ are the coordinates of our 
rational map $\gamma \colon \PP^6 \dashrightarrow \PP^{14}$.

By multiplying the  dual bases and summing, we get a $W(\mathrm{E}_7)$-invariant 
in $U^{15} \otimes U^{15}_c$. This is the defining polynomial
$F_\tau$ of the Coble quartic, with coefficients as above.
This concludes our derivation of the map $\gamma$.
Here is an alternative way of getting our formulas.
    
\begin{remark} \label{rmk:gsw} \rm
Let $A$ be an $8$-dimensional complex vector space with basis 
\[
a_1=x_{000},\ a_2=x_{100},\ a_3=x_{010},\ a_4=x_{110},\ a_5=x_{001},\ a_6=x_{101},\ a_7=x_{011},\ a_8=x_{111}.
\] 
The Heisenberg group $H$ acts on $\bigwedge^4 A$ and the space of invariants has the following basis:
\begin{small}
\begin{align*}
h_1 \,\,=\,\, a_1 \wedge a_2 \wedge a_3 \wedge a_4 \,+\, a_5 \wedge a_6 \wedge a_7 \wedge a_8,\\
h_2 \,\,= \,\,a_1 \wedge a_2 \wedge a_5 \wedge a_6 \,+\, a_3 \wedge a_4 \wedge a_7 \wedge a_8,\\
h_3 \,\,= \,\,a_1 \wedge a_3 \wedge a_5 \wedge a_7 \,+\, a_2 \wedge a_4 \wedge a_6 \wedge a_8,\\
h_4 \,\,= \,\,a_1 \wedge a_4 \wedge a_6 \wedge a_7 \,+\, a_2 \wedge a_3 \wedge a_5 \wedge a_8,\\
h_5 \,\,= \,\,a_1 \wedge a_3 \wedge a_6 \wedge a_8 \,+\, a_2 \wedge a_4 \wedge a_5 \wedge a_7,\\
h_6 \,\,=\,\, a_1 \wedge a_4 \wedge a_5 \wedge a_8 \,+\, a_2 \wedge a_3 \wedge a_6 \wedge a_7,\\
h_7 \,\,=\,\, a_1 \wedge a_2 \wedge a_7 \wedge a_8 \,+\, a_3 \wedge a_4 \wedge a_5 \wedge a_6.
\end{align*}
\end{small}
The techniques in \cite[\S 6.2, \S 6.5]{gsw} show how to start with any vector in $\bigwedge^4 A$ and produce the equations for a Kummer variety in the projective space of lines in $A^*$ along with the equation for its Coble quartic. If we apply these techniques to $c_1 h_1 + \cdots + c_7 h_7$, then we get the above formula for $F_\tau$. The derivation of Conjecture~\ref{conj:uno} will discuss this in detail. \qed
\end{remark}

We now come to the proof that our parametrization of the
G\"opel variety is correct.

\begin{proof}[Proof of Theorem \ref{thm:24to1}]
The Macdonald representation $U^{15}_c$ in ${\rm Sym}^7(\mathfrak{h}^*)$
is unique. It defines a  unique $W(\mathrm{E}_7)$-equivariant  rational map
$$
\gamma \colon \PP^6 \dashrightarrow \PP^{14}.
$$
Hence,  up to choosing coordinates on the two projective spaces, the map $\gamma$ agrees with the one described in \cite[\S IX.7, Remark 7]{DO}. The closure of its image must be the  G\"opel variety~$\mathcal{G}$.

To see this more explicitly, we now come to our second coordinate system,
denoted $d = (d_1,d_2,d_3,d_4,d_5,d_6,d_7)$,
for the reflection representation $\mathfrak{h}$ of $W(\mathrm{E}_7)$.
It is defined by 
\begin{equation}
\label{eq:fromctod}
\begin{matrix}
c_1 &=& d_2+d_4+d_5, & & 
c_2 &=& d_1+d_4+d_7,\\
c_3 &=& d_2+d_3+d_7, & & 
c_4 &=& d_1+d_3+d_5,\\
c_5 &=& d_1+d_2+d_6, & & 
c_6 &=& d_5+d_6+d_7,\\
c_7 &=& d_3+d_4+d_6.
\end{matrix}
\end{equation}
The labels on the right hand side define a Fano configuration like (\ref{eq:fano}).
In these coordinates, the $63$ reflection hyperplanes of $W(\mathrm{E}_7)$ are given
by the $21$ linear forms $d_i - d_j$, 
the $35$ linear forms $d_i+d_j+d_k$, and the
$7$ linear forms $d_i + d_j +d_k + d_l + d_m + d_n$.
Here the indices are distinct. 
As a warning to the reader, we note
that the $d$ variables do not represent orthogonal coordinates
for $\mathrm{E}_7$ since the quadratic form $\sum_i c_i^2$ is not mapped to $\sum_i d_i^2$.

The Weyl group $W(\mathrm{E}_7)$ acts by Cremona transformations
on the space of configurations of seven points in $\PP^2$. 
Following Dolgachev and Ortland \cite[\S IX.7, Remark 7]{DO},
there exists a canonical $W(\mathrm{E}_7)$-equivariant birational map
from  $\PP(\mathfrak{h}) = \PP^6$ to that space. In coordinates, 
this canonical map takes $(d_1:d_2: \cdots: d_7)$ to the column
configuration of the matrix
\begin{equation}
\label{eq:threebyseven}
D \,\,\,= \,\,\,
\begin{pmatrix} 1 & 1 & 1 & 1 & 1 & 1 & 1 \\ d_1 & d_2 & d_3 & d_4 & d_5 & d_6 & d_7 \\ d_1^3 & d_2^3 & d_3^3 & d_4^3 & d_5^3 & d_6^3 & d_7^3 \end{pmatrix}.
\end{equation}
The rational map $\gamma$ is the composition of that birational morphism
and the G\"opel map of \cite[\S IX.7, Thm.~5]{DO},  written 
explicitly  in Section~\ref{sec:toricgopel}, that takes $7$-tuples in $\PP^2$ to points in~$\mathcal{G}$.

Our claim about $\gamma$ being a 24-to-1 cover follows from the
fact that a general net of cubics in $\PP^2$ contains $24$ cuspidal cubics \cite[\S IV]{Bra}.
Indeed, the ring of invariants for plane cubics is generated by two polynomials of degrees $4$ and $6$, and the condition for a plane cubic to have a cusp is that both of these invariants vanish \cite[\S 10.3]{dolgachev:invt}.
This implies that, given seven general points in $\PP^2$, there exist precisely
$24$ representations (\ref{eq:threebyseven}) of that configuration up to projective transformations. The net of cubics through these seven points defines
a morphism from the corresponding del Pezzo surface of degree $2$ onto
$\PP^2$. The branch locus is a quartic curve, and the $24$ cuspidal cubics
in that net correspond to the $24$ flexes of that quartic.
\end{proof}

\begin{remark} \rm
The coordinate system $d$  and the connection to cuspidal cubics are due to Bramble \cite{Bra}.
Bramble also gives an explicit formula for the plane quartic in terms of
$W(\mathrm{E}_7)$-invariant polynomials in $d_1,\ldots,d_7$ \cite[\S VI]{Bra} (there the coordinates are called $t_1, \dots, t_7$).
We first learned about the matrix $D$ from \cite[\S 6]{HKT}. \qed
\end{remark}

\begin{remark} \rm
Here is another way to see that the degree of $\gamma$ is $24$. Form the $2 \times 15$-matrix
$$ 
\begin{pmatrix}
\,\,\gamma(c) \,\,\\
\,\,\gamma(C)\,\,
\end{pmatrix}
\quad = \quad
\begin{pmatrix}
\,r \,& s_{001} & s_{010} & \,\cdots \,& s_{110} & s_{111} &
\,      t_{001} & t_{010} & \,\cdots\, &   t_{111}  \\
\,R \,& S_{001} & S_{010} &\, \cdots \,& S_{110} & S_{111} &
\,      T_{001} & T_{010} & \,\cdots\, & T_{111}  
\end{pmatrix}.
$$
The first row consists of our degree $7$ polynomials in $c_1,\ldots,c_7$,
and the second row is the image under $\gamma$ of a random point $C \in \PP^6$. The corresponding fiber of $\gamma$ is defined by the 
ideal of $2 \times 2$-minors. One needs to verify that its scheme is reduced and consists of $24$ points in 
$\PP^{6}$. In practice, this is done as follows.
With probability one, we have  $R \ne 0$, and we can consider the affine scheme defined by $r=1, s_{001} = S_{001}/R, \dots, t_{111} = T_{111}/R$. Using {\tt Macaulay~2} we verify that this is a $0$-dimensional scheme of degree $168$. This gives the number of solutions of $(c_1, \dots, c_7) \in \mathbb{A}^7$ that belong to the fiber of this point. But if $\lambda c_1, \dots, \lambda c_7$ were also a solution, we would need $\lambda^7 = 1$ because all of the functions used to define $\gamma$ are of degree $7$. Hence the degree of the fiber of $\gamma$ is $168/7 = 24$, as desired. \qed
\end{remark}

\begin{remark} \label{rmk:E7hyperplanes} \rm
As indicated in the proof of Theorem \ref{thm:24to1}, the map
$\gamma$ is closely related to a map from the moduli space of $7$ points in $\PP^2$ 
to the moduli space of plane quartics. Here,
the reflection hyperplanes correspond to the $63$ discriminantal conditions which give singular plane quartics. It follows that the non-hyperelliptic locus of the moduli space $\mathcal{M}_3(2)$ is the image 
 in the G\"opel variety $\mathcal{G} $
of the complement of the $63$ reflection hyperplanes in $\PP^6$. The fiber of size $24$ corresponds to a choice of flex point on the plane quartic.

In Section~\ref{sec:toricgopel} we shall define an embedding of the G\"opel variety $\mathcal{G}$ into a certain $35$-dimensional toric variety $\mathcal{T}$. It lives in $\PP^{134}$
and $W(\mathrm{E}_7)$ acts on this embedding by permuting coordinates.
That toric variety $\mathcal{T}$ and its polytope elucidate the combinatorial structure of 
our varieties.
The image of the $63$ reflection hyperplanes gives 
the $63$ boundary divisors as the complement of 
the non-hyperelliptic locus of $\mathcal{M}_3(2)$ in 
$\mathcal{G}$. This determines the 
$63$ distinguished facets of the polytope of $\mathcal{T}$
mentioned in Theorem \ref{thm:toricvariety}.\qed
\end{remark}

After completing the present work, we learned that our
parametrization of the G\"opel variety, as well as some
of the material in Section \ref{sec:toricgopel}, had already been 
found by Colombo, van Geemen and Looijenga in \cite{CGL}.
In particular, the indeterminacy locus of our map
$\gamma \colon \PP^6 \dashrightarrow \mathcal{G}$
is determined in \cite[Proposition 4.18]{CGL}.
See also \cite[\S 1, Quartic curves]{CGL} for an interpretation of this result in terms of the geometric invariant theory of plane quartics. We now state their result, and we strengthen it by including the answer to \cite[Question 4.19]{CGL}.
This underlines the benefit of combining 
theoretical studies of moduli spaces with hands-on
computations.

Recall that a {\em flat} of a hyperplane arrangement is the intersection of some subset of the hyperplanes. The dimension of a flat will refer to the dimension of its projectivization.

\begin{theorem} {\rm (cf.~\cite{CGL})}
\label{thm:indeterminacy}
The indeterminacy locus of $\gamma$ is the reduced union of $315$ two-dimensional flats 
and $336$ one-dimensional flats of the reflection arrangement of $\mathrm{E}_7$ in $\mathbb{P}^6$.
 They correspond to the root subsystems of type $\mathrm{D}_4$ and $\mathrm{A}_5$
    listed in row 9 and row 15 of Table~\ref{fig:E7flats} in Section~\ref{sec:tropgeom}.
 The ideal  $\langle r, s_\bullet, t_\bullet \rangle$ is the intersection of these $651$ linear ideals.
   \end{theorem}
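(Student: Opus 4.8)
The plan is to identify the indeterminacy locus of $\gamma$ by intersecting two pieces of information: the common zero locus of the $15$ septic polynomials $r, s_\bullet, t_\bullet$ as a subscheme of $\PP^6$, and the combinatorial/representation-theoretic structure coming from the $W(\mathrm{E}_7)$-symmetry. First I would observe that the base locus $B = V(r, s_\bullet, t_\bullet)$ is $W(\mathrm{E}_7)$-stable, since the $15$ forms span the Macdonald representation $U^{15}_c \subset \mathrm{Sym}^7(\mathfrak{h}^*)$. Each of the $15$ forms is a product of a linear form (one of the $c_i$ or, after the change of coordinates \eqref{eq:fromctod}, one of the linear forms $d_i-d_j$, $d_i+d_j+d_k$, etc.) times a quartic; in particular, each reflection hyperplane divides several of the generators. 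This makes it plausible that $B$ is a union of flats of the $\mathrm{E}_7$-arrangement, and by $W(\mathrm{E}_7)$-equivariance it must be a union of full $W(\mathrm{E}_7)$-orbits of such flats. So the strategy reduces to: (i) determine which orbits of flats lie in $B$, and (ii) prove the reduced union of those flats is exactly $B$ (as a scheme), i.e.\ the ideal $\langle r, s_\bullet, t_\bullet\rangle$ is radical with the asserted primary decomposition.

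For step (i), I would work in the $d$-coordinates where the reflection hyperplanes have the transparent description given after \eqref{eq:fromctod}. The orbits of flats of the $\mathrm{E}_7$-arrangement are classified by root subsystems (this is the content of Table~\ref{fig:E7flats}); the candidates of dimension $2$ and $1$ correspond to the listed subsystems, and the claim singles out the $\mathrm{D}_4$-flats (there are $315$ of them) and the $\mathrm{A}_5$-flats ($336$ of them). To check that a generic point of, say, a $\mathrm{D}_4$-flat lies in $B$, it suffices by equivariance to check one representative: pick the corresponding linear subspace of $\PP^6$ in $d$-coordinates, substitute into all $15$ septics (after converting via \eqref{eq:fromctod}), and verify they all vanish identically on that subspace. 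This is a finite symbolic computation, easily done in {\tt Macaulay 2}. One then does the same for an $\mathrm{A}_5$-flat representative, and — crucially for minimality of the answer — checks that representatives of all \emph{other} orbits of $2$- and $1$-dimensional flats (and the $3$-dimensional flats, to rule out higher-dimensional components) are \emph{not} contained in $B$, so that $B$ has no components other than the $315 + 336 = 651$ claimed ones. Since $\gamma$ is a well-defined morphism away from $B$ and $B$ has codimension $\ge 2$ (the flats have dimension $\le 2$ in $\PP^6$), this pins down the indeterminacy locus set-theoretically.

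For step (ii) — the scheme-theoretic statement that $\langle r, s_\bullet, t_\bullet\rangle$ equals the intersection of the $651$ linear ideals — I would compute directly: form the ideal $J$ generated by the $15$ septics, form the ideal $J'$ as the intersection of the $651$ prime linear ideals (it is enough to intersect over orbit representatives and then saturate by the group action, or simply to intersect all $651$, which {\tt Macaulay 2} can handle, ideally over $\Z/101$ for speed as the paper suggests), and verify $J = J'$. Equivalently, check $J \subseteq J'$ (each septic vanishes on each flat — already done in step (i) for representatives, extended by symmetry) together with a dimension/degree count or a saturation argument showing no embedded or extra components: e.g.\ verify that $J$ and $J'$ have the same Hilbert polynomial, or that $J' \subseteq \mathrm{sat}(J)$ and $J$ is already saturated. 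The main obstacle I anticipate is precisely this radicality/primary-decomposition verification: a priori $\langle r, s_\bullet, t_\bullet\rangle$ could have embedded components or non-reduced structure along the flats, and ruling this out requires either a Gr\"obner-basis computation of the (co)dimension and degree of $V(J)$ that matches the known invariants of the union of flats, or an a priori argument (perhaps via the Cremona description of $\gamma$ and the geometry of nets of cubics degenerating along $\mathrm{D}_4$ and $\mathrm{A}_5$ loci) that the base scheme is reduced. Given the explicit equations, the computational route is the most reliable, and the $W(\mathrm{E}_7)$-symmetry keeps it feasible by reducing everything to a handful of orbit representatives.
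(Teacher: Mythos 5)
Your proposal is correct and follows essentially the same route as the paper: the set-theoretic identification of the base locus (your step (i)) is exactly what is established in \cite[Proposition~4.18]{CGL}, which the paper simply cites rather than re-derives; and for the ideal-theoretic statement (your step (ii)) the paper, like you, resorts to a {\tt Macaulay 2} computation comparing $\langle r, s_\bullet, t_\bullet\rangle$ with the intersection of the $651$ linear primes, with the only practical refinement being a specific ordering trick (grouping the $651$ ideals into seven batches according to which $c_i$ each contains before intersecting) to make the computation finish quickly.
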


\begin{proof}
The set-theoretic version of this theorem was proved in 
\cite[Proposition 4.18]{CGL}. We established the ideal-theoretic statement 
with a {\tt Macaulay 2} computation that is posted in our supplementary files.
The following remark offers a more detailed explanation.
\end{proof}

\begin{remark} \rm
The indeterminacy locus of $\gamma$ is the zero set in $\PP^6$ of
the ideal $\langle r, s_\bullet, t_\bullet \rangle = \langle r,s_{001}, \ldots,
s_{111}, t_{001}, \ldots,  t_{111}\rangle$ in the polynomial ring 
$\mathbb{Q}[c_1,c_2,\ldots,c_7] = \mathbb{Q}[d_1,d_2,\ldots,d_7]$.
Theorem \ref{thm:indeterminacy} states that this ideal is radical,
and that it is the intersection of $651 = 315 + 336$ ideals 
which are generated by linear forms. One of these components is the 
height $4$ prime
\begin{equation}
\label{eq:12hyperplanes}
\begin{matrix}
 \bigl\langle
 d_1{-}d_2, \,d_1{-}d_3,\,\,
 d_1{+}d_4{+}d_5\,, d_2{+}d_4{+}d_5\,, d_3{+}d_4{+}d_5\,
 , d_1{+}d_6{+}d_7, \,d_2{+}d_6{+}d_7, \,d_3{+}d_6{+}d_7, \\ \quad d_2{-}d_3,\,\,
 d_2{+}d_3{+}d_4{+}d_5{+}d_6{+}d_7, \,d_1{+}d_3{+}d_4{+}d_5{+}d_6{+}d_7, \,
 d_1{+}d_2{+}d_4{+}d_5{+}d_6{+}d_7
 \bigr\rangle.
 \end{matrix}
 \end{equation}
The $12$ listed generators form a root subsystem of type $\mathrm{D}_4$ in the root system $\mathrm{E}_7$. There are $315$ such subsystems in $\mathrm{E}_7$.
By \cite[\S 4]{manivel}, they are in  bijection with the
syzygetic triples of Steiner complexes, and also with the isotropic planes in 
$(\F_2)^6$. The latter will be discussed in Section~\ref{sec:toricgopel},
and a census of all root subsystems will be given in Table \ref{fig:E7flats}.
Each root subsystem $\mathrm{D}_4$ consists of $12 $ of the $63$ 
reflection hyperplanes for $\mathrm{E}_7$, and these
intersect in a $\PP^2$ inside $\PP^6 = \PP(\mathfrak{h})$.
The rank $4$ matroid  given by (\ref{eq:12hyperplanes})
 is  the {\em Reye configuration} in \cite[Figure 2]{manivel}.
 
The other $336$  components of $\langle r, s_\bullet, t_\bullet \rangle$
correspond to an orbit of root subsystems of type $\mathrm{A}_5$. Each such prime ideal is linear of height $5$,
and it contains $15$ of the $63$ linear forms.

To verify the claim,  we intersect all $651$ linear ideals in {\tt Macaulay 2}. We note that a naive intersection may run for several hours and still not terminate, but a careful choice of ordering will finish in a matter of seconds. First, we use the $c$-coordinates rather than the $d$-coordinates.
Each of our ideals contains at least one of the variables $c_i$. 
We separate them into seven groups of $93$ based
on which $c_i$ they contain (there is some redundancy). Then we intersect each of these groups.
Finally, we intersect the seven resulting ideals, and
the result is $\langle r, s_\bullet, t_\bullet \rangle$. We also used this procedure to get the ideals of the $\mathrm{D}_4$ flats and the $\mathrm{A}_5$ flats. We list the graded Betti tables for the indeterminacy locus of $\gamma$, the reduced union of the 315 flats of type $\mathrm{D}_4$, and the reduced union of the 336 flats of type $\mathrm{A}_5$, respectively:
\begin{multicols}{3}
\footnotesize
\begin{Verbatim}[samepage=true]
       0  1  2   3   4  5
total: 1 15 84 168 126 28
    0: 1  .  .   .   .  .
       ...
    6: . 15  .   .   .  .
    7: .  .  .   .   .  .
    8: .  .  .   .   .  .
    9: .  . 84   .   .  .
   10: .  .  . 168 105 21
   11: .  .  .   .   .  .
   12: .  .  .   .  21  .
   13: .  .  .   .   .  7
\end{Verbatim}
\begin{Verbatim}[samepage=true]
       0  1   2   3   4  5
total: 1 85 210 315 210 21
    0: 1  .   .   .   .  .
       ...
    6: . 15   .   .   .  .
    7: .  .   .   .   .  .
    8: . 70 210   .   .  .
    9: .  .   . 315 210  .
   10: .  .   .   .   . 21
\end{Verbatim}
\begin{Verbatim}[samepage=true]
       0  1   2   3   4   5
total: 1 36 315 595 420 105
    0: 1  .   .   .   .   .
    1: .  .   .   .   .   .
    2: .  .   .   .   .   .
    3: .  .   .   .   .   .
    4: . 21   .   .   .   .
    5: .  .   .   .   .   .
    6: . 15 315 595 420 105 
\end{Verbatim}
\end{multicols}

At this point, it is important to note that  $\langle r, s_\bullet, t_\bullet \rangle$ has an alternative generating set, consisting of $135$ products of linear forms
defining reflection hyperplanes. We shall present them in 
Section~\ref{sec:toricgopel}. This fact makes it obvious
that the variety consists of flats of $\mathrm{E}_7$. Colombo {\it et al.} 
used these $135$ generators  to prove the set-theoretic result
in \cite[Prop.~4.18]{CGL}. \qed
\end{remark}

\section{Equations defining the G\"opel variety} \label{sec:gopeleqns}

In this section we first determine the equations and graded Betti numbers of the G\"opel variety $\mathcal{G}$.
Subsequently, we compute the prime ideal of  the universal Coble quartic over $\mathcal{G}$.
This is the $12$-dimensional subvariety in $\PP^{14} \times \PP^7$
whose fibers over $\mathcal{G}$ are the quartics  (\ref{eq:coblequartic}).

\begin{theorem}
\label{thm:gopel}
The six-dimensional G\"opel variety $\mathcal{G}$ has degree $175$ in $\mathbb{P}^{14}$. 
The homogeneous coordinate ring of $\mathcal{G}$ is Gorenstein, it has the Hilbert series 
\begin{equation}
\label{eq:hilbertgopel}
\frac{1 + 8 z + 36 z^2 + 85 z^3 + 36  z^4 + 8 z^5 + z^6}{(1-z)^7},
\end{equation}
and its defining prime ideal  is minimally generated by $35$ cubics and $35$ quartics. The graded Betti table of this ideal in the polynomial ring  $\mathbb{Q}[r,s_{001}, \ldots,t_{111}]$ in $15$ variables equals
\tt \small \begin{Verbatim}[samepage=true]
       0  1   2    3    4    5   6  7 8
total: 1 70 609 1715 2350 1715 609 70 1
    0: 1  .   .    .    .    .   .  . .
    1: .  .   .    .    .    .   .  . .
    2: . 35  21    .    .    .   .  . .
    3: . 35 588 1715 2350 1715 588 35 .
    4: .  .   .    .    .    .  21 35 .
    5: .  .   .    .    .    .   .  . .
    6: .  .   .    .    .    .   .  . 1
\end{Verbatim}
\end{theorem}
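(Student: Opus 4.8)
The strategy is entirely computational, exploiting the explicit parametrization $\gamma\colon\PP^6\dashrightarrow\PP^{14}$ from Theorem~\ref{thm:24to1}. First I would use this parametrization to obtain the defining ideal of $\mathcal{G}$ by elimination: introduce the $15$ coordinates $r,s_\bullet,t_\bullet$ together with an auxiliary variable $\lambda$, form the ideal generated by $r-\lambda\cdot(4c_1\cdots c_7)$, $s_{001}-\lambda\cdot(\dots)$, etc., in $\mathbb{Q}[c_1,\dots,c_7,\lambda,r,s_\bullet,t_\bullet]$, and eliminate $c_1,\dots,c_7,\lambda$ using a Gr\"obner basis computation in {\tt Macaulay 2}. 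Since $\gamma$ has degree $24$ and $\PP^6$ has dimension $6$, the image $\mathcal{G}$ is indeed six-dimensional, and the elimination ideal is its radical prime ideal. As noted in the supplementary-materials discussion, this is precisely the computation that takes roughly $15$ minutes over $\mathbb{Q}$ (much faster over $\Z/101$).

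Once the prime ideal $I_{\mathcal{G}}$ is in hand, the remaining assertions are read off directly. I would compute a minimal free resolution of $\mathbb{Q}[r,s_\bullet,t_\bullet]/I_{\mathcal{G}}$ with the {\tt res} command, which simultaneously yields the graded Betti table displayed above, the Hilbert series \eqref{eq:hilbertgopel}, and the degree $175$ (as the sum of coefficients of the numerator, or via {\tt degree}). The Betti table shows the minimal generators are $35$ cubics and $35$ quartics (rows labelled $2$ and $3$ in the column $1$, with the $35$ quartics appearing because the generators of degree $3$ in the coordinate ring correspond to the ``$3$'' row entry $35$ under column $1$; more precisely the first syzygy module has $35$ generators in degree $3$ and $35$ in degree $4$). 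Gorenstein-ness follows from the shape of the resolution: it has length $8 = 15 - 7 = \operatorname{codim}$, so the ring is Cohen--Macaulay, and the last Betti number is $1$ with the resolution being symmetric (self-dual up to a shift by $-15$), which is precisely the Bass-number criterion for Gorenstein. The palindromic numerator of the Hilbert series \eqref{eq:hilbertgopel} is a consistency check on this.

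To make the proof robust rather than a mere report of a computation, I would supplement it with structural corroboration. The Weyl group $W(\mathrm{E}_7) = \mathrm{Sp}_6(\F_2)\times\{\pm1\}$ acts on $\PP^{14}$ through the irreducible representation $U^{15}$, and $I_{\mathcal{G}}$ is $W(\mathrm{E}_7)$-stable. So each graded piece of $I_{\mathcal{G}}$ is a subrepresentation, and the numbers $35$ and $35$ should be dimensions of (sums of) irreducible $W(\mathrm{E}_7)$-modules occurring in $\operatorname{Sym}^3 U^{15}$ and $\operatorname{Sym}^4 U^{15}$; decomposing these symmetric powers into irreducibles in {\tt GAP} both predicts the generator count and provides an independent check. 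The $35$ cubics are, as the text promises, the genus-$3$ analogue of the toric Segre cubic relations of \cite[(1.2)]{HMSV} and can be written down from the combinatorics of $(\F_2)^6$; exhibiting them explicitly and checking they vanish on $\gamma(\PP^6)$ gives a human-verifiable handle on at least the cubic part.

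\textbf{Main obstacle.} The genuine difficulty is the scale of the elimination: a Gr\"obner basis computation eliminating eight variables from an ideal of degree-$7$ polynomials in $22$ variables is at the edge of feasibility over $\mathbb{Q}$. The practical remedies are to run first over $\Z/101$ to learn the Hilbert function and Betti shape, then verify over $\mathbb{Q}$ that the same numbers hold (guarding against bad primes), and to exploit the $W(\mathrm{E}_7)$-symmetry to restrict to symmetry-adapted coordinates where possible. A secondary subtlety is certifying that the elimination ideal is already prime and radical (not just its radical) --- but this is automatic since $\gamma$ is a dominant morphism onto the irreducible variety $\mathcal{G}$, so the kernel of the induced map on coordinate rings is prime, and elimination computes exactly that kernel. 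The Gorenstein claim, finally, requires only inspecting that the computed resolution is self-dual, which {\tt Macaulay 2} reports directly.
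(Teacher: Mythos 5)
Your proposal is correct in principle, but it proceeds by a genuinely different route from the paper's, and the practical centerpiece of your plan is not what the authors actually do.

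The paper does \emph{not} compute $I_{\mathcal{G}}$ by eliminating $c_1,\dots,c_7$ from the parametrization. Instead, they \emph{construct} the $35$ cubics (from the restriction of the Coble quartic to the $63$ fixed planes $H_\epsilon^\pm$, giving Segre-cubic-type relations (\ref{eq:cubicrel})) and the $35$ quartics (via the binomial description of Theorem~\ref{thm:135} pushed through the change of basis~\eqref{eq:fgTOrst}), and then certify that these $70$ polynomials generate the prime ideal. That certification has two legs: the ideal is radical, and its zero set is already known (by Coble and Dolgachev--Ortland) to be the irreducible variety $\mathcal{G}$. Radicality is established by showing the scheme is generically reduced (a Jacobian rank-$8$ check at a random point of $\gamma(\PP^6)$) and satisfies Serre's $(S_1)$, which they get from arithmetic Cohen--Macaulayness by adding seven random linear forms and verifying the artinian quotient has the expected Hilbert series --- that regular-sequence check is the $15$-minute computation, \emph{not} an elimination as you assume. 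Gorenstein then follows from Stanley's palindromic-numerator criterion (their route) rather than from self-duality of a computed resolution (your route), and the Betti table is pinned down partly from the K-polynomial and regularity bound and partly by showing the $21$ linear syzygies on the cubics have full rank, rather than by running {\tt res} on the full ideal.

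Your elimination strategy is logically sound --- the kernel of $\mathbb{Q}[r,s_\bullet,t_\bullet]\to\mathbb{Q}[c_1,\dots,c_7]$ is automatically prime and is exactly $I_{\mathcal{G}}$, and no auxiliary $\lambda$ is even needed since all $\gamma_i$ have the same degree. But you should be more skeptical that the elimination terminates: eliminating seven variables from a system whose generators are degree-$7$ polynomials in $22$ variables with a lex-block ordering is typically far out of reach, and this is presumably why the authors never attempt it. What the paper's approach buys is feasibility \emph{and} explicit, geometrically meaningful generators: the cubics are the genus-$3$ analogue of the Segre cubic relations, the quartics arise from toric binomials, and both carry $W(\mathrm{E}_7)$-representation-theoretic content that your ``corroboration'' paragraph gestures at but that in the actual proof is load-bearing (it produces the generators, rather than merely checking a count after the fact).
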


\normalsize
\begin{remark} \rm
Before giving the proof, let us point out some geometric consequences of this theorem. 
Since the degree of the numerator is less than the degree of the denominator in the Hilbert series, we see that the Hilbert function agrees with the Hilbert polynomial for all nonnegative inputs. One can calculate directly that the Hilbert polynomial of $\mathcal{G}$ is 
\[
\frac{35}{144} t^{6}+\frac{35}{48} t^{5}+\frac{287}{144} t^{4}+\frac{133}{48} t^{3}+\frac{343}{72} t^{2}+\frac{7}{2} t+1.
\]

We see from the self-duality of the Betti table that the canonical bundle of $\mathcal{G}$ is 
\[
\omega_{\mathcal{G}} \,\,= \,\,\mathcal{E}xt^8_{\mathcal{O}_{\PP^{14}}}(\mathcal{O}_{\mathcal{G}}, \mathcal{O}_{\PP^{14}}(-15)) \,\,= \,\, \mathcal{O}_{\mathcal{G}}(-1).
\]
Hence $\mathcal{G}$ is a Fano variety. Since $\mathcal{G}$ is arithmetically Cohen--Macaulay, and the Hilbert polynomial and the Hilbert function agree, this implies 
$\mathrm{H}^i(\mathcal{G}, \mathcal{O}_{\mathcal{G}}(d)) = 0$
 if either $d \ge 0$ and $i > 0$ or if $d < 0$ and $i < 6$ \cite[Cor.~A1.15]{geom_syz}.
\qed
\end{remark}

\begin{proof}[Proof of Theorem~\ref{thm:gopel}]
Our point of departure for the derivation of Theorem \ref{thm:gopel}
is \cite[\S IX.5, Proposition 8]{DO}
and the subsequent corollary which gives a list of $63$ cubics.
We now review this construction.
For each of the $63$ non-zero half-periods 
$\epsilon/2 \in  \Lambda/2 \Lambda$, we  consider the linear involution on $\mathbb{P}^7$
that is induced from the action on second order theta functions
seen in (\ref{eq:involution}).
The fixed point set  of this involution on $\mathbb{P}^7$ is the union of
two three-dimensional planes $H_\epsilon^+$ and $H_\epsilon^-$,
and each of these planes intersects the Coble quartic in a Kummer surface.
The relations  (\ref{eq:cubicrel}) on the coefficients of these two Kummer surfaces
give the same cubic relation on $r,s_{ijk}, t_{ijk}$. This cubic lies in the
ideal of the G\"opel variety~$\mathcal{G}$.

For a concrete example consider $\epsilon = (1,0,0)$.
The corresponding involution (\ref{eq:involution}) fixes the coordinates
$x_{000}, x_{001}, x_{010}, x_{011}$ and it switches the sign on
$x_{100}, x_{101}, x_{110}, x_{111}$. The $3$-planes
$H_\epsilon^{\pm}$ are obtained by setting either of these
two groups of four variables to zero. 
This specialization erases ten of the $15$ summands in the formula
for $F_\tau$ in (\ref{eq:coblequartic}).
What remains are the terms with coefficients
  $r,s_{001}, s_{010}, s_{011}, t_{100}$.
  This represents a quartic Kummer surface in $H_\epsilon^+$
  resp.~in $H_\epsilon^-$. The relation (\ref{eq:cubicrel}) holds
  for that Kummer surface.  The argument in the previous paragraph shows
   that the following cubic lies in the ideal of~$\mathcal{G}$:
  $$ 16 r^3 + r t_{100}^2+4s_{001} s_{010} s_{011}- 4 r s_{001}^2-4 r s_{010}^2
-4 r s_{011}^2 .$$

For a more complicated example consider the half-period
$\epsilon = \tau \cdot (1,0,0)^t$.
The three-plane
$H_\epsilon^-$ is defined by
$x_{000}-x_{100} =  x_{001} - x_{101} =   x_{010}-x_{110} =  x_{011}-x_{111} = 0$,
 and the three-plane $H_\epsilon^+ $ is defined by
$x_{000}+x_{100} =  x_{001} + x_{101} =   x_{010}+x_{110} =  x_{011}+x_{111} = 0$.
The cubic relation on the Kummer surface obtained by restricting $F$ to 
$H_\epsilon^-$ or $H_\epsilon^+$ equals
\begin{align*}
16(s_{011}+2r)^3+(s_{011}+2r)(2t_{110}+2t_{101}+2t_{001}+2t_{010})^2 \\
+ 4(2s_{001}+2s_{010}+t_{100})(2s_{100}+t_{011}+2s_{111})(2s_{101}+t_{111}+2s_{110}) \\
-4(s_{011}+2r)(2s_{001} + 2s_{010}+t_{100})^2
-4(s_{011}+2r)(2s_{100} + t_{011} + 2s_{111})^2 
\\ -4(s_{011} + 2r)(2s_{101} + t_{111} + 2s_{110})^2.
\end{align*}
In this manner we obtain $63$ cubic equations for the
G\"opel variety $\mathcal{G}$, as stated in \cite[Corollary on p.~186]{DO}.
However, only
$35$ of these are linearly independent. A vector space basis consists of
those cubics that come from the $35$ nonzero even theta characteristics.  

In addition to the $35$ cubics constructed from Kummer surfaces as above,
 the  ideal of $\mathcal{G}$ has $35$  minimal generators of degree~$4$. 
 Here is an example of such a quartic generator:
$$ \begin{matrix}
48 r^2 s_{101} s_{110}-12 s_{011}^2 s_{101} s_{110} -12 s_{100}^2 s_{101} s_{110}
-4 s_{001}^2 s_{100} s_{111}-4 s_{010}^2 s_{100} s_{111}+8 s_{011}^2 s_{100} s_{111} +  \\
4 s_{100}^3 s_{111} 
{-}16 r s_{001} s_{101} s_{111}+8 s_{010} s_{011} s_{101} s_{111}+8 s_{100} 
s_{101}^2 s_{111} {-}16 r s_{010} s_{110} s_{111}
{+}8 s_{001} s_{011} s_{110} s_{111} \\ + 
8 s_{100} s_{110}^2 s_{111}+8 s_{001} s_{010} s_{111}^2 {-}
16 r s_{011} s_{111}^2 
{-} 4 s_{101} s_{110} s_{111}^2{-}4 s_{100} s_{111}^3
{-}s_{100} s_{111} t_{001}^2-s_{100} s_{111} t_{010}^2 \\ -2 s_{001} s_{010} t_{011}^2
+4 r s_{011} t_{011}^2+s_{101} s_{110} t_{011}^2+s_{100} s_{111} t_{011}^2 
{+}s_{100} s_{111} t_{100}^2 {+} s_{100} s_{111} t_{101}^2 
+s_{100} s_{111} t_{110}^2 \\
-s_{100} t_{011} t_{101} t_{110} 
 +s_{110} t_{010} t_{101} t_{111}+s_{101} t_{001} t_{110} t_{111}+2 s_{001} s_{010} t_{111}^2
-4 r s_{011} t_{111}^2
-2 s_{101} s_{110} t_{111}^2
\end{matrix}
$$
The explanation for the derivation of such quartics will come in the proof of Theorem~\ref{thm:135}.
 From the $70$ minimal generators,  a Gr\"obner basis
 for the ideal of $\mathcal{G}$ can be computed in
 {\tt Macaulay~2}, and from this one finds the degree $175$
 and the Hilbert series in Theorem~\ref{thm:gopel}.

The group ${\rm Sp}_6(\mathbb{F}_2)$ has exactly two distinct irreducible
representations of dimension $35$, and these two representations
are given respectively by the cubic generators and the quartic generators
of our ideal $\mathcal{G}$. In particular, we can obtain
all $70$ generators by lifting the two $6 \times 6$-matrices
$\mu $ and $\nu$ in Section~\ref{sec:param:gopel} to
$\mathbb{Q}[r,s_{001}, \ldots, t_{111}]$ and
applying these to one representative cubic and one 
representative quartic, for instance those displayed above. 
In Section~\ref{sec:toricgopel} we shall present an alternative model for
these two ${\rm Sp}_6(\mathbb{F}_2)$-submodules $\mathbb{Q}[r,s_{001}, \ldots, t_{111}]$, namely in terms of binomials in a polynomial ring in $135$ unknowns.

It remains to be proved that the $35$ cubics and the $35$ quartics actually
generate the prime ideal of $\mathcal{G}$. This is what we shall explain next.
The $70$ generators, as well as explicit matrix representations for $\mu$ and $\nu$ on the 15 Coble coefficients, are available in our supplementary materials,
and the reader can use these to verify all of our calculations in {\tt Macaulay~2}.

The main idea is to show that the scheme defined by this ideal is generically reduced and satisfies Serre's criterion $(S_1)$. These conditions ensure that our ideal is radical by \cite[Exercise 11.10]{eisenbud}.
By the earlier results of Coble \cite[\S 49]{Cob} and Dolgachev--Ortland \cite[\S IX.7, Theorem 5]{DO}, we already know that the cubics  alone cut out $\mathcal{G}$ set-theoretically, and hence
the variety of our ideal is irreducible of codimension eight.
So, to show `generically reduced'
only requires showing that at some point on ${\mathcal{G}}$, the Jacobian matrix has rank $8$.
This is easily done by substituting randomly chosen points from $\PP^6$ into the map $\gamma$.

To prove Serre's condition  $(S_1)$, we show that our ideal defines an arithmetically Cohen--Macaulay scheme.
This is done by adding seven random linear forms to the ideal generated by the cubics and quartics. The resulting ideal defines an artinian quotient of
$\mathbb{Q}[r,s_{001},\ldots, t_{111}]$, and we verified that its Hilbert series
is the numerator seen in (\ref{eq:hilbertgopel}). The computation shows that the seven linear forms are a regular sequence modulo our ideal, and hence we get that the scheme defined by the 35 cubics and 35 quartics is arithmetically Cohen--Macaulay. We remark that the regular sequence computation is intensive and took approximately 15 minutes to perform. An alternative method is to work over the finite field $\Z/101$. The Hilbert series remains unchanged, but the regular sequence computation takes only a few seconds to perform. This implies the result via standard flatness and semicontinuity arguments.

At this point  we have shown that the ideal generated by our cubics and quartics is prime.
To infer the Gorenstein property, we use Stanley's result \cite[Theorem 4.4]{stanley}, which states that a Cohen--Macaulay domain whose Hilbert series has a palindromic numerator is Gorenstein.
 
 We finally derive the Betti table asserted in Theorem \ref{thm:gopel}.
We have already shown that  the G\"opel variety ${\mathcal{G}}$ is arithmetically Gorenstein, which implies that the Betti table is symmetric \cite[Corollary 21.16]{eisenbud}. Multiplying the numerator of the Hilbert series  (\ref{eq:hilbertgopel})
by $(1-z)^{8}$ gives the graded Euler characteristic
(or~{\em K-polynomial}\,) of the minimal free resolution of $\mathcal{G}$. By the Cohen--Macaulay property, the Castelnuovo--Mumford regularity of ${\mathcal{G}}$ is the degree of the numerator of the Hilbert series, which is 6. Therefore, we have the following partial information about the Betti numbers:
\small
\begin{Verbatim}[samepage=true]
       0  1  2 3 4 5  6  7 8
total: 1 70  ? ? ? ?  ? 70 1
    0: 1  .  . . . .  .  . .
    1: .  .  . . . .  .  . .
    2: . 35 21 c f e  b  . .
    3: . 35  a d g d  a 35 .
    4: .  .  b e f c 21 35 .
    5: .  .  . . . .  .  . .
    6: .  .  . . . .  .  . 1
\end{Verbatim}
\normalsize
The unknown entries  satisfy the  linear equations
$$
{\tt a}-{\tt c} \,=\,588\,,\,\,
{\tt b}-{\tt d}+{\tt f} \,=\, -1715 \,,\,\,
-2{\tt e}+{\tt g} \,=\, 2350.
$$
Now, there are $21$ linear syzygies on the $35$ cubic generators,
and we compute the corresponding $35 \times 21$-matrix
whose entries are linear forms in $\mathbb{Q}[r,s_{001}, \ldots,t_{111}]$.
Evaluating this matrix at a random point in $\mathbb{Q}^{15}$ shows that it has full rank $21$
over the rational function field $\mathbb{Q}(r,s_{001}, \ldots,t_{111})$.
Hence its columns are linearly independent.
This implies ${\tt c} = 0$, and consequently
${\tt f} = {\tt e} = {\tt b} = 0$. The above linear equations 
then imply ${\tt a} = 588$,
${\tt d} = 1715$, and ${\tt g} = 2350$, so
the Betti table is as claimed.
This concludes our proof of Theorem \ref{thm:gopel}.
\end{proof}

Now we study the {\em universal Coble quartic} $\mathcal{C}$ in $\mathcal{G} \times \PP^7$. This
twelve-dimensional
variety is defined as the closure in $\PP^{14} \times \PP^7$ of the set of pairs
$\bigl( (r:s_{001}:\dots:t_{111}), (x_{000}: \dots:x_{111}) \bigr)$ 
such that $(r:s_{001}:\dots:t_{111})$ is a point in the non-hyperelliptic locus of 
$\mathcal{M}_3(2)$ in $\mathcal{G}$, 
and the pair is a point on the hypersurface
of bidegree $(1,4)$ that is given by the polynomial \eqref{eq:coblequartic}. 

\begin{corollary}
\label{cor:gucq}
The bihomogeneous prime ideal of the universal Coble quartic in $\PP^{14} \times \PP^7$ is minimally
generated by $71$ polynomials, namely the $35+35$ equations
of bidegrees $(3,0)$ and $(4,0)$ for 
 the G\"opel variety as in Theorem~\ref{thm:gopel}, along with the bidegree $(1,4)$ 
 equation~\eqref{eq:coblequartic}.
\end{corollary}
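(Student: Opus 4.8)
The plan is to exploit the fact that the asserted ideal is $J := I(\mathcal{G})_{\mathrm{ext}} + (F)$, where $F$ is the bidegree $(1,4)$ form \eqref{eq:coblequartic}, $I(\mathcal{G})$ is the prime ideal of the $35$ cubics and $35$ quartics from Theorem~\ref{thm:gopel}, and $I(\mathcal{G})_{\mathrm{ext}}$ denotes its extension to $\mathbb{Q}[r,s_{001},\dots,t_{111},x_{000},\dots,x_{111}]$. The containment $J \subseteq I(\mathcal{C})$ is immediate: the cubics and quartics vanish on $\mathcal{C}$ because the first projection of $\mathcal{C}$ lands in $\mathcal{G}$, and $F$ vanishes by the definition of $\mathcal{C}$. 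So the content is to prove that $J$ is prime of the right dimension, after which $J = I(\mathcal{C})$ is automatic, and then that the $71$ listed generators are minimal.

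For the set-theoretic identity $V(J) = \mathcal{C}$, write $R = \mathbb{Q}[r,s_{001},\dots,t_{111}]/I(\mathcal{G})$, which is a Gorenstein---hence Cohen--Macaulay---integral domain by Theorem~\ref{thm:gopel}, so that the coordinate ring of $\mathcal{G}\times\PP^7$ is the domain $R[{\bf x}]$. A bidegree count shows $F \notin I(\mathcal{G})_{\mathrm{ext}}$: the latter is generated in bidegrees $(3,0)$ and $(4,0)$ and therefore has no elements of bidegree $(1,4)$. Hence $F$ restricts to a nonzero, and thus non-zerodivisor, element of $R[{\bf x}]$, and by Krull's principal ideal theorem $V(J) = (\mathcal{G}\times\PP^7)\cap V(F)$ is pure of codimension one in $\mathcal{G}\times\PP^7$, i.e.\ pure of dimension $12$. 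It contains the irreducible $12$-dimensional variety $\mathcal{C}$, and over the dense non-hyperelliptic locus $\mathcal{G}^\circ$ of $\mathcal{G}$ the fiber of $V(J)$ is the irreducible Coble quartic of Proposition~\ref{prop:CC}, which is exactly the fiber of $\mathcal{C}$. Thus any irreducible component $W$ of $V(J)$ that meets $\pi_1^{-1}(\mathcal{G}^\circ)$ is contained in $\overline{\pi_1^{-1}(\mathcal{G}^\circ)\cap V(J)} = \mathcal{C}$ and so equals $\mathcal{C}$ by equality of dimensions, while any $W$ disjoint from $\pi_1^{-1}(\mathcal{G}^\circ)$ maps into $\mathcal{G}\setminus\mathcal{G}^\circ$, of dimension $\le 5$, forcing $\dim W \le 5 + 6 = 11$, contradicting purity. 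Hence $V(J) = \mathcal{C}$.

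For primality it suffices to show that $R[{\bf x}]/(F)$ is an integral domain. Since $R$ is Cohen--Macaulay, so is $R[{\bf x}]$, and since $F$ is a non-zerodivisor the quotient $R[{\bf x}]/(F)$ is Cohen--Macaulay as well, hence satisfies Serre's condition $(S_1)$; by Serre's criterion for reducedness (see \cite[Exercise~11.10]{eisenbud}) it is then enough to check generic reducedness, after which, having irreducible spectrum $\mathcal{C}$, the ring is a domain. Passing to $K = \mathrm{Frac}(R)$, generic reducedness amounts to $F$ being irreducible in the UFD $K[{\bf x}]$: then $(F)K[{\bf x}]$ is prime, hence radical, and because $\mathcal{C}$ dominates $\mathcal{G}$ this is precisely the localization of the unique minimal prime of $R[{\bf x}]/(F)$. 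Irreducibility of $F$ over $K$---indeed over $\overline{K}$---holds because a nontrivial factorization would spread out over a finitely generated subring and specialize to a nontrivial factorization of the Coble quartic of a generic non-hyperelliptic genus-three Jacobian over $\C$; but such a quartic threefold in $\PP^7$ cannot factor, since the singular locus of a reducible quartic contains a hyperplane $\PP^6$ or the complete intersection of two quadrics---of dimension at least $5$---whereas the singular locus of a Coble quartic is the Kummer threefold, of dimension $3$ (Proposition~\ref{prop:CC}). Therefore $J$ is prime, and with $V(J) = \mathcal{C}$ we conclude $J = I(\mathcal{C})$. (Alternatively, primality of $J$ can be verified directly in {\tt Macaulay~2}, e.g.\ over $\Z/101$, as in the supplementary files.)

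Finally, minimality of the $71$ generators is once more a bidegree argument. The $35$ cubics and $35$ quartics are minimal generators of $I(\mathcal{G})$ by Theorem~\ref{thm:gopel}, and they remain minimal generators of $J$ because $F$ and its $R[{\bf x}]$-multiples all have $x$-degree at least $4$ and so contribute nothing in bidegrees $(3,0)$ or $(4,0)$; conversely $F$ is a minimal generator because $J$ contains no element of bidegree $(1,4)$ beyond scalar multiples of $F$, as $I(\mathcal{G})_{\mathrm{ext}}$ vanishes in bidegrees $(0,4)$, $(1,3)$ and $(1,4)$. I expect the primality step to be the crux: either making the spreading-out-and-specialization argument for absolute irreducibility of $F$ fully rigorous, or, on the computational side, certifying that the direct {\tt Macaulay~2} primality check is valid over $\mathbb{Q}$ (or over a prime of good reduction).
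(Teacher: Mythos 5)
Your proof follows the same overall skeleton as the paper's: use Cohen--Macaulayness of $\mathcal{G}$ to make $F$ a non-zerodivisor and deduce that the resulting scheme $\mathcal{C}'$ is equidimensional of dimension $12$; establish irreducibility by separating components lying over the non-hyperelliptic locus $\mathcal{G}^\circ$ from those lying over its complement; then combine $(S_1)$ (from Cohen--Macaulayness) with generic reducedness to conclude primality. The genuinely different step is generic reducedness. The paper runs a block-triangular Jacobian argument: the first $70$ equations involve no $\mathbf{x}$, so at a pair $(p,x)$ with $p$ a smooth point of $\mathcal{G}$ inside $\mathcal{G}^\circ$ and $x$ a smooth point of its Coble quartic, the Jacobian of the $71$ equations has rank $9$, the codimension, and reducedness at a point suffices. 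You instead localize at $K=\mathrm{Frac}(R)$ and reduce to irreducibility of $F$ in the UFD $K[\mathbf{x}]$, which you obtain geometrically from the observation that a reducible (or non-reduced) quartic in $\PP^7$ has singular locus of dimension $\ge 5$, whereas a Coble quartic has singular locus of dimension $3$ by Proposition~\ref{prop:CC}. That is an attractive alternative, but the Jacobian route is more elementary and needs no spreading out; in your version the specialization step should be tightened (closed points of $\mathrm{Spec}\,R$ have number-field residue fields, so either base-change to $\C$ first, or argue directly via upper semicontinuity of $\dim\mathrm{Sing}(\mathcal{C}_p)$ over $\mathcal{G}$), though as you observe only irreducibility over $K$, not $\overline K$, is really required. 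Two smaller remarks: your bound $\dim W\le 5+6$ for a component $W$ over $\mathcal{G}\setminus\mathcal{G}^\circ$ implicitly uses that the $15$ Coble coefficients never simultaneously vanish on $\mathcal{G}$ (automatic, as they are the homogeneous coordinates of $\PP^{14}$; the paper states this explicitly); and your bidegree argument for minimality of the $71$ generators is a welcome addition that the paper leaves unstated.
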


\begin{proof}
The  equation \eqref{eq:coblequartic}
cuts out a codimension one subscheme $\mathcal{C}'$ of $\mathcal{G} \times \PP^{7}$. Since $\mathcal{G}$ is arithmetically Cohen--Macaulay in its embedding in $\PP^{14}$, this implies that \eqref{eq:coblequartic} is a non-zerodivisor on $\mathcal{G} \times \PP^{7}$,  and that the scheme $\mathcal{C}'$ is arithmetically Cohen--Macaulay as well.
 To show that our ideal is prime, it suffices 
  to show that it is irreducible and generically reduced
  (as explained in the proof of Theorem~\ref{thm:gopel}).
  This will imply    $\mathcal{C}' = \mathcal{C}$.

Consider the projection $\pi \colon \mathcal{C}' \to \mathcal{G}$. The fibers of $\pi$ have constant dimension $6$ since the Coble coefficients cannot simultaneously vanish on any point of $\mathcal{G}$. Let $\mathcal{U} \subset \mathcal{M}_3(2)$ denote the non-hyperelliptic locus. Over the subvariety $\mathcal{U} \subset \mathcal{G}$, the fibers of $\pi$ are Coble quartics, and hence irreducible. Since $\mathcal{C}'$ is arithmetically
Cohen--Macaulay, all of its irreducible components have the same dimension, namely $12$. The closure of $\pi^{-1}(\mathcal{U})$ gives an irreducible component. The complement of $\pi^{-1}(\mathcal{U})$ in $\mathcal{C}'$ is the preimage of $\mathcal{G} \backslash \mathcal{U}$. 
But $\dim \pi^{-1}(\mathcal{G} \backslash \mathcal{U}) \le 11$, so it cannot be an irreducible component.
Hence the variety $\mathcal{C}'$ is irreducible.

Now consider the Jacobian matrix of the $71$ equations defining $\mathcal{C}'$.
  Since the $70$ equations for $\mathcal{G}$ involve no ${\bf x}$ coordinate, this matrix
       can be put in block triangular form. Choose any point 
$\bigl( (r : s_{001} : \cdots : t_{111}), (x_{000} :  \cdots : x_{111})\bigr)$,
where $(r : s_{001} : \cdots  :  t_{111}) \in \mathcal{U}$ is a non-singular point of $\mathcal{G}$, 
and $(x_{000} :  \cdots :  x_{111})$ is a non-singular point on its Coble quartic.
 Then this is a non-singular point of $\mathcal{C}'$. Hence the scheme $\mathcal{C}'$ defined by
the $71$ proposed equations is also generically reduced. We conclude that
they generate the prime ideal of~$\mathcal{C}= \mathcal{C'}$.
\end{proof}

\section{A toric variety for seven points in $\PP^2$} \label{sec:toricgopel}

In this section we present an alternative embedding of the G\"opel variety, with beautiful
combinatorics. We learned this from Dolgachev and Ortland  \cite[\S IX.7]{DO}.
Here, the G\"opel variety sits in the high-dimensional 
projective space $\PP^{134}$ whose coordinates are the $135$ G\"opel functions.
The corresponding ideal  is generated by
linear trinomials,  cubic binomials and quartic binomials. We will first describe the combinatorial structure of the
G\"opel functions.
The paper \cite{RSS} features  the analogous toric varieties 
for smaller Macdonald representations.

Consider the six-dimensional vector space
$(\F_2)^6$ over the two-element field $\F_2$.
We fix the following non-degenerate symplectic form 
on this $64$-element vector space:
\begin{equation}
\label{eq:inprod}
 \langle x, y \rangle \quad = \quad
x_1 y_4 + x_2 y_5 + x_3 y_6 + x_4 y_1 + x_5 y_2 + x_6 y_3 .
\end{equation}
A linear subspace $V \subset (\F_2)^6$ is {\em isotropic}
if $\langle x,y \rangle = 0$ for all $x,y \in V$.
Clearly, all subspaces of dimension $\leq 1$ are isotropic, and
there are no isotropic subspaces of dimension $\geq 4$.
There are precisely $315$ isotropic subspaces of dimension two,
and $135$ isotropic subspaces of dimension three.
The former are called {\em isotropic planes}, and the latter
are called  {\em Lagrangians}. There are
 $63$ non-zero vectors in $(\F_2)^6$, and each 
 Lagrangian contains seven of these.  Each
 isotropic plane contains three of these, and is
 contained in precisely three Lagrangians.

The root system $\mathrm{E}_7$ consists of $126$ vectors
in a seven-dimensional inner product space. If we
take this space to be the hyperplane in $\R^8$ given by
coordinate sum zero, then the $63 = 7 + \binom{7}{2} + \binom{7}{3}$ positive roots are
$e_8 - e_i$ for $1 \le i \le 7$, 
$e_i - e_j$ for $1 \leq i < j \leq 7$,
and 
$\frac{1}{2}(e_8 + \sum_{i \in \sigma} e_i - \sum_{j \not\in \sigma} e_j)$
where $\sigma$ runs over all
three-element subsets of $\{1,2,\ldots,7\}$. We label the
positive roots of ${\rm E}_7$ by the pairs in $\{1,2,
\ldots,8\}$ and the triplets in $\{1,2,\ldots,7\}$.

\begin{table}
\begin{center} \begin{tabular}{c||c|c|c|c|c|c|c|c}
& 000 & 100 & 010 & 110 & 001 & 101 & 011 &111\\
\hline\hline
 000 & & 236  & 345 & 137 & 467 & 156 & 124 & 257 \\
 100 & 237 & 67 & 136 & 12 & 157 & 48 &  256 & 35 \\
 010 &   245 & 127 & 23 & 68 & 134 & 357 & 15 & 47 \\
 110 & 126 & 13 & 78 & 145 & 356 & 25 & 46 & 234 \\
 001 & 567 & 146 & 125 & 247 & 45 & 17 & 38 & 26 \\
  101 &  147 & 58 & 246 & 34 & 16 & 123 & 27 & 367 \\
   011 & 135 & 347 & 14 & 57 & 28 & 36 & 167 & 456 \\
  111 & 346 & 24 & 56 & 235 & 37 & 267 & 457 & 18 
\end{tabular} \end{center}
\caption{Cayley's bijection between lines in $(\F_2)^6$ and positive roots of $\mathrm{E}_7$.}
\label{fig:cayleytable}
\end{table}

We copied Table~\ref{fig:cayleytable} from Cayley's paper \cite{Cay}. It
fixes a particular bijection between the positive roots of $\mathrm{E}_7$
and the vectors in $(\F_2)^6 \backslash \{0\}$.
Its rows represent the first three coordinates
and its columns represent the last three coordinates of
a vector in $(\F_2)^6$. For instance, the triple $247$ corresponds to the
vector $ (0,0,1,1,1,0)$, and the pair $24$ corresponds to 
$(1,1,1,1,0,0)$.
This bijection has the property in \cite[Lemma IX.8]{DO}:
two positive roots in $\mathrm{E}_7$ are perpendicular in $\R^8$
if and only if the corresponding vectors $x,y \in (\F_2)^6$
satisfy $\langle x,y \rangle = 0$ in $\F_2$.
 Cayley had constructed his table in the 1870s to have
precisely this property.

The orthogonality preserving bijection between 
$(\F_2)^6 \backslash \{0\}$ and the positive roots of $\mathrm{E}_7$ 
shows that the Weyl group of $\mathrm{E}_7$ (modulo its
two-element center) is isomorphic to the 
group  ${\rm Sp}_6(\F_2)$ of $6 \times 6$-invertible
matrices over $\F_2$ that preserve
the symplectic form in \eqref{eq:inprod} (see \cite[\S VI.4, Exercice 3]{bourbaki}).
The order of that group is easily seen (cf.~\cite[\S II.23, (7)]{Cob}) to be
\begin{equation}
\label{eq:1451520}
|{\rm Sp}_6(\F_2)| \,\,\, = \,\,\,
 (2^6-1)\cdot 2^5 \cdot (2^4-1) \cdot 2^3 \cdot (2^2-1) \cdot 2^1 
\,\,\, = \,\,\, 36 \cdot 8 !  \,\, = \,\, 1451520. 
\end{equation}

With the relabeling given by Cayley's table, the $135$ Lagrangians
fall into two classes with respect to permutations of the set
$\{1,2,3,4,5,6,7\}$.
First, there are $30$ {\em Fano configurations}
\begin{equation}
\label{eq:fano}
 \bigl\{ 124, \, 235, \, 346, \, 457, \, 561, \, 672,\, 713 \bigr\}.
 \end{equation}
 We denote this configuration by $f_{1234567}$.
 Second, there are $105$ {\em Pascal configurations} like
\begin{equation}
\label{eq:pascal}  \bigl\{ 12, 34, 56, 78, 127, 347, 567 \}.
\end{equation}
We denote this configuration by $g_{7123456}$.
If the configurations are changed by a permutation of
$\{1,2,3,4,5,6,7\}$ then the indices of the label $f_\bullet$ or $g_\bullet$
are permuted accordingly. Hence the labeling is not unique. For example, the Fano
configuration $f_{1234567}$ in (\ref{eq:fano}) can also be labeled $f_{2345671}$, and the 
Pascal configuration  $g_{7123456}$ in (\ref{eq:pascal}) can also be labeled $g_{7345612}$.
Some permutations give rise to a sign change in
$f_\bullet$ or $g_\bullet$.

 The $315$ isotropic planes in $(\mathbb{F}_2)^6$ fall into three classes.
Each class has cardinality $105$. Using
the labeling given by Cayley's Table~\ref{fig:cayleytable},
 representatives of these three classes are
\begin{equation}
\label{eq:planes}
 \{12,\,123,\,38\} \quad \hbox{and} \quad
 \{123, \,145, \, 167 \} \quad \hbox{and} \quad
\{ 12,\, 34 ,\, 567 \}. 
\end{equation}

Consider the six-dimensional variety of unlabeled configurations of seven points
in the projective plane $\PP^2$. There are natural correspondences, described
in \cite[\S IX]{DO}, that take such configurations to Cayley octads,
hence to plane quartic curves, and hence to abelian threefolds. Here is how
to make this completely explicit.
We associate with each  Lagrangian in $(\mathbb{F}_2)^6$
 a homogeneous symmetric polynomial in the brackets $[ijk]$, which are the $3 \times 3$-minors
of the $3 \times 7$-matrix of homogeneous coordinates on $(\PP^2)^7$.
These bracket polynomials are called {\em G\"opel functions} in \cite{Cob, DO}.
Each of the $30$ Fano configurations translates verbatim into a bracket monomial.
For instance, the Fano configuration (\ref{eq:fano}) translates into
\begin{equation}
\label{eq:fano2}
f_{1234567} \quad = \quad   [124] [235][346][457][561][672][713].
 \end{equation}
  The $105$ Pascal configurations represent  six points lying on a conic, 
  e.g.~let $Q_7$ denote the quartic bracket polynomial
that vanishes when the points $1,2,3,4,5,6$ lie on a conic:
\begin{equation}
\label{eq:pascal3}
Q_7 \quad = \quad  
[134][156][235][246] -  [135][146][234][256].
\end{equation}
Then the Pascal configuration (\ref{eq:pascal}) translates into the bracket binomial
\begin{equation}
\label{eq:pascal2}
 g_{7123456} \quad = \quad  [127][347][567] \cdot Q_7 . 
 \end{equation}
We have thus defined a coordinate system on $\PP^{134}$, 
consisting of $30$ coordinates $f_{\bullet}$ and 
$105$ coordinates  $g_{\bullet}$, and we have defined
a rational map 
\begin{align} \label{eqn:gopelmap}
(\PP^2)^7  \dashrightarrow \PP^{134}
\end{align}
whose coordinates are multi-homogeneous bracket polynomials of degree  
$(3,3,3,3,3,3,3)$. This map factors through the six-dimensional space of configurations of seven points in $\PP^2$.
 
One desirable property of this coordinate system is that it fits well with $\mathrm{E}_7$. Consider the action of the symplectic group ${\rm Sp}_6(\mathbb{F}_2)$ on the $135$ Lagrangians in $(\mathbb{F}_2)^6$.
Then the induced action on our $135$ G\"opel coordinates
$f_{\bullet}$ and   $g_{\bullet}$ is realized by signed permutations.

Dolgachev and Ortland \cite[\S IX.7, Proposition 9]{DO} describe 
the following $315$ linear relations among the G\"opel functions.  For each of the $315$ isotropic planes, there is a linear relation among the three G\"opel coordinates indexed by the three Lagrangians containing 
that plane.  For instance, the three isotropic planes in (\ref{eq:planes}) determine the linear relations
$$
\begin{matrix}
g_{3124567}-g_{3124657}+g_{3124756} & 
g_{1234567}-f_{1243765}+f_{1243675}  &
g_{5123467}-g_{6123457}+g_{7123456}  \\
& & \\
\{12,38,123,45,67,345,367\} & 
\{123,145,167,18,23,45,67\} & 
\{12,34,567, 58,67,125,345\}
 \\
\{12,38,123,46,57,346,357\}  & 
\{123,\!145, \! 167, \! 247, \! 256, \! 346, \! 357\} & 
\{12,34,567, 68,57,126,346\}
\\
\{12,38,123,47,56,347,356\} & 
\{123, \! 145, \! 167, \! 246, \! 257, \! 347, \! 356\} & 
\{12,34,567,78,56, 127,347\}
\\
\end{matrix}
$$ 
In each column we list the corresponding triple of Lagrangians.
These linear forms arise from  the three-term quadratic Pl\"ucker relations among the brackets $[ijk]$. These linear forms span a $120$-dimensional space, so they cut out a $14$-dimensional linear subvariety $L \subset \mathbb{P}^{134}$. Thus, the rational map \eqref{eqn:gopelmap} factors through this linear space:
\begin{equation}
\label{eq:7to14to134}
(\mathbb{P}^2)^7\dashrightarrow L\subset \mathbb{P}^{134}.
\end{equation}

Returning to the Coble quartic $F_\tau$ in (\ref{eq:coblequartic}),
its $15$ coefficients span the same irreducible ${\rm Sp}_6(\F_2)$-module as the
$135$ G\"opel functions.
 Coble states in 
 \cite[\S IV.49,(16)]{Cob} that
\[
r,s_{001},s_{010},s_{011},s_{100},s_{101},s_{110},s_{111},t_{001},t_{010},
t_{011}, t_{100}, t_{101} , t_{110}, t_{111}
\]
can be expressed as integer linear combinations of the
G\"opel functions $f_{\bullet{}}$ and $g_{\bullet{}}$.

Constructing such linear combinations turned out to be
a non-trivial undertaking, but we succeeded in obtaining 
formulas for writing $r,s,t$ in the G\"opel coordinates $f_\bullet$, $g_\bullet$ by following exactly the derivation described by Coble \cite[\S IV.49,(16)]{Cob}. Combining with $120$ independent linear trinomial relations among the G\"opel functions described above, we  inverted
 the linear relations and found an
explicit list of 135 transformation formulas such~as
\begin{equation}
\label{eq:fgTOrst}
 \begin{matrix}
f_{1234657} &=&  - 4 r-2 s_{001} - 2 s_{010} - 2 s_{011} - 2 s_{100}
-2 s_{101} - 2 s_{110} - 2 s_{111} \\ & & - t_{001}    - t_{010} - t_{011} -
t_{100} - t_{101} - t_{110} - t_{111} \\
f_{1237654} & =& -4 s_{101} - 8 r - 4 s_{001} - 4 s_{100} - 2 t_{010}
\qquad \qquad \qquad  \qquad \qquad \\
\cdots & & \cdots \qquad \qquad \cdots \qquad \qquad \cdots \qquad
\end{matrix}
\end{equation}
Of course, these $135$ linear forms in the $15$ coefficients of $F$
satisfy the $120$ linear trinomials above. The complete list of transformation formulas is posted
on our supplementary materials website.

It is important to note that the change of basis in (\ref{eq:fgTOrst})
is unique up to scalar multiple, since we require it
to be ${\rm Sp}_6(\mathbb{F}_2)$-equivariant, and the 15-dimensional ${\rm Sp}_6(\F_2)$-module appears with multiplicity 1 in the permutation representation given by the G\"opel functions.
Consequently, the expressions in (\ref{eq:fgTOrst}) are not just some random transition maps. 
In finding them explicitly, we resolved an issue that was left open by Coble in \cite[\S IV.49,~(16)]{Cob}.

We now present a completely explicit formula for the map
$\PP^6 \dashrightarrow \mathcal{G}\subset \PP^{134}$.
It factors  through the configuration space of
seven points in $\PP^2$ as follows.
A point $(d_1 : d_2 : \cdots : d_7)$ in $\mathbb{P}^6$ is sent to the configuration
given by the columns of the matrix $D$ in (\ref{eq:threebyseven}).
We specialize the $35$ Pl\"ucker coordinates $[ijk]$
to the $3 \times 3$-minors of $D$. The result is the product
$$ \quad [ijk] \, \, = \,\, (d_i-d_j)(d_i-d_k)(d_j-d_k)(d_i+d_j+d_k) . $$
Under this specialization, the quartic bracket polynomial $Q_i$
for the condition (\ref{eq:pascal3}) that six points lie on a conic
maps to a product of $16$ linear forms.
Substituting these expressions into the G\"opel functions
 (\ref{eq:fano2}) and (\ref{eq:pascal2}), we obtain
a list of $135$ polynomials in $d_1,d_2,\ldots,d_7$,
each of which is a product of $28$ linear forms.
All $135$ such products share a common factor of
degree $21$, namely, $ \prod_{i < j} (d_i-d_j)$. Removing
that common factor, we recover the formulas for
the G\"opel coordinates as products of seven linear forms. For instance, we find
\begin{equation}\label{equation:gopelproduct}
\begin{split}
 f_{1234567}  &= 
(d_1{+}d_2{+}d_4)(d_2{+}d_3{+}d_5)(d_3{+}d_4{+}d_6)(d_4{+}d_5{+}d_7)\\
&(d_5{+}d_6{+}d_1)(d_6{+}d_7{+}d_2)(d_7{+}d_1{+}d_3), \\
g_{7123456}  &= 
(d_1{+}d_2{+}d_7)(d_3{+}d_4{+}d_7)(d_5{+}d_6{+}d_7)\\
&(d_1{-}d_2)(d_3{-}d_4)(d_5{-}d_6)({-}d_1{-}d_2{-}d_3{-}d_4{-}d_5{-}d_6).
\end{split}
\end{equation}
These formulas also appear in \cite[\S 3]{CGL}.

We now introduce variables for  the $63$
reflection hyperplanes of $\mathrm{E}_7$.
Denote
\begin{equation} \label{eq:rootsys}
\begin{split}
x_i &  =  d_i - (d_1 + \cdots + d_7),\\
x_{ij}  & = d_i - d_j, \\
x_{ijk} & =  d_i+d_j+d_k. 
\end{split}
\end{equation}
Then, we obtain
\begin{equation}
\label{eq:monopara}
 \begin{array}{ll}
f_{1234567} = x_{124} x_{137} x_{156} x_{235} x_{267} x_{346} x_{457} , & 
 f_{1234576} = x_{124} x_{136} x_{157} x_{235} x_{267} x_{347} x_{456} ,\\
f_{1234657} = x_{124} x_{137} x_{156} x_{236} x_{257} x_{345} x_{467} ,& 
 f_{1234675} = x_{124} x_{135} x_{167} x_{236} x_{257} x_{347} x_{456},\\
\cdots & 
 f_{1243765} = x_{123} x_{145} x_{167} x_{247} x_{256} x_{346} x_{357}, \\
g_{1234567} = x_{1} x_{23} x_{45} x_{67} x_{123} x_{145} x_{167}, &
g_{1234657} = x_{1} x_{23} x_{46} x_{57} x_{123} x_{146} x_{157}, \\ 
g_{1234756} = x_{1} x_{23} x_{47} x_{56} x_{123} x_{147} x_{156},  &
g_{1243567} = x_{1} x_{24} x_{35} x_{67} x_{124} x_{135} x_{167}, \\
\cdots  & 
g_{7162534} = x_7 x_{16} x_{25} x_{34} x_{167} x_{257} x_{347}.
\end{array}
\end{equation}
Each of these $x$-monomials is squarefree of degree $7$, and
represents one of the $30$  Fano configurations like (\ref{eq:fano}),
or one of the $105$ Pascal configurations like (\ref{eq:pascal}).

\smallskip

This is the moment when we  come to the object promised in the title of this section.
 If we regard $x_i$, $x_{ij}$, $x_{ijk}$ as formal variables, 
 the formulas in \eqref{eq:monopara} define a monomial map
 \[
m \colon \PP^{62}   \dashrightarrow \PP^{134}.
\]
The closure of the image of $m$ is a toric variety $\mathcal{T}$ in $\PP^{134}$.
We call $\mathcal{T}$ the {\em G\"opel toric variety}.

Let $\mathcal{A}$ denote the $63 \times 135$-matrix with entries in $\{0,1\}$
representing the monomial map $m$.
The rows of $\mathcal{A}$ are labeled by the
$63$ parameters $x_i,x_{ij}, x_{ijk}$, or, using Cayley's bijection,
by the $63$ vectors in  $(\mathbb{F}_2)^6 \backslash \{0\}$.
Here we erase all occurrences of  the index ``8'' in
Table~\ref{fig:cayleytable} so as to identify  $(\mathbb{F}_2)^6 \backslash \{0\}$ with the 
subsets of $\{1,2,3,4,5,6,7\}$ that have cardinality $1$, $2$ or $3$.
The columns of the matrix $\mathcal{A}$ are labeled by the
$135$ G\"opel coordinates $f_\bullet, g_\bullet$, and thus
 by the $135$ Lagrangians in  $(\mathbb{F}_2)^6$.
Each row of $\mathcal{A}$ has $15$ entries $1$; the other $120$ entries are $0$.
 Each  column of  $\mathcal{A}$ has $7$ entries $1$; the other $56$ entries are $0$.
 By computing the Smith normal form, one checks that 
 our $63 \times 135$-matrix $\mathcal{A}$ has rank $36$ over any field.
 
The convex hull of the columns of  $\mathcal{A}$
is a convex polytope of dimension $35$ in $\R^{63}$. We denote this polytope also by $\mathcal{A}$
and we call it the {\em G\"opel polytope}. 
The G\"opel polytope $\mathcal{A}$ has 135 vertices and 63 distinguished facets, each given by the vanishing of one of the coordinates in the ambient $\R^{63}$. However, these are not all of the facets of $\mathcal{A}$. In fact, it is a difficult computational problem, which we could not solve, to determine the number of facets of $\mathcal{A}$.
Each distinguished facet contains $135-15= 120$ of the vertices, and is
 indexed by a vector $v \in (\mathbb{F}_2)^6 \backslash \{0\}$.
Thus each distinguished facet of $\mathcal{A}$ corresponds combinatorially to the complement of a Lagrangian in $ (\mathbb{F}_2)^6 \backslash \{0\}$.  These facets are indexed by the $63$ $x$-variables.

By construction, the G\"opel polytope $\mathcal{A}$ is the
polytope associated with the projective toric variety $\mathcal{T}$. We summarize the above discussion:

\begin{theorem} 
\label{thm:toricvariety}
The G\"opel toric variety $\mathcal{T}$ has dimension $35$ in $\PP^{134}$. 
It has  $63$ distinguished boundary divisors, each given by the vanishing of one parameter $x_i$, $x_{ij}$, or $x_{ijk}$.
\end{theorem}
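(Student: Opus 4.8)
The plan is to read both assertions off the $63\times 135$ incidence matrix $\mathcal{A}$, using the standard dictionary between a projective toric variety and its point configuration. Since every column of $\mathcal{A}$ has exactly seven ones, all $135$ columns lie on the affine hyperplane $\{\sum_v y_v=7\}\subset\R^{63}$, which misses the origin; hence $\dim\mathcal{T}$ equals the dimension of the affine span of the columns, which is $\mathrm{rank}_{\mathbb{Q}}(\mathcal{A})-1$. We already recorded, via the Smith normal form, that $\mathcal{A}$ has rank $36$ over every field, so $\dim\mathcal{T}=35$. (Equivalently, $(1,\dots,1)\in\R^{135}$ lies in the row span of $\mathcal{A}$, as one sees by taking all row coefficients equal to $1/7$, so the projective and affine dimension counts agree.)

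For the distinguished divisors, fix one of the $63$ parameters, i.e.\ a nonzero vector $v\in(\F_2)^6$. Exactly $15$ of the $135$ Lagrangians contain $v$ (from $135\cdot 7=63\cdot 15$), so setting $x_v=0$ in the monomials \eqref{eq:monopara} annihilates precisely the $15$ G\"opel coordinates indexed by those Lagrangians and leaves the other $120$ as squarefree monomials in the remaining variables. Hence the image $D_v$ of the coordinate hyperplane $\{x_v=0\}\subset\PP^{62}$ lies in the coordinate $\PP^{119}\subset\PP^{134}$ cut out by those $15$ coordinates, and $D_v$ is the projective toric variety of the submatrix $\mathcal{A}_v$ of $\mathcal{A}$ obtained by deleting row $v$ and the $15$ columns meeting it; in particular $D_v$ avoids the dense torus orbit of $\mathcal{T}$. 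The same dimension formula gives $\dim D_v=\mathrm{rank}(\mathcal{A}_v)-1$, so it remains to show $\mathrm{rank}(\mathcal{A}_v)=35$. One inequality is free: every combination of the $120$ surviving columns is zero in row $v$, while the $15$ deleted columns are not, so the deleted columns are not spanned by the surviving ones and $\mathrm{rank}(\mathcal{A}_v)\le\mathrm{rank}(\mathcal{A})-1=35$.

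I expect the reverse inequality $\mathrm{rank}(\mathcal{A}_v)\ge 35$ --- equivalently, that $\{y_v=0\}$ is a genuine facet of the G\"opel polytope and not a lower-dimensional face --- to be the one point requiring real work, since deleting $15$ columns could in principle drop the rank by much more than one. It can be verified by a single explicit rank (or Smith normal form) computation for one representative $v$: the group $W(\mathrm{E}_7)\cong\mathrm{Sp}_6(\F_2)\times\{\pm1\}$ acts on $\mathcal{T}$ by signed coordinate permutations and permutes the $63$ parameters $x_v$ transitively, so all $\mathcal{A}_v$ have the same rank. A structural proof would instead show that the $15$ Lagrangians through $v$ are mutually congruent modulo the linear span of the $120$ Lagrangians avoiding $v$, using the action of the stabilizer of $v$ on the symplectic space $(\F_2)^6$. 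Granting this, each $D_v$ is a codimension-one subvariety of $\mathcal{T}$ inside its boundary, i.e.\ a boundary divisor; and the $63$ of them are pairwise distinct because for $v\neq v'$ some Lagrangian contains $v$ but not $v'$, so the $D_v$ lie in distinct coordinate subspaces. These are the $63$ distinguished boundary divisors of the statement.
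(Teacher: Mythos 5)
Your argument is correct and runs along the same lines as the paper's: the dimension comes from $\mathrm{rank}(\mathcal{A})=36$ (which the paper checks via Smith normal form) together with the observation that the columns lie on the affine hyperplane $\sum_v y_v=7$, and the $63$ distinguished divisors correspond to the coordinate faces $\{y_v=0\}$ of the G\"opel polytope. You correctly isolate the one non-automatic point, namely the inequality $\mathrm{rank}(\mathcal{A}_v)\ge 35$, which is what makes $\{y_v=0\}$ a facet and $D_v$ a divisor rather than something of higher codimension. The paper's text does not elaborate on this either — it is asserted in the phrase "$63$ distinguished facets," in the same computational spirit as the rank-$36$ check — and, as you say, by $W(\mathrm{E}_7)$-transitivity on the $63$ parameters it suffices to verify it once. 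Your sketch of a structural proof can be made precise as follows. Since $W(\mathrm{E}_7)$ permutes the $63$ rows of $\mathcal{A}$ transitively and $\mathrm{rank}(\mathcal{A})=36<63$, no single row is needed to generate the $36$-dimensional row space $V\subset\R^{135}$; hence $\mathrm{rank}(\mathcal{A}_v)=36-\dim(V\cap\ker\pi)$, where $\pi\colon\R^{135}\to\R^{120}$ kills the $15$ coordinates of Lagrangians through $v$. The kernel $V\cap\ker\pi$ visibly contains the line $\R\cdot\mathrm{row}_v$, so what remains is to show it equals that line. This is a one-dimensionality statement for the space of vectors in $V$ supported on the $15$ Lagrangians through $v$, on which the stabilizer of $v$ acts transitively (as $\mathrm{Sp}_4(\F_2)$ on the Lagrangians of $v^\perp/\langle v\rangle$); that transitivity is exactly the ingredient your "mutually congruent modulo the span of the $120$" formulation would exploit.
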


The ideal for the embedding $\mathcal{T} \subset \PP^{134}$
is the {\em toric ideal} associated with the matrix $\mathcal{A}$. We computed all minimal generators of this ideal of degree at most $4$ by writing all products of at most $4$ G\"opel functions in terms of $x_i$, $x_{ij}$, or $x_{ijk}$ and comparing them.
There are no linear forms or quadrics in the toric ideal of $\mathcal{A}$. There are $630$ cubic binomials of the form
\begin{equation}
\label{eq:cubibino}
g_{5132467} g_{6123457} f_{1235746} - f_{1236745} g_{5123467} g_{6132457}.
\end{equation}
They form a single orbit under the ${\rm Sp}_6(\F_2)$-action by permuting G\"opel coordinates. Further, there are precisely $12285$ quartic binomials that are not in the ideal generated by the $630$ cubics.
These form two  ${\rm Sp}_6(\F_2)$-orbits of sizes 945 and 11340.
Representatives for these two orbits are, respectively,
\begin{equation}
\label{eq:quarbino}
\begin{matrix}
 f_{1234567} f_{1234675} g_{1243657} g_{2143756} & - & f_{1234576} f_{1234765} g_{1243756} g_{2143657},\\
f_{1234576} f_{1234765} f_{1235647} f_{1236457} & - & f_{1234675} f_{1234756} f_{1235467} f_{1236547}  .
 \end{matrix}
  \end{equation}

It would be desirable to find a {\it Markov basis} of $\mathcal{T}$, i.e. a set of minimal generators of the toric ideal $I_{\mathcal{A}}$ of
the matrix $\mathcal{A}$. This turns out to be nontrivial. In fact, we need binomials of up to degree at least $6$ to generate the toric ideal. We constructed examples of binomials of degree $5$ and $6$
that lie in $I_{\mathcal{A}}$ but are not generated by elements of lower degrees:
$$ \begin{matrix}
f_{1234576}g_{2163745}g_{4132657}g_{6142537}g_{7132456} & -& 
f_{1236574}g_{2143756}g_{4162537}g_{6132457}g_{7132645} \\
f_{1234576}f_{1243657}g_{2153467}g_{3152746}g_{4162537}g_{6123547} & -&
 f_{1236475}f_{1243576}g_{2153746}g_{3162547}g_{4123567}g_{6152734}
 \end{matrix}
 $$
Both of these binomials are {\it indispensable}: they represent two-element
fibers of the semigroup map $\mathcal{A} \colon \mathbb{N}^{135} \rightarrow \mathbb{N}^{63}$.
This implies  that they must appear in every
Markov basis of $I_{\mathcal{A}}$. 

We also do not know whether $\mathcal{T}$ is
projectively normal, or arithmetically Cohen--Macaulay. If this holds then $\mathcal{T}$ is the toric variety of
the polytope $\mathcal{A}$ in the strict sense of \cite[\S 2.3]{CLS}.
Also we do not know the Hilbert polynomial (or Ehrhart polynomial).
At the present time, our $63 \times 135$-matrix $\mathcal{A}$ seems too big
for the packages {\tt 4ti2} (for computing Markov bases),
{\tt polymake} (for computing facets),
 and {\tt normaliz} (which is discussed in \cite[Appendix B.3]{CLS}).
 
Now we are ready to connect the map \eqref{eqn:gopelmap} with the map \eqref{eqn:param:weyl}.

\begin{theorem}
\label{thm:135}
The G\"opel variety $\mathcal{G}$ in $\PP^{14}$ is linearly isomorphic to the closure of the image of the map  $(\PP^2)^7 \dashrightarrow \PP^{134}$ 
described prior to \eqref{eqn:gopelmap}, which is the (ideal-theoretic)
 intersection of $\mathcal{T}$ with the linear space
 $L \simeq \PP^{14}$ in~(\ref{eq:7to14to134}).
Its prime ideal in the polynomial ring in $135$ unknowns
$f_{\bullet{}}$ and $g_{\bullet{}}$ is minimally
generated by $120$ linear trinomials,
$35$ cubic binomials, and $35$ quartic binomials.
\end{theorem}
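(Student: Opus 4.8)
The plan is to transport Theorem~\ref{thm:gopel} through the explicit $\mathrm{Sp}_6(\F_2)$-equivariant change of coordinates (\ref{eq:fgTOrst}). Those formulas assemble into a linear map $\ell\colon \PP^{14}\hookrightarrow\PP^{134}$, $(r{:}s_\bullet{:}t_\bullet)\mapsto(f_\bullet{:}g_\bullet)$, which is equivariant; since $U^{15}$ occurs with multiplicity one in the permutation module on the $135$ Lagrangians, $\ell$ is the unique such map up to scalar, it is injective, and its image is the linear space $L$ of (\ref{eq:7to14to134}). Precomposing the G\"opel map $(\PP^2)^7\dashrightarrow\PP^{134}$ with the birational morphism $D$ of (\ref{eq:threebyseven}) and observing that $\ell$ splits the projection $L\to\PP^{14}$, the computation in the proof of Theorem~\ref{thm:24to1} identifies this composite with $\ell\circ\gamma$; passing to closures and using that $D$ is dominant onto configuration space gives that the closure of the image of $(\PP^2)^7\dashrightarrow\PP^{134}$ equals $\ell(\mathcal{G})$. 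In particular $\mathcal{G}$ is linearly isomorphic to that image, and the monomial parametrization (\ref{eq:monopara}) shows the image lies in $\mathcal{T}$, while the trinomials show it lies in $L$.

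For the ideal I would work in $R=\Q[f_\bullet,g_\bullet]$ and use that the kernel of the surjective ring map $\ell^\#\colon R\to\Q[r,s_\bullet,t_\bullet]$ is the ideal $I_L$ generated by the $120$ independent linear trinomials (the ideal of a linear subvariety is generated in degree one). Then $R/I_L\cong\Q[r,s,t]$, one has $I(\ell(\mathcal{G}))=(\ell^\#)^{-1}(I(\mathcal{G}))$, and since $I_L$ is generated by a regular sequence of linear forms, a standard comparison of minimal generators shows that $I(\ell(\mathcal{G}))$ is minimally generated by the $120$ trinomials together with lifts of the minimal generators of $I(\mathcal{G})$ --- that is, by $120$ linear forms, $35$ cubics, and $35$ quartics, using that $\mathcal{G}$ is nondegenerate in $\PP^{14}$ so there are no extra linear or quadratic generators. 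The content of the theorem is that the cubic and quartic lifts can be taken to be binomials from the toric ideal $I_\mathcal{T}$.

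To produce the binomial lifts, note that the $630$ cubic binomials (\ref{eq:cubibino}) all lie in $I_\mathcal{T}\subseteq I(\ell(\mathcal{G}))$, so $\ell^\#$ carries them into the degree-$3$ part of $I(\mathcal{G})$, which is the irreducible $35$-dimensional $\mathrm{Sp}_6(\F_2)$-module spanned by the cubic generators (Theorem~\ref{thm:gopel}). Their image is an equivariant submodule; it is nonzero (equivalently, not all $630$ binomials vanish on $L$, which one substitution settles), hence equals the whole module, and choosing $35$ of the binomials whose $\ell^\#$-images form a basis gives the cubic half of the generating set. The quartic half is obtained identically from the $12285$ quartic binomials not generated by the cubics: their $\ell^\#$-images, read modulo the degree-$4$ piece of the cubic-generated ideal, lie in the other irreducible $35$-dimensional module (the quartic generators of $I(\mathcal{G})$), are nonzero, and therefore span it. Since every generator so obtained lies in $I_\mathcal{T}+I_L$, while $\ell(\mathcal{G})\subseteq\mathcal{T}\cap L$ gives $I_\mathcal{T}+I_L\subseteq I(\ell(\mathcal{G}))$, we conclude $I(\ell(\mathcal{G}))=I_\mathcal{T}+I_L$; that is, $\ell(\mathcal{G})$ is the ideal-theoretic intersection $\mathcal{T}\cap L$. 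Minimality of $120+35+35$ follows from the comparison of minimal generators above together with Theorem~\ref{thm:gopel}.

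The main obstacle is the matching carried out in the third paragraph: one must be sure that the toric cubic (resp.\ quartic) binomials really do surject, via $\ell^\#$, onto the correct $35$-dimensional representation rather than landing in a proper, hence (by irreducibility) zero, submodule. I would settle this, and indeed verify the whole theorem, by an explicit {\tt Macaulay 2} computation: compute the transformation formulas (\ref{eq:fgTOrst}), form the ideal generated by the $120$ trinomials and the $35$ chosen cubic and $35$ chosen quartic binomials, and check that it coincides with the ideal obtained by eliminating $d_1,\dots,d_7$ from the parametrization (\ref{equation:gopelproduct}) of the $f_\bullet,g_\bullet$. Representation theory is then only needed to supply the multiplicity-one statements used for the linear isomorphism and for recognizing the two $35$-dimensional irreducibles.
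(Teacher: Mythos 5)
Your proposal follows the same overall route as the paper: both use the explicit $\mathrm{Sp}_6(\F_2)$-equivariant change of coordinates (\ref{eq:fgTOrst}) to transport Theorem~\ref{thm:gopel} into $\PP^{134}$ and then reduce the theorem to the claim that the cubic and quartic generators can be represented by toric binomials modulo the $120$ linear trinomials. The genuine refinement in your argument is at the spanning step: the paper simply asserts, as a verified computational fact, that the $\ell^\#$-images of the $630$ cubic binomials (resp.\ the degree-four piece coming from the cubic and quartic binomials) span the $35$-dimensional cubic (resp.\ quartic) space of minimal generators of $I(\mathcal{G})$, whereas you observe that since those $35$-dimensional spaces are irreducible $\mathrm{Sp}_6(\F_2)$-modules (a fact the paper records in the proof of Theorem~\ref{thm:gopel} but does not reuse here) and the images are equivariant subspaces, each spanning claim reduces by Schur's lemma to a single nonvanishing check. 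Your accounting of minimal generators via the kernel $I_L$ of $\ell^\#$ is also spelled out more carefully than in the paper, and the argument there is sound: because $I_L$ is generated by linear forms and $I(\mathcal{G})$ has no linear or quadratic generators, the count $120+35+35$ follows from the standard graded local comparison $\mu(J)=\mu(I_L)+\mu(J/I_L)$. Both you and the paper still rely on the foundational verification that composing (\ref{eq:fromctod}), (\ref{eqn:param:weyl}) and (\ref{eq:fgTOrst}) recovers (\ref{equation:gopelproduct}), so neither argument is computation-free; your version simply trades a rank computation for a representation-theoretic one-liner.
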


\begin{proof}
The composition of the maps (\ref{eq:fromctod}), (\ref{eqn:param:weyl}) and (\ref{eq:fgTOrst}) is exactly to (\ref{equation:gopelproduct}). Thus, the transformation (\ref{eq:fgTOrst}) defines an isomorphism $L \simeq
\mathbb{P}^{14}$ that is compatible with the rational map $(\mathbb{P}^2)^7\dashrightarrow{}L$ in
 \eqref{eq:7to14to134} and the rational map $(\PP^2)^7 \dashrightarrow \mathcal{G} \subset \PP^{14}$ in
 \eqref{eqn:param:weyl}.
Therefore, the image of our map $(\PP^2)^7 \dashrightarrow \PP^{134}$ in \eqref{eqn:gopelmap} is linearly isomorphic to the G\"opel variety~$\mathcal{G}$. Moreover, under this projective transformation, the
ideal of the image of the rational map $(\mathbb{P}^2)^7\dashrightarrow{}\mathbb{P}^{134}$ in $L$
is mapped to the ideal of $\mathcal{G}$ in $\mathbb{P}^{14}$, which is the
Gorenstein prime ideal generated by the
$35$ cubics and $35$ quartics in Theorem \ref{thm:gopel}. 

It remains to be shown that the cubic and quartic generators
can be represented by binomials modulo the $120$ trinomials. Indeed, if we take the $630$ cubic binomials described above and write them in terms of $r,s_{\dot{}},t_{\dot{}}$ using (\ref{eq:fgTOrst}), they span exactly the $35$-dimensional vector space of cubics in the ideal of $\mathcal{G} \subset \PP^{14}$. Moreover, if we take the quartics generated by the $630$ cubic binomials and the $12285$ quartic binomials described above and write them in terms of $r,s_{\dot{}},t_{\dot{}}$, they span exactly the vector space of quartics in the ideal of $\mathcal{G} \subset \PP^{14}$. Therefore, the image under \eqref{eq:fgTOrst} of the ideal they generate coincides with the ideal of
 $\mathcal{G} \subset \PP^{14}$ in Theorem~\ref{thm:gopel}.
\end{proof}

\section{The universal Coble quartic in $\PP^7\times\PP^7$} \label{sec:univcoble}

Recall that for each $\tau\in\mathfrak{H}_3$ corresponding to a smooth non-hyperelliptic curve of genus three, the Coble quartic $C_\tau$ is the unique hypersurface of degree $4$ in $\PP^7$ whose singular locus is the Kummer threefold corresponding to that curve. The
{\it universal Coble quartic} $\mathcal{C}(2,4)$ is the Zariski closure in $\mathbb{P}^7\times{}\mathbb{P}^7$ of the set of  pairs 
$(\mathbf{u},\mathbf{x})$ where $\mathbf{u}\in \vartheta(\mathcal{A}_3(2,4))$ corresponds to a non-hyperelliptic curve  and $\mathbf{x}$ lies in the Coble quartic hypersurface corresponding to $\mathbf{u}$. In this section we derive generators for the ideal of $\mathcal{C}(2,4)$ in the theta coordinates $\mathbf{u}$ and $\mathbf{x}$. 

We shall derive formulas for the coefficients $r,s_\bullet,t_\bullet$ of the 
quartic polynomial $F_\tau$ in (\ref{eq:coblequartic})
in terms of the second order theta constants $\mathbf{u}$. 
Our formulas are fairly big, and they
define an explicit rational map of degree $64$
from the Satake hypersurface to the G\"opel variety:
\begin{equation}
\label{eq:64to1revisited}
\mathcal{S} \buildrel{64:1}\over{\dashrightarrow} \mathcal{G}.
\end{equation}
This is realized concretely as a map $\PP^7 \dashrightarrow \PP^{14}$
by listing $15$ polynomials 
 $r(\mathbf{u}),s_\bullet(\mathbf{u}),t_\bullet(\mathbf{u})$ in the variables
 ${\bf u} = (u_{000}, \ldots, u_{111})$.
It agrees with the abstract map (\ref{eq:64to1})
over the non-hyperelliptic locus
of $\mathcal{M}_3(2,4)$ and collapses each of the $36$ components
of the hyperelliptic locus to a single point (see, for example, \eqref{eq:squareofquadric}).
 We also determine the zero set of these polynomials, which is
the indeterminacy locus of (\ref{eq:64to1revisited})
in the hypersurface  $\mathcal{S} = \overline{\vartheta(\mathcal{A}_3(2,4))}$.

 The following theorem summarizes the results to be proved in this section:

\begin{theorem} \label{thm:thetacoble}
\begin{compactenum}[\rm (a)]
\item The coefficients $r, s_\bullet, t_\bullet$ of the
 Coble quartic can be expressed as polynomials of degree $28$ in the eight theta constants $\mathbf{u}$. 
 The resulting polynomial $F$ from (\ref{eq:coblequartic})
is the sum of $372060$ monomials  of bidegree $(28,4)$ in $({\bf u},{\bf x})$.
\item The locus in the Satake hypersurface $\mathcal{S}$ that is cut out by the coefficients
$r, s_\bullet, t_\bullet$  equals the Torelli boundary
 ${\rm Sing}(\mathcal{S}) =\overline{\vartheta(\mathcal{A}_3(2,4))}\setminus \vartheta(\mathcal{M}_3(2,4))$
 \hfill
(cf.~Proposition \ref{prop:torellibdr}).
 \item The prime ideal of the universal Coble quartic 
equals $\langle F, \mathcal{S} \rangle$,
where $\mathcal{S}$ is the Satake polynomial of bidegree $(16,0)$.  Hence 
$\mathcal{C}(2,4)$  is a complete intersection of codimension~$2$ in  $\PP^7\times\PP^7$, and its bidegree equals $16U(28U+4X)$.
Here we write the cohomology ring of $\PP^7 \times \PP^7$ as
 $\mathbb{Z}[U,X]/\langle U^7, X^7\rangle$, with $U$ and $X$ representing the two hyperplane classes.
\end{compactenum}
\end{theorem}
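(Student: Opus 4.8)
The plan is to prove the three parts of Theorem~\ref{thm:thetacoble} in sequence, with part (a) being essentially a computation, part (b) a short geometric argument reducing to the work already done in Proposition~\ref{prop:torellibdr}, and part (c) the heart of the matter.

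For part (a), I would produce the degree-$28$ formulas for $r,s_\bullet,t_\bullet$ in the ${\bf u}$-coordinates by composing two maps that are already available: the degree-$64$ map $\mathcal{S}\dashrightarrow\mathcal{G}$ in $\PP^7\dashrightarrow\PP^{14}$ coming from the Fourier--Jacobi/theta relations (this is the map \eqref{eq:64to1revisited}), and then expand $F$ via \eqref{eq:coblequartic}. Concretely, one knows the theta constants $\theta[\epsilon|\epsilon'](\tau;0)$ vanish on the $36$ hyperelliptic hyperplanes, and the Coble coefficients must be, up to scale, the unique ${\rm Sp}_6(\F_2)$-equivariant lift of the $15$-dimensional module $U^{15}$ into ${\rm Sym}^{28}({\bf u})$ that is divisible by the appropriate products of hyperelliptic quadrics \eqref{thetasquares2}. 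Since the relevant isotypic component has multiplicity one, this pins the formulas down, and then one simply verifies by numerical evaluation of the Riemann theta function at many $\tau_i\in\mathfrak{H}_3$ that the candidate polynomials reproduce the coefficients of the Coble quartic. The count of $372060$ monomials is then read off from the explicit expression; this is the routine-but-bulky step.

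For part (b), I would argue as follows. The indeterminacy locus of \eqref{eq:64to1revisited} is the common zero set $Z=V(r(\mathbf{u}),s_\bullet(\mathbf{u}),t_\bullet(\mathbf{u}))\cap\mathcal{S}$. Over the non-hyperelliptic locus $\vartheta(\mathcal{M}_3(2,4))$ the Coble quartic is a genuine quartic with nonzero coefficients, so $Z$ is disjoint from it; hence $Z$ is contained in the union of the hyperelliptic locus and the Satake boundary. On a hyperelliptic point the Coble quartic degenerates to the square of a quadric (this is \eqref{eq:squareofquadric}), so a priori it is not clear that all $15$ coefficients vanish there; what I would instead use is Proposition~\ref{prop:torellibdr}: the Torelli boundary $\mathrm{Sing}(\mathcal{S})$ is the union of $336$ Segre varieties $\PP^1\times\PP^3$ each defined by the $2\times 2$ minors of \eqref{eq:2x4matrix} (and its modular translates). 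One checks by a {\tt Macaulay~2} computation (in the supplementary files) that each $r(\mathbf{u}),s_\bullet(\mathbf{u}),t_\bullet(\mathbf{u})$ lies in the ideal of $2\times 2$ minors of \eqref{eq:2x4matrix}, so $\mathrm{Sing}(\mathcal{S})\subseteq Z$; conversely, away from $\mathrm{Sing}(\mathcal{S})$ the point $\vartheta(\tau)$ represents an indecomposable threefold, which is the Jacobian of a smooth curve, and if that curve is non-hyperelliptic the coefficients do not all vanish, while if it is hyperelliptic one invokes that the hyperelliptic locus inside $\mathcal{S}$ is contained in the closure of the Torelli boundary (or directly that the degenerate Coble quartic being a perfect square forces, via the explicit formula of part (a), all $r,s_\bullet,t_\bullet$ to vanish on that component, collapsing it to a point). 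Either way $Z=\mathrm{Sing}(\mathcal{S})=$ Torelli boundary.

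Part (c) is the main obstacle, and the strategy is the same ``prime = irreducible + generically reduced + complete intersection'' argument used in the proof of Theorem~\ref{thm:gopel} and Corollary~\ref{cor:gucq}. First, $\mathcal{S}$ has bidegree $(16,0)$ and $F$ has bidegree $(28,4)$; since $F$ involves the ${\bf x}$-variables and $\mathcal{S}$ does not, $F$ is not a zerodivisor modulo $\mathcal{S}$, so $V(\mathcal{S},F)$ is a complete intersection of codimension $2$ in $\PP^7\times\PP^7$, hence Cohen--Macaulay and equidimensional of dimension $12$. Its bidegree is $(16U)(28U+4X)$ by B\'ezout in $\Z[U,X]/\langle U^7,X^7\rangle$. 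It then suffices to show $V(\mathcal{S},F)$ is irreducible and generically reduced, for then the ideal $\langle\mathcal{S},F\rangle$ is prime and equals the ideal of $\mathcal{C}(2,4)$. For irreducibility: project to the first factor; over the non-hyperelliptic locus $\mathcal{U}\subset\mathcal{S}$ the fiber of $V(\mathcal{S},F)$ is the genuine Coble quartic hypersurface in $\PP^7$, which is irreducible of dimension $6$, so $\overline{\pi^{-1}(\mathcal{U})}$ is irreducible of dimension $12$; the complement lies over $\mathcal{S}\setminus\mathcal{U}$, which has dimension $\le 5$, and even though over the hyperelliptic locus $F$ is a square of a quadric (so the fiber is that quadric hypersurface, still of dimension $6$) the total preimage there has dimension $\le 5+6=11<12$, so it cannot be a component; hence $V(\mathcal{S},F)$ is irreducible. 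For generically reduced: the Jacobian of $(\mathcal{S},F)$ at a point $(\mathbf{u},\mathbf{x})$ with $\mathbf{u}\in\mathcal{U}$ a smooth point of $\mathcal{S}$ and $\mathbf{x}$ a smooth point of the corresponding Coble quartic is block-triangular with the block $\partial\mathcal{S}/\partial\mathbf{u}\ne 0$ and the block $\partial F/\partial\mathbf{x}\ne 0$, hence rank $2$; so $V(\mathcal{S},F)$ is smooth (in particular reduced) at such a point, which is a general point. By \cite[Exercise 11.10]{eisenbud} (as in the proof of Theorem~\ref{thm:gopel}) the ideal $\langle\mathcal{S},F\rangle$ is radical, and being irreducible it is prime; equality with the ideal of $\mathcal{C}(2,4)$ follows since both cut out the same irreducible variety and the left side is prime. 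The hardest practical point is verifying part (a) — producing and certifying the $372060$-term polynomial $F$ — since the algebra is large; the geometric arguments in (b) and (c) are short once the explicit formulas of (a) and Proposition~\ref{prop:torellibdr} are in hand.
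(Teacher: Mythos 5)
Your overall strategy for part (c) is sound and matches the paper's: realize $\mathcal{C}'=V(\mathcal{S},F)$ as a codimension-2 complete intersection (hence Cohen--Macaulay and equidimensional), show it is irreducible by a dimension count over the non-hyperelliptic locus plus its complement, then show it is generically reduced via the block-triangular Jacobian, and invoke CM $\Rightarrow(S_1)$ so that generically reduced $\Rightarrow$ reduced, hence prime. The small imprecision is in the irreducibility count: you write ``$5+6=11$'' as though all of $\mathcal{S}\setminus\mathcal{U}$ has $6$-dimensional fibers, but over the Torelli boundary (where all of $r,s_\bullet,t_\bullet$ vanish by part (b)) the fiber is $\PP^7$, i.e.\ dimension $7$. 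The paper handles this cleanly by observing that a full $\PP^7$-fiber forces all $r,s_\bullet,t_\bullet$ to vanish, and by part (b) that locus has dimension only $4$, so $4+7=11<12$. Your argument can be repaired by splitting $\mathcal{S}\setminus\mathcal{U}$ by dimension and fiber dimension, but as written it omits the one case (a $5$-dimensional subvariety with $7$-dimensional fibers) that the paper must rule out; it is ruled out only because of part (b).

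Part (b) has genuine errors. First, the a priori containment you assert --- that $Z=V(r,s_\bullet,t_\bullet)\cap\mathcal{S}$ sits inside the hyperelliptic locus union the Satake boundary --- omits the Torelli boundary (the image of decomposable abelian threefolds), which is precisely the answer and is \emph{not} contained in either of those two loci. Second, your alternatives for the hyperelliptic case are both false: the hyperelliptic locus has codimension $1$ in $\mathcal{S}$ while the Torelli boundary has codimension $2$, so the former cannot lie in the closure of the latter; and the coefficients $r,s_\bullet,t_\bullet$ do \emph{not} all vanish on the hyperelliptic locus. Rather, as equation \eqref{eq:squareofquadric} shows, on each hyperelliptic component the Coble quartic degenerates to a nonzero scalar times the square of a quadric such as $\sum x_\sigma^2$; expanding this against the basis in \eqref{eq:coblequartic} gives $r\ne 0$, $s_\sigma\ne 0$, $t_\sigma=0$. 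The map \eqref{eq:64to1revisited} collapses each hyperelliptic component to a \emph{point} in $\mathcal{G}$ because the image is constant there, not because the coefficients vanish. The paper's actual argument for (b) is a careful two-directional case analysis using the explicit formulas \eqref{equation:coble_r}--\eqref{equation:coble_t}, Riemann's theta relations \eqref{eqn:riemannthetarel}, and Glass's criterion (six vanishing even theta constants), including an azygetic/non-azygetic distinction; your proposal replaces all of this with a {\tt Macaulay~2} check for one inclusion and an incorrect argument for the other.

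Part (a) takes a genuinely different route from the paper but is underdeveloped. You propose pinning down $r({\bf u}),s_\bullet({\bf u}),t_\bullet({\bf u})$ as the (claimed) unique $\mathrm{Sp}_6(\F_2)$-equivariant lift of $U^{15}$ into $\mathrm{Sym}^{28}({\bf u})$ with specified divisibility, then confirming numerically. The multiplicity-one claim for $U^{15}$ in $\mathrm{Sym}^{28}$ of eight variables is not established and is not obviously true; and even granting it, computing the $372060$-term polynomial and certifying it still requires the explicit construction. The paper instead derives the formulas directly: restrict the Coble quartic to the eight fixed $3$-planes $H_\epsilon^\pm$ of the $2$-torsion involutions, identify each restriction as a Kummer surface with a known singular point $p_\epsilon=\kappa_\tau(\epsilon/4)$, extract the coefficients as $4\times 4$-minors $L_i^\epsilon$ of the genus-$2$ matrix \eqref{universalk2}, and then convert from first-order theta constants to second-order ones via the addition theorem \eqref{eq:additioninv} and Riemann's relation \eqref{eqn:riemannthetarel}. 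That derivation is what actually produces the polynomials and their structured factorizations \eqref{equation:coble_r}--\eqref{equation:coble_t}, which are then used in the proof of (b). If you wish to keep the representation-theoretic shortcut in (a), you would at minimum need to verify the multiplicity-one claim and note that the count of $372060$ monomials still requires the explicit expansion.
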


We note that, in response to this theorem, Grushevsky and Salvati Manni \cite{gsnew} developed a conceptual geometric approach that yields a shorter representation of the same polynomial.

\begin{proof}
(a) We will construct $15$ polynomials of degree $28$ in the eight unknowns $u_{000},\ldots,u_{111}$.
The polynomial $r({\bf u})$ has  $5360$ terms,
each polynomial $s_{ijk}({\bf u})$ has $7564$ terms when $i+j+k \ne 2$ and has $7880$ terms otherwise, and each polynomial $t_{ijk}({\bf u})$ has
$8114$ terms. They are available on our supplementary materials website.

We begin by outlining the derivation of these polynomials.
Consider the Coble quartic $C_\tau$ corresponding to some  non-hyperelliptic $\tau$.
Let $\epsilon$ be a non-zero length $3$ binary vector, so $\epsilon/2 \in A_\tau[2]$ represents a 2-torsion point on the abelian threefold $A_\tau$.    As in the proof of Theorem \ref{thm:gopel}, we write $H^{\pm}_\epsilon$ for the two three-dimensional fixed spaces of the involution
  (\ref{eq:involution}) on $\mathbb{P}^7$ defined by 
$\Theta_2[\sigma](\tau;z) \mapsto \Theta_2[\sigma](\tau;z+\epsilon/2)$.
The fixed space $H_\epsilon^+$ is given by
\begin{equation}
\label{eq:fixedspace}
x_{\delta{}}=0, \qquad  \delta{}\notin\epsilon^\perp,
\end{equation}
where the orthogonal complements are taken with respect to the usual Euclidean form on $(\mathbb{F}_2)^3$.
On the subspace $H_\epsilon^+ \simeq \PP^3$ we use the homogeneous coordinates
\[
(x_{000}:x_{\delta_1}:x_{\delta_2}:x_{\delta_3}), \qquad \delta_i\in\epsilon^\perp.
\]
Substituting (\ref{eq:fixedspace}) into $F_\tau$,
 we get the equation of a Kummer surface $K_2^{\epsilon,+} = C \cap H^+_\epsilon$:
\begin{equation}
\label{intersect} \qquad
r\cdot\sum_{\sigma\in\epsilon^\perp}x^4_\sigma+\sum_{\sigma\in \epsilon^\perp}s_\sigma\cdot m_\sigma +t_\epsilon\prod_{\sigma\in\epsilon^\perp}x_\sigma=0
\qquad \hbox{where} \quad
m_\sigma=\frac{1}{2}\sum_{\nu\in\epsilon^\perp}x^2_{\nu}x^2_{\nu+\sigma}.
\end{equation}
Its singular locus is $H_\epsilon^+\cap \mathrm{Sing}(C_\tau)$. We claim that one of the $16$ singular points of $K_2^{\epsilon,+}$ is
\begin{equation}
\label{eq:peps}
p_\epsilon\,\,=\,\,
\kappa{}_{\tau{}}(\frac{\epsilon}{4})\,\,=\,
\,\bigl(\Theta_2[000](\tau; \frac{\epsilon}{4}) : \Theta_2[001](\tau;\frac{\epsilon}{4}) : 
\,\cdots\, : \Theta_2[111](\tau; \frac{\epsilon}{4}) \bigr).
\end{equation}
Since this point lies in the image of the Kummer map $\kappa_\tau$, it is by definition in the Kummer 
threefold $K_3 = \kappa_\tau(A_\tau) = {\rm Sing}(C_\tau)$. That $p_\epsilon$ is in the fixed space $H_\epsilon^+$ can be seen from the transformation properties of the second order theta functions under $A_\tau[2]$.   Indeed,
\begin{equation}
\quad \Theta_2[\delta{}](\tau; \frac{\epsilon}{4}) \,\,\, = \,\,\,
\theta \bigl(2\tau; \frac{\epsilon{}}{2}+\tau\delta \bigr) \cdot \exp \left[ \pi i\left(\frac{\delta^t\tau\delta}{2}+2\delta^t z \right) \right]
\qquad
\hbox{ for any $\delta{}\in{}(\mathbb{F}_2)^3$.}
\end{equation}
Furthermore,
\begin{align*}
\theta \bigl(2\tau; \frac{\epsilon{}}{2}+\tau\delta \bigr) \,&=\, \theta \bigl(2\tau; -(\frac{\epsilon{}}{2}+\tau\delta) \bigr) = \,\theta \bigl(2\tau; (\frac{\epsilon{}}{2}+\tau\delta)-\epsilon{}-(2\tau{})\delta{} \bigr)\\
&= \,\theta \bigl( 2\tau; \frac{\epsilon{}}{2}+\tau\delta \bigr) \cdot{} \exp \left[2\pi{}i(\delta{}^t(\frac{\epsilon{}}{2}+\tau\delta)-(1/2)\delta{}^t(2\tau{})\delta{})\right]\\
&= \,\theta \bigl(2\tau; \frac{\epsilon{}}{2}+\tau\delta\bigr) \cdot{} \exp \left[\pi{}i\delta{}^t\epsilon{}\right].
\end{align*}
If $\delta{}\notin\epsilon^\perp$, then $\exp \left[\pi{}i\delta{}^t\epsilon{}\right] = -1$. So $\theta \bigl(2\tau; \frac{\epsilon{}}{2}+\tau\delta \bigr)=0$. Thus, $p_\epsilon$ is in $H_\epsilon^+\cap{}K_3$.

\smallskip

Now recall (from Example~\ref{ex:Kummsur})
 that the equation of a Kummer surface can be expressed in terms of the coordinates of any of its $16$ singular points. We apply this to
 the Kummer surface $K_2^{\epsilon,+}$. Namely, let $L_i^\epsilon$ denote the signed $4\times 4$-minor of the corresponding matrix \eqref{universalk2} 
 which is obtained by deleting row $1$ and column $i+1$. Then the coefficients of (\ref{intersect}) are proportional to $L_0^\epsilon , L_1^\epsilon, L_2^\epsilon ,L_3^\epsilon,L_4^\epsilon$. Explicitly, we have the relations
\begin{align*}
&(r : s_{001} : s_{010} :s_{011} :t_{100}) \,\,=\,\,  (L_0^{100} :L_1^{100} :L_2^{100} :L_3^{100} :L_4^{100}), \\
& (r : s_{001} :s_{100} : s_{101} : t_{010}) \,\, = \,\,
(L_0^{010} :L_1^{010} :L_2^{010} :L_3^{010} :L_4^{010}) ,\\
&(r:s_{010}:s_{100}:s_{110}:t_{001})\,\,=\,\,
(L_0^{001} :L_1^{001} :L_2^{001} :L_3^{001} :L_4^{001}) ,\\
&(r:s_{001}:s_{110}:s_{111}:t_{110})\,\,=\,\,
(L_0^{110} :L_1^{110} :L_2^{110} :L_3^{110} :L_4^{110}) ,\\
&(r:s_{010}:s_{101}:s_{111}:t_{101})\,\,=\,\,
(L_0^{101} :L_1^{101} :L_2^{101} :L_3^{101} :L_4^{101}) ,\\
&(r:s_{011}:s_{100}:s_{111}:t_{011})\,\,=\,\,
(L_0^{011} :L_1^{011} :L_2^{011} :L_3^{011} :L_4^{011}) ,\\
&(r:s_{011}:s_{101}:s_{110}:t_{111})\,\,=\,\,
(L_0^{111} :L_1^{111} :L_2^{111} :L_3^{111} :L_4^{111}) .
\end{align*}
Combining these formulas, we see that with respect to the normalization $r=1$, the coefficients of the 
Coble quartic (\ref{eq:coblequartic}) are given by
\begin{align*}
&s_{001}=\frac{L^{100}_1}{L^{100}_0}, \
s_{010}=\frac{L^{100}_2}{L^{100}_0}, \
s_{011}=\frac{L^{100}_3}{L^{100}_0}, \
s_{100}=\frac{L^{010}_2}{L^{010}_0}, \,
s_{101}=\frac{L^{010}_3}{L^{010}_0}, \\ &
 \ s_{110}=\frac{L^{001}_3}{L^{001}_0},\
s_{111}=\frac{L^{011}_3}{L^{011}_0} \,,\quad \hbox{and} \quad 
\,t_\sigma=\frac{L_4^\sigma}{L_0^\sigma}\quad
\hbox{for $\sigma\in \F_2^3 \backslash \{0\}$.}
\end{align*}
The $L_i^{\epsilon{}}$  are polynomials of degree twelve in the
eight coordinates of the point $p_\epsilon$ in (\ref{eq:peps}).
The next step is to relate these coordinates $\Theta_2[\sigma](\tau; \frac{\epsilon}{4})$ to the variables $\mathbf{u}$ on our $\PP^7$. 

We introduce the following notation for the first order theta constants:
\begin{equation}
\label{eq:bigT}
T^\epsilon_{\epsilon'}\,\,=\,\,\theta[\epsilon|\epsilon'](\tau; 0).
\end{equation}
By the inverse addition formula \eqref{eq:additioninv}, we have
\begin{align}
\label{l2}
8\Theta_2[\sigma](\tau; \frac{\epsilon}{4}) \Theta_2[\sigma+\delta](\tau; \frac{\epsilon}{4}) &\,\,= \sum_{\epsilon'\in(\mathbb{Z}/2)^g} (-1)^{\sigma\cdot\epsilon'}T^\delta_{\epsilon'} T^\delta_{\epsilon'+\epsilon}.
\end{align}
In the special case $\delta = 0$, the left hand side becomes a square:
\begin{align}
\label{l1}
8\Theta^2_2[\sigma](\tau; \frac{\epsilon}{4}) &\,\,=\sum_{\epsilon'\in(\mathbb{Z}/2)^g}(-1)^{\sigma\cdot\epsilon'}T^0_{\epsilon'}T^0_{\epsilon'+\epsilon}.
\end{align}
Now, for $0\leq i\leq 3$, the quantity $L^\epsilon_i$ is actually a polynomial of degree $6$ in $\Theta^2_2[*](\tau; \frac{\epsilon}{4})$. Hence formula \eqref{l1} allows us to write these  $L^\epsilon_i$ as polynomials in the variables $T^0_{\epsilon'}$.  For example,
\begin{align*}
L_0^{100} &= \,\,-\frac{1}{64}(T_{000}^0T_{010}^0T_{100}^0T_{110}^0 - T_{001}^0T_{011}^0T_{101}^0T_{111}^0)
(T_{000}^0T_{001}^0T_{100}^0T_{101}^0 - T_{010}^0T_{011}^0T_{110}^0T_{111}^0)\\*
&\qquad \qquad \cdot (T_{000}^0T_{011}^0T_{100}^0T_{111}^0 - T_{001}^0T_{010}^0T_{101}^0T_{110}^0),\\
L_1^{100} &= \,\,\frac{1}{32}(T_{000}^0T_{010}^0T_{100}^0T_{110}^0 + T_{001}^0T_{011}^0T_{101}^0T_{111}^0)
(T_{000}^0T_{001}^0T_{100}^0T_{101}^0 - T_{010}^0T_{011}^0T_{110}^0T_{111}^0)\\*
&\qquad \qquad \cdot (T_{000}^0T_{011}^0T_{100}^0T_{111}^0 - T_{001}^0T_{010}^0T_{101}^0T_{110}^0),\\
L_2^{100} &=\,\, \frac{1}{32}(T_{000}^0T_{010}^0T_{100}^0T_{110}^0 - T_{001}^0T_{011}^0T_{101}^0T_{111}^0)
(T_{000}^0T_{001}^0T_{100}^0T_{101}^0 + T_{010}^0T_{011}^0T_{110}^0T_{111}^0)\\*
&\qquad\qquad \cdot (T_{000}^0T_{011}^0T_{100}^0T_{111}^0 - T_{001}^0T_{010}^0T_{101}^0T_{110}^0),\\
L_3^{100} &=\,\, \frac{1}{32}(T_{000}^0T_{010}^0T_{100}^0T_{110}^0 - T_{001}^0T_{011}^0T_{101}^0T_{111}^0)
(T_{000}^0T_{001}^0T_{100}^0T_{101}^0 - T_{010}^0T_{011}^0T_{110}^0T_{111}^0)\\*
&\qquad \qquad \cdot (T_{000}^0T_{011}^0T_{100}^0T_{111}^0 + T_{001}^0T_{010}^0T_{101}^0T_{110}^0).
\end{align*}

The quantity $L^\epsilon_4$ can be written in the form
\begin{equation}\label{equation:L4}
L^\epsilon_4 = \prod_{\delta\in\epsilon^\perp} \Theta_2[\delta](\tau; \frac{\epsilon}{4})\cdot M^\epsilon
\end{equation}
where $M^\epsilon$ is a polynomial of degree $4$ in $\Theta^2_2[*](\tau; \frac{\epsilon}{4})$. Therefore, we can apply \eqref{l1} to write $M^\epsilon$ as a polynomial of degree $8$ in the $T^\epsilon_{\epsilon'}$. To deal with expressions of the form $\prod_{\delta\in\epsilon^\perp} \Theta_2[\delta](\tau; \frac{\epsilon}{4})$, we group them into two products of pairs of theta functions and apply \eqref{l2}. Note that some of the $T^\epsilon_{\epsilon'}$ on the right hand side of \eqref{l2}, namely the ones such that $\epsilon'\in{}\epsilon{}^{\perp{}}$, correspond to odd characteristics and thus vanish. Hence we can write, for example,
$$
L_4^{100} \,\,\,=\,\,\, \frac{1}{8}\cdot\bigl(\prod_{\epsilon'\in \mathbb{F}_2^3}T^{0}_{\epsilon'}\bigr) 
\cdot \bigl(\,(T_{000}^{001})^2(T_{100}^{001})^2-(T_{010}^{001})^2(T_{110}^{001})^2 \,\bigr).
$$
In this manner, we express all of the coefficients of the Coble quartic in terms of the $T^{\epsilon}_{\epsilon{}'}$. Clearing the denominators, we get the following expressions of degree $28$
 for $r,s_{\sigma},t_{\sigma}$:
\begin{align}\label{equation:coble_r}
r &=\prod_{\epsilon\in\mathbb{F}_2^3\setminus \{0\}}\left(\prod_{\epsilon'\in\epsilon^\perp}T^0_{\epsilon'}-\prod_{\epsilon'\notin\epsilon^\perp}T^0_{\epsilon'} \right),\\ \label{equation:coble_s}
s_\sigma&=  (-2)\cdot\left(\prod_{\epsilon'\in\sigma^\perp}T^0_{\epsilon'}+\prod_{\epsilon'\notin\sigma^\perp}T^0_{\epsilon'} \right)\cdot
\prod_{\epsilon\in\mathbb{F}_2^3\setminus\{0,\sigma\}}\left(\prod_{\epsilon'\in\epsilon^\perp}T^0_{\epsilon'}-\prod_{\epsilon'\notin\epsilon^\perp}T^0_{\epsilon'} \right),\\
\label{equation:coble_t}
t_\sigma&= 8\cdot\prod_{\epsilon'\in\mathbb{F}_2^3}T^{0}_{\epsilon'}\cdot\prod_{\epsilon\notin\sigma^\perp}\left(\prod_{\epsilon'\in\epsilon^\perp}T^0_{\epsilon'}-\prod_{\epsilon'\notin\epsilon^\perp}T^0_{\epsilon'} \right)\cdot W_\sigma , 
\end{align}
where $W_\sigma$ is a polynomial of degree $4$ in the $T^\epsilon_{\epsilon'}$ of the form $(T_*^*)^2(T_*^*)^2-(T_*^*)^2(T_*^*)^2$.

To write these coefficients as polynomials in the $u_\sigma$, we'd like to 
use the addition formula
\begin{align}
\label{addition}
\left(T^\epsilon_{\epsilon'}\right)^2\,\,\,\,=\,\,\,
\sum_{\sigma\in\mathbb{F}_2^3}(-1)^{\sigma\cdot\epsilon'}u_\sigma\cdot u_{\sigma+\epsilon}.
\end{align}
In order to do this, we must write the coefficients of the Coble quartic as polynomials in the squares of the $T^\epsilon_{\epsilon'}$. The expressions for $r,s_\sigma,t_\sigma$ above are not yet in this form. However, by expanding the products we observe that these expressions can be written as polynomials in the squares of the $T^\epsilon_{\epsilon'}$ together with $\prod_{\epsilon\in\mathbb{F}_2^3}T^{0}_{\epsilon}$. Thus, it remains to express the quantity $\prod_{\epsilon\in\mathbb{F}_2^3}T^{0}_{\epsilon}$ as a polynomial in the squares of the $T^\epsilon_{\epsilon'}$. To do this, we make use of the following special case of {\em Riemann's theta relations}
 \cite[Exercise 7.9]{BL}:
\begin{align} \label{eqn:riemannthetarel}
\prod_{\epsilon'\in\epsilon^\perp}T^0_{\epsilon'}\,-\prod_{\epsilon'\notin\epsilon^\perp}T^0_{\epsilon'}
\quad = \quad \prod_{\epsilon'\in\epsilon^\perp}T^{\epsilon}_{\epsilon'}.
\end{align}
Squaring this formula, we get
\begin{align}
\prod_{\epsilon'\in\epsilon^\perp}(T^0_{\epsilon'})^2 + \prod_{\epsilon'\notin\epsilon^\perp}(T^0_{\epsilon'})^2 - 2\prod_{\epsilon'\in\mathbb{F}_2^3}T^{0}_{\epsilon'}\,\,\, = \,\,\,
\prod_{\epsilon'\in\epsilon^\perp}(T^{\epsilon}_{\epsilon'})^2.
\end{align}
Thus, we get a formula for $\prod_{\epsilon\in\mathbb{F}_2^3}T^{0}_{\epsilon}$ as a polynomial in the squares of the $T$ variables. Then, we apply the formula \eqref{addition} to the expressions \eqref{equation:coble_r}, \eqref{equation:coble_s}, \eqref{equation:coble_t} for $r,s_\sigma,t_\sigma$. We obtain the 
$15$ polynomials $r(\mathbf{u}),s_\sigma(\mathbf{u}), t_\sigma(\mathbf{u})$ 
of degree $28$ in the variables $\mathbf{u}$ that had been promised.

\smallskip

(b) We next argue that, set-theoretically, the variety of the ideal $\langle r,s_\sigma, t_{\sigma} \rangle$ equals the Torelli boundary in the Satake hypersurface $\mathcal{S}$. First, suppose that $p \in \mathcal{S}$ lies in the zero locus of $\langle r, s_\sigma, t_\sigma \rangle$. 
By \eqref{equation:coble_r} and Riemann's theta relations \eqref{eqn:riemannthetarel}, the quantity 
\begin{align}
\label{riemannrel}
\prod_{\epsilon'\in\epsilon^\perp}T^{\epsilon}_{\epsilon'}=
\prod_{\epsilon'\in\epsilon^\perp}T^0_{\epsilon'}-\prod_{\epsilon'\notin\epsilon^\perp}T^0_{\epsilon'}
\end{align} vanishes at $p$ for at least one $\epsilon \in \mathbb{F}_2^3 \setminus \{0\}$. By 
the proof of Proposition \ref{prop:torellibdr}
or \cite[Theorem 3.1]{glass},
 in order to show that $p$ lies in the Torelli boundary, it suffices to show that at least $2$ first order even theta constants $T^{\epsilon{}}_{\epsilon{}'}$ vanish at $p$. We distinguish the following two cases:

{\it Case 1.}
 Suppose $\prod_{\epsilon'\in\epsilon^\perp}T^{\epsilon}_{\epsilon'}$ vanishes for 
two distinct $\epsilon{}=\epsilon{}_1,\epsilon{}_2\in{}\mathbb{F}_2^3\setminus \{0\}$. 
Since, these monomials have disjoint sets of variables,
 at least two first order even theta constants vanish.

{\it Case 2.} Suppose that $\left(\prod_{\epsilon'\in\epsilon^\perp}T^0_{\epsilon'}-\prod_{\epsilon'\notin\epsilon^\perp}T^0_{\epsilon'} \right)$ vanishes for exactly one $\epsilon{}\in{}\mathbb{F}_2^3 \setminus \{0\}$. Then by  \eqref{equation:coble_s}, $s_\epsilon{}$ is a product of $\left(\prod_{\epsilon'\in\epsilon^\perp}T^0_{\epsilon'}+\prod_{\epsilon'\notin\epsilon^\perp}T^0_{\epsilon'} \right)$ with some nonzero factors. Thus, $\left(\prod_{\epsilon'\in\epsilon^\perp}T^0_{\epsilon'}+\prod_{\epsilon'\notin\epsilon^\perp}T^0_{\epsilon'} \right)$ vanishes. But then the hypothesis of Case 2 implies
\begin{equation}\label{equation:two_terms_vanish}
\prod_{\epsilon'\in\epsilon^\perp}T^0_{\epsilon'} = \prod_{\epsilon'\notin\epsilon^\perp}T^0_{\epsilon'} = 0.
\end{equation}
and again we conclude that at least two first order even theta constants vanish.
This shows that the variety cut out by the polynomials $r,s_\bullet,t_\bullet$ is contained in the Torelli boundary.  

\medskip

We now show the reverse containment. Suppose a point $p$ lies in the Torelli boundary. By 
\cite[Theorem 3.1]{glass} at least $6$ first order even theta constants $T^{\epsilon{}}_{\epsilon{}'}$ vanish at $p$. In order to show  that all $15$ polynomials $r,s_\bullet,t_\bullet$ 
   vanish at $p$, we divide our analysis into three cases:

{\it Case 1.} Suppose that at least two $T^0_{\epsilon{}_1'}$, $T^0_{\epsilon{}_2'}$ vanish. Then there exists $\epsilon{}\in{}\mathbb{F}_2^3 \setminus \{0\}$ such
 that one of $\epsilon{}_1',\epsilon{}_2'$ is in $\epsilon{}^\perp{}$ and the other is not. For this choice of $\epsilon$, equation (\ref{equation:two_terms_vanish}) holds. Since the expressions (\ref{equation:coble_r}) and (\ref{equation:coble_s}) for $r$ and the $s_\sigma{}$ each contain a factor of either $\left(\prod_{\epsilon'\in\epsilon^\perp}T^0_{\epsilon'}+\prod_{\epsilon'\notin\epsilon^\perp}T^0_{\epsilon'} \right)$ or $\left(\prod_{\epsilon'\in\epsilon^\perp}T^0_{\epsilon'}-\prod_{\epsilon'\notin\epsilon^\perp}T^0_{\epsilon'} \right)$, we see that $r$ and all $s_\sigma{}$ vanish. Also, each $t_\sigma{}$ vanishes because the expression (\ref{equation:coble_t}) has a factor $\prod_{\epsilon'\in\mathbb{F}_2^3}T^{0}_{\epsilon'}$.

{\it Case 2.} Suppose that exactly one $T^0_{\epsilon{}'}$ vanishes. Since at least six theta
 constants vanish, at least $5$ of the $T^{\epsilon{}}_{\epsilon{}'}$ with $\epsilon{}\neq{}0$ 
 vanish. So there exist two vanishing $T^{\epsilon{}_1}_{\epsilon{}'_1}$, $T^{\epsilon{}_2}_{\epsilon{}'_2}$ with $\epsilon{}_1,\epsilon{}_2\neq{}0$ and $\epsilon{}_1\neq{}\epsilon{}_2$. By Riemann's theta relations \eqref{eqn:riemannthetarel}, $\left(\prod_{\epsilon'\in\epsilon^\perp}T^0_{\epsilon'}-\prod_{\epsilon'\notin\epsilon^\perp}T^0_{\epsilon'} \right)$ vanishes for $\epsilon{} = \epsilon{}_1,\epsilon{}_2$. Since the expression  \eqref{equation:coble_r} for $r$ contains all 
seven such expressions as factors and each $s_\sigma{}$ contains all but one of these as factors, we see that $r$ and all the $s_\sigma{}$ vanish. The $t_\sigma{}$ vanish for the same reason as in Case 1.

{\it Case 3.} Suppose that none of the $T^0_{\epsilon{}'}$ vanishes. Then $r$ and the $s_\sigma{}$ vanish by the argument in Case 2. Using (\ref{riemannrel}), we rewrite equation (\ref{equation:coble_t})  for the $t_\sigma{}$ as follows:
\begin{equation}\label{equation:coble_t_simplified}
t_\sigma=C\cdot\prod_{\epsilon\notin\sigma^\perp}\prod_{\epsilon'\in\epsilon^\perp}T^{\epsilon}_{\epsilon'}\cdot W_\sigma,
\end{equation}
where $C$ is nonzero. Let $T^{\epsilon{}_j}_{\epsilon{}_j'}$, $j=1,\dotsc{},6$ be vanishing first order even theta constants whose characteristics form an azygetic $6$-set. The vanishing of each of the $T^{\epsilon{}_j}_{\epsilon{}_j'}$ implies the vanishing of $4$ of the $t_\sigma{}$, namely those with $\sigma{}\notin{}\epsilon{}_j^\perp{}$. If the $\mathbb{F}_2$-vector space spanned by the $\epsilon{}_j$ is the whole space $\mathbb{F}_2^3$, then all $7$ $t_\sigma{}$ vanish in this way. If not, without loss of generality, we may assume that $\{\epsilon{}_1,\dotsc{},\epsilon{}_6\}\subset{}\{001,010,011\}$. In this case, it suffices to show that $t_{100}$ vanishes.

We note that there are many ways to write $W_\sigma{}$ as a polynomial in the $T^\epsilon_{\epsilon'}$, coming from different ways of grouping the $4$ theta constants in (\ref{equation:L4}) into two pairs. For $W_{100}$, we have
\begin{align*}
W_{100} &= (T^{001}_{000})^2(T^{001}_{100})^2 - (T^{001}_{010})^2(T_{001}^{110})^2\\
&= (T^{010}_{000})^2(T^{010}_{100})^2 - (T^{010}_{001})^2(T_{010}^{101})^2\\
&= (T^{011}_{000})^2(T^{011}_{100})^2 - (T^{011}_{011})^2(T_{011}^{111})^2.
\end{align*}
By our assumption, six of the twelve $T$-variables in the expression for $W_{100}$ vanish. The only way $W_{100}$ can be nonzero is for the following three conditions to hold:
\begin{align*}
T^{001}_{000}=T^{001}_{100}=0 &\,\,\text{ or }\,\, T^{001}_{010}=T_{001}^{110}=0;\\
T^{010}_{000}=T^{010}_{100}=0 &\,\,\text{ or }\,\, T^{010}_{001}=T_{010}^{101}=0;\\
T^{011}_{000}=T^{011}_{100}=0 &\,\,\text{ or }\,\, T^{011}_{011}=T_{011}^{111}=0.
\end{align*}
This gives eight cases in total. It can be verified that in each of these eight cases, none of the characteristics of the 
six vanishing first order even theta constants are azygetic. We conclude that all $t_\sigma{}$ vanish, and this completes our proof that the Torelli boundary is contained in the common zero set of 
our coefficient polynomials $r(\mathbf{u}),s_\sigma(\mathbf{u}),t_\sigma(\mathbf{u})$
for the Coble quartic.

\smallskip

(c) Our next goal is to show that $\langle{}F,\mathcal{S}\rangle{}$ is a prime ideal.
Let $\mathcal{C}'$ denote the subscheme of $\mathbb{P}^7\times{}\mathbb{P}^7$
defined by this ideal. We begin by proving that $\mathcal{C}'$  is reduced and
   irreducible. Consider $\mathcal{C}'$ as a family $\pi \colon \mathcal{C}' \to \overline{\vartheta(\mathcal{A}_3(2,4))}$. Let $\mathcal{U}$ denote the non-hyperelliptic locus in $\overline{\vartheta(\mathcal{A}_3(2,4))}$. The fiber over each closed point in $\mathcal{U}$ is an
   irreducible Coble quartic hypersurface. Thus, $\pi^{-1}(\mathcal{U})$ is a family over an irreducible 
   base whose fibers are irreducible and have the same dimension. Therefore, $\pi{}^{-1}(\mathcal{U})$ is irreducible by \cite[Exercise 14.3]{eisenbud}. Since $F$ and $\mathcal{S}$ have no common factor,
   the ideal $\langle{}F,\mathcal{S}\rangle{}$ is a complete intersection. Hence it is Cohen--Macaulay, and all of its 
    minimal primes have the same dimension~$12$. 

We claim that $\mathcal{C}'$ is the closure of $\pi{}^{-1}(\mathcal{U})$ in $\mathbb{P}^7\times{}\mathbb{P}^7$. Suppose it is not. Then it has a twelve-dimensional component contained in $(\overline{\vartheta(\mathcal{A}_3(2,4))}\setminus \mathcal{U}) \times \mathbb{P}^7$. Since $\dim(\overline{\vartheta(\mathcal{A}_3(2,4))}\setminus \mathcal{U}) = 5$, this can only happen if there is a five-dimensional subvariety $Z$ of $\overline{\vartheta(\mathcal{A}_3(2,4))}\setminus \mathcal{U}$ such that $Z\times{}\mathbb{P}^7\subset{}\mathcal{C}'$, i.e. all of the $r,s_\sigma{},t_\sigma{}$ vanish on $Z$. This is impossible because the zero locus of $r,s_\sigma{},t_\sigma{}$ 
is the Torelli boundary, which has dimension four. Therefore, $\mathcal{C}'$ is irreducible and $\mathcal{C}=\mathcal{C'}$. We now know that the radical of $\langle{}F,\mathcal{S}\rangle{}$ is a prime ideal.

The Cohen--Macaulay property implies that $\langle{}F,\mathcal{S}\rangle{}$  satisfies Serre's criterion $(S_1)$. Since the variables ${\bf x}$ do not appear in $\mathcal{S}$, the Jacobian matrix of $\langle F,\mathcal{S} \rangle$ is a $2 \times 16$-matrix in block  triangular form. If we pick any point $({\bf u}, {\bf x})$ with ${\bf u} \in \mathcal{S}$  non-hyperelliptic  and ${\bf x}$  non-singular on its Coble quartic, then the Jacobian matrix has rank 2 at this point.
So, this point is non-singular on the scheme defined by $F=\mathcal{S}=0$, and hence $\langle F,\mathcal{S} \rangle$ is generically reduced.
Therefore, $\langle F,\mathcal{S}\rangle$ is radical, and it is the prime ideal defining
the universal Coble quartic $\mathcal{C} $ in $\PP^7 \times \PP^7$.
 The variety $\mathcal{C}$  has bidegree $16U(28U+4X)$ because it is a complete intersection defined by two polynomials of bidegree $(16,0)$ and $(28,4)$.
 This completes the proof.
 \end{proof}

\begin{remark} \rm
The {\em Satake ring} $\mathbb{C}[\mathbf{u}]/\langle{}\mathcal{S}\rangle{}$ is not a 
unique factorization domain (UFD). Our degree $28$
polynomial $r({\bf u})$ has  distinct  factorizations.
For any $\sigma{}\in{}\mathbb{F}_2^3$, we have
  $r  = c_\sigma q_\sigma$,~where
$$
c_\sigma{} \,\,=\, \prod_{\epsilon\in\sigma{}^{\perp{}}\setminus0}\left(\prod_{\epsilon'\in\epsilon^\perp}T^0_{\epsilon'}-\prod_{\epsilon'\notin\epsilon^\perp}T^0_{\epsilon'} \right)
\quad \hbox{and} \quad
q_\sigma{} \,=\, \prod_{\epsilon\notin\sigma{}^{\perp{}}}\left(\prod_{\epsilon'\in\epsilon^\perp}T^0_{\epsilon'}-\prod_{\epsilon'\notin\epsilon^\perp}T^0_{\epsilon'} \right)\\
$$
Both $c_\sigma{}$ and $q_\sigma{}$ are polynomials in the square of the $T^\epsilon_{\epsilon'}$ and $\prod_{\epsilon\in\mathbb{F}_2^3}T^{0}_{\epsilon}$. As before, we can rewrite them
 as polynomials in $\mathbf{u}$. Therefore, as polynomials in $\mathbf{u}$,  we have
$r=c_\sigma{}q_\sigma{}\mod{}\langle \mathcal{S} \rangle$.
Hence $c_\sigma{}$ is a factor of $r$ in 
$\mathbb{C}[\mathbf{u}]/\langle{}\mathcal{S}\rangle{}$. 
It can also be verified that these $c_\sigma{}$ are pairwise relatively prime in $\mathbb{C}[\mathbf{u}]$, thus pairwise relatively prime in $\mathbb{C}[\mathbf{u}]/\langle{}\mathcal{S}\rangle{}$ since $\deg(c_\sigma) = 12 < 16 = \deg(\mathcal{S})$. 
This is impossible for a UFD. Compare this to results of Tsuyumine \cite{tsuyumine} 
and others on the UFD property for
 coordinate rings representing the moduli space~$\mathcal{A}_3$. \qed
\end{remark}

We close with a remark that highlights the utility 
of the formulas derived in this paper.

\begin{remark}  \rm
Our supplementary files enable the reader to write
geometric properties of plane quartic curves
explicitly in terms of the theta constants (\ref{eq:2ndorder}).
For example, consider the condition that a ternary quartic is the sum of
five reciprocals of linear forms. Such quartics are known as
{\em L\"uroth quartics}. 
A classical result of Morley states that L\"uroth quartics
form a hypersurface of degree $54$ in the $\PP^{14}$ of all quartics.
See \cite{OS} for a modern exposition.

L\"uroth quartics are also characterized by the vanishing of the
following {\em Morley invariant}:
\[
\begin{matrix} \phantom{+}
f_{1234567}{+}f_{1234576}{+}f_{1234657}{+}f_{1234675}{+}f_{1234756}{+}
f_{1234765}{+}f_{1235467}{+}f_{1235476}{+}f_{1235647}{+}f_{1235674} \\ {+}
f_{1235746}{+}f_{1235764}{+}f_{1236457}{+}f_{1236475}{+}f_{1236547}
{+}f_{1236574}{+} f_{1236745}{+}f_{1236754}{+}f_{1237456}{+}f_{1237465} \\
{+}f_{1237546}{+}f_{1237564}{+}f_{1237645}{+}f_{1237654}{+}
f_{1243567}{+}f_{1243576}{+}f_{1243657}{+}
f_{1243675}{+}f_{1243756}{+}f_{1243765}
\end{matrix}
\]
This expression is found in \cite[page 379, after Figure 1]{OS}.
Using the transformation derived in (\ref{eq:fgTOrst}), the Morley
invariant translates into the following linear form in Coble coefficients:
\[
6 r + s_{001} + s_{010} + s_{100} + s_{011} + s_{101} + s_{110} + s_{111}.
\]
We note that, for $S_7 \subset W(\mathrm{E}_7)$ embedded as a parabolic subgroup, the 
$15$-dimensional space of Coble coefficients decomposes 
as a $14$-dimensional irreducible $S_7$-module
plus the trivial representation. The latter is spanned by the Morley invariant.
Now, substituting the polynomials $r({\bf u})$ and $s_{ijk}({\bf u})$ 
from Theorem \ref{thm:thetacoble} into this linear form, 
we obtain a polynomial of degree $28$ that has
$59256$ terms. That expression in $u_{000}, u_{001}, \ldots,u_{111}$ represents the condition that a matrix $\tau$ in the Siegel upper halfspace $\mathfrak{H}_3$ comes from a L\"uroth quartic. \qed
\end{remark}

\section{Equations for universal Kummer threefolds} \label{sec:kumeqn}

We now turn to the object that gave our paper its title. 
The Kummer threefold is the singular locus of the Coble quartic.
In the past  sections we found the ideals
for three variants of the universal Coble quartic.
Each is a twelve-dimensional projective variety,
over a six-dimensional base.
First, in Section \ref{sec:param:gopel}, the base was $\PP^6$.
Next,  in Corollary \ref{cor:gucq},
the base was the G\"opel variety $\mathcal{G}$.
Finally, in Theorem \ref{thm:thetacoble}, the base
was the Satake hypersurface ~$\mathcal{S}$.

For each of the 3 versions of the universal Coble quartic we have
a universal Kummer threefold.
Each of them is a 9-dimensional irreducible variety.
Their respective ambient spaces are $\PP^6 \times \PP^7$, 
$\PP^{14} \times \PP^7$, and $\PP^7 \times \PP^7$.
We discuss their defining equations in this order.

First, consider the parametrization of the Coble quartic (\ref{eq:coblequartic})  in terms of $c_1,c_2, \dots, c_7$ 
given in \eqref{eqn:param:weyl}. This defines what we call the {\em flex version} of the universal Kummer variety:
\[
\mathcal{K}_3^{\rm flex} \subset \mathbb{P}^6 \times \mathbb{P}^7.
\]
For $(c_1: \cdots: c_7)$ not lying on any of the reflection hyperplanes of $\mathrm{E}_7$, 
we consider all points
 $(\mathbf{c},\mathbf{x}) = \bigl((c_1: \cdots: c_7),( x_{000}: \cdots: x_{111})\bigr)$ 
 such that ${\bf x}$ lies in the Kummer variety defined by ${\bf c}$.
The variety $\mathcal{K}_3^{\rm flex}$ is the Zariski closure
of the set of all such points $({\bf c},\bf{x})$ in $\mathbb{P}^6 \times \mathbb{P}^7$.

The label `flex' refers to the fact that general points ${\bf c}$ in $\mathbb{P}^6$,
modulo the action by $W({\rm E}_7)$,
represent plane quartic curves with a distinguished 
inflection point, as seen in the proof of
Theorem \ref{thm:24to1}. We shall construct two
sets of polynomials for the ideal of $ \mathcal{K}_3^{\rm flex} $.
The first set consists of eight polynomials.  They are
 the partial derivatives, with respect to $x_{000}, \ldots,x_{111}$,
of the Coble quartic (\ref{eq:coblequartic}).
Here $r,s_\bullet,t_\bullet$ are the degree $7$ polynomials
in $c_1,\ldots,c_7$ listed after (\ref{eqn:param:weyl}).
Each of these  Coble derivatives is the sum of
$323$ terms of bidegree $(7,3)$~in~$({\bf c},{\bf x})$.

Our second set of equations for $\mathcal{K}_3^{\rm flex}$
consists of $70$ polynomials of bidegree $(6,4)$ in $({\bf c},{\bf x})$.
Their construction is considerably more difficult,
and we shall now explain it. The idea is to use the methods for degeneracy loci
arising from Vinberg's $\theta$-groups, as 
developed in \cite{gsw}.

We start with $\Omega{}^3(4)$, the third exterior power of the cotangent bundle of $\mathbb{P}^7$ twisted by $\mathcal{O}(4)$. Its global sections are homogeneous differential $3$-forms of degree $4$ on $\mathbb{P}^7$. The space of global sections $\mathrm{H}^0(\mathbb{P}^7,\Omega{}^3(4))$ is isomorphic to $\bigwedge^4 \mathbb{C}^8$ via the map
$$
\bigwedge^4 \mathbb{C}^8 \to \mathrm{H}^0(\mathbb{P}^7,\Omega^3(4)) \,,\,\,\,
a_i \wedge a_j \wedge a_k \wedge a_\ell \, \mapsto
\begin{matrix} \,\,\,\,a_\ell(da_i\wedge{}da_j\wedge{}da_k) - a_k(da_i\wedge{}da_j\wedge{}da_\ell) \\
+ a_j(da_i\wedge{}da_k\wedge{}da_\ell) - a_i(da_j\wedge{}da_k\wedge{}da_\ell)
\end{matrix}
$$
Here the basis $\{a_1,a_2,\ldots,a_8\}$ is denoted as in Remark \ref{rmk:gsw}.
Let $U_1,U_2,\dotsc{},U_8$ be the corresponding affine 
open charts on $\mathbb{P}^7$. We write
$U_i = {\rm Spec}(\mathbb{C}[z_1,\dotsc, z_{i-1}, z_{i+1},\dotsc,z_8])$,
where $z_s=a_s/a_i$.
The restriction $\Omega{}^3(4)|_{U_i}$ is generated
by the $35$ forms $dz_j\wedge{}dz_k\wedge{}dz_\ell$ where  $j,k,\ell\neq i$. 
The fiber of the rank $35$ bundle $\Omega{}^3(4)$
over any point ${\bf x} \in{}U_i$ is identified with~$\bigwedge^3\mathbb{C}^7$. 
The value of an element $\alpha{}\in{}\Omega{}^3(4)$  on that fiber
can be written as a linear combination of the 
forms $dz_j\wedge{}dz_k\wedge{}dz_\ell$. The $35$ coefficients
are homogeneous linear expressions in ${\bf x}$, or non-homogeneous linear expressions in ${\bf z}$. We regard these as the coordinates of $\alpha$.

Consider the action of $\mathrm{GL}_7(\mathbb{C})$ on $\bigwedge^3 \mathbb{C}^7$.
As explained in \cite[\S 2.6]{gsw}, this action has unique orbits
of codimensions $1$, $4$, and $7$, respectively.  We denote the closures
of these orbits by $O_1, O_4, O_7$, to indicate their codimension in $\bigwedge^3 \mathbb{C}^7$.
The orbit closure $O_1$ is a hypersurface of degree $7$.
Its defining polynomial $f$ generates the ring of
  $\mathrm{SL}_7(\mathbb{C})$-invariant polynomial functions on 
  $\bigwedge^3 \mathbb{C}^7$. We constructed the invariant $f$
using the method in \cite[Remark 4.4]{kimura}.  Its 
expansion into monomials has $10680$ terms.
It is presented in our supplementary files.
 The ideal for $O_4$ is generated by the $35$ partial derivatives of $f$.
 These are polynomials of degree $6$ in $35$ unknowns,
 and we shall now introduce a certain specialization of these.
 
 Consider a section $v=c_1h_1+\cdots+c_7 h_7\in{}\bigwedge^4 \mathbb{C}^8$ 
 where $h_1,\ldots,h_7$ are the tensors in Remark~\ref{rmk:gsw},
 and $(c_1:\dotsb{}:c_7)\in{}\mathbb{P}^6$ is generic.
 The section $v$ is a linear combination of the $56$ forms
 $da_i \wedge da_j \wedge da_k$ where each coefficient is a monomial
 of bidegree $(1,1)$ in $({\bf c},{\bf x})$. On $U_i$
 we set $a_i = 1$ and $a_j=z_j$ for $j \not=i$.
 Then $21$ of the $56$ summands vanish since $d a_i = 0$
 and $d a_j = d z_j$ for $j \not= i$.
 What is left is a linear combination of the 
  $35$ forms $dz_j\wedge{}dz_k\wedge{}dz_\ell$,
where each coefficient is a monomial of bidegree $(1, 1)$ or $(1,0)$ in $({\bf c},{\bf z})$. These $35$ coefficients are the coordinates of $v$. Shortly, we shall plug them into the derivatives of the  polynomial~$f$.
     
The following  {\it degeneracy locus} has codimension 4 in the 7-dimensional affine space $U_i$:
\begin{equation}
\label{eq:degenloc}
\bigl\{ \, {\bf x} \in{}U_i \,\,| \,\,v({\bf x}) \in{}O_4 \bigr\}.
\end{equation}
It was shown in \cite[\S 6.2]{gsw} that its closure in $\mathbb{P}^7$
is precisely the Kummer threefold over ${\bf c}$.
Here the coordinates on $\PP^7$ need to be relabeled $a_1=x_{000}, \dots, a_8=x_{111}$ as in Remark~\ref{rmk:gsw}. For completeness, we mention that the corresponding degeneracy locus for $O_7$ 
is the set of
$64$ singular points of the Kummer threefold, but we won't need to use this.

By plugging the $35$ coordinates of $v$ into the partial derivatives of $f$, 
we obtain $35$ polynomials
in $({\bf c},{\bf z})$. These polynomials are homogeneous of
degree $6$ in ${\bf c}$, and they are non-homogeneous
of degree $4$ in ${\bf z}$.
For generic ${\bf c}$, these equations define the affine Kummer threefold in $U_i = \{a_i \not= 0\} 
\simeq \mathbb{A}^7$.
If we homogenize these equations, then we obtain $35$ bihomogeneous
polynomials of degree $(6,4)$ in $({\bf c},{\bf x})$.
These all lie in the prime ideal of  $\mathcal{K}_3^{\rm flex}$.

We now repeat this process for the seven other affine charts $U_j$.
This leads to $8 \cdot 35 = 280$ polynomials of bidegree $(6,4)$ in $({\bf c},{\bf x})$,
but only $120$ of them are distinct. Some of these polynomials
have $332$ terms, while the others have $362$.
They span a $70$-dimensional
vector space over $\mathbb{Q}$, and we select a basis for that space.
We conjecture that this basis suffices:

\begin{conjecture} \label{conj:uno}
The  prime ideal 
of the universal Kummer threefold in $\PP^6 \times \PP^7$
is minimally generated by the $78$ polynomials above,
namely, $8$ of bidegree $(7,3)$ and $70$ of bidegree $(6,4)$.
\end{conjecture}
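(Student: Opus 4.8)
The plan is to reduce the conjecture to the single assertion that the ideal $J \subseteq \mathbb{Q}[c_1,\dots,c_7,x_{000},\dots,x_{111}]$ generated by the $78$ listed polynomials equals the prime ideal $I = I(\mathcal{K}_3^{\mathrm{flex}})$. Once that is known, minimality is automatic: the $8$ Coble derivatives are linearly independent of bidegree $(7,3)$, the $70$ polynomials of bidegree $(6,4)$ are linearly independent by construction, and no generator in one of these two bidegrees can be expressed through the others, since the bidegree shifts $(7,3)-(6,4)=(1,-1)$ and $(6,4)-(7,3)=(-1,1)$ each have a negative entry. So it remains to prove $J = I$. Since $J\subseteq I$ and $\mathcal{K}_3^{\mathrm{flex}}$ is irreducible of codimension $4$ in $\PP^6\times\PP^7$, it suffices, exactly as in the proof of Theorem~\ref{thm:gopel}, to show that $J$ is radical and that $V(J)=\mathcal{K}_3^{\mathrm{flex}}$ set-theoretically.

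For the set-theoretic statement, I would argue fiberwise over $\PP^6$. For $\mathbf{c}$ off the reflection arrangement of $\mathrm{E}_7$, the eight Coble derivatives restrict to a basis of the space of cubics through the Kummer threefold (Proposition~\ref{prop:CC}), and the $70$ polynomials of bidegree $(6,4)$ were manufactured, via the $\theta$-group construction of \cite{gsw}, precisely so that their restriction spans the entire degree-$4$ piece of that Kummer ideal, namely the $64$ products of a cubic generator with a linear form together with the $6$ genuine quartic generators. Hence over a dense open $U\subseteq\PP^6$ the restriction of $J$ is the saturated homogeneous ideal of the corresponding Kummer threefold in $\PP^7$, so $\mathcal{K}_3^{\mathrm{flex}}$ is the only component of $V(J)$ dominating $\PP^6$. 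The remaining work is to rule out extra components of $V(J)$ lying over the reflection hyperplanes, and over the $651$ flats forming the indeterminacy locus of $\gamma$ in Theorem~\ref{thm:indeterminacy}: concretely, one examines the limit of the Coble quartic and of the degeneracy locus \eqref{eq:degenloc} as $\mathbf{c}$ approaches such a flat, in each affine chart $U_i$ of $\PP^7$, and checks that no new $9$-dimensional component is created.

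Radicality would then follow, as for the G\"opel variety, from the criterion that a scheme which is generically reduced and satisfies Serre's condition $(S_1)$ is reduced (\cite[Exercise 11.10]{eisenbud}): generic reducedness is verified by checking that the Jacobian of the $78$ generators has rank $4$ at one random point of $\mathcal{K}_3^{\mathrm{flex}}$ coming from the parametrization of Theorem~\ref{thm:24to1}, while $(S_1)$ would be obtained by showing that $\mathbb{Q}[\mathbf{c},\mathbf{x}]/J$ is arithmetically Cohen--Macaulay, ideally by exhibiting a bihomogeneous regular sequence modulo $J$ and matching the resulting Hilbert series, first over $\Z/101$ and then lifting by semicontinuity. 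This last step is the principal obstacle, and the reason the statement is only conjectural: the ambient ring has $15$ variables, $J$ has $78$ generators spanning a large $W(\mathrm{E}_7)$-representation, and the requisite Gr\"obner-basis or free-resolution computation lies beyond current software. Short of completing it, one can still accumulate strong evidence by the interpolation method of the remark following Proposition~\ref{lem:GG}: evaluation at sufficiently many sample points $\kappa(\tau,z)$ confirms $\dim I_{(7,3)}=8$, $\dim I_{(6,4)}=70$, vanishing of $I$ in all smaller bidegrees, and that $I_{(7,4)}$, $I_{(8,3)}$, $I_{(8,4)}$ are already spanned by products of the $78$ generators with linear forms; upgrading this to a proof, however, would require a Castelnuovo--Mumford regularity bound for $\mathcal{K}_3^{\mathrm{flex}}$ that we do not presently have.
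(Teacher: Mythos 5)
This statement is a conjecture in the paper, not a theorem: the authors construct the $78$ polynomials, verify (numerically, via evaluation at points $\kappa(\tau,z)$) that they lie in the prime ideal $\mathcal{I}(\mathcal{K}_3^{\mathrm{flex}})$, and explicitly leave open whether they generate it. There is therefore no proof in the paper to compare against, and your proposal is --- as you acknowledge --- a proof \emph{program} rather than a proof.

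That program is the right one. You correctly transplant the strategy used for Theorem~\ref{thm:gopel} and Corollary~\ref{cor:gucq}: show the candidate ideal cuts out $\mathcal{K}_3^{\mathrm{flex}}$ set-theoretically, then establish arithmetic Cohen--Macaulayness to get $(S_1)$, verify generic reducedness via the Jacobian, and conclude that the ideal is prime. You also correctly locate the principal obstruction: nobody can presently compute a Gr\"obner basis or minimal free resolution of $J$ in a $15$-variable bigraded ring with $78$ generators spanning a sizable $W(\mathrm{E}_7)$-representation, and no regularity bound for $\mathcal{K}_3^{\mathrm{flex}}$ is available to cap the interpolation method. Your minimality argument, conditional on $J=I$, is sound: the Betti table of a general Kummer fiber shows the degree-$4$ piece of its saturated ideal has dimension exactly $70$ ($64$ products of the $8$ cubics with linear forms, all linearly independent since the first syzygies start in degree $5$, plus $6$ new quartics), and bidegree considerations prevent one generating slot from absorbing the other.

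Two caveats, both substantive and both part of why the statement remains a conjecture. First, that the $70$ polynomials of bidegree $(6,4)$ produced by the degeneracy-locus construction restrict, over a general $\mathbf{c}$, to a spanning set of the entire degree-$4$ piece of the saturated Kummer ideal --- rather than a proper subspace --- is asserted via \cite[\S 6.2]{gsw} and checked numerically, not proved scheme-theoretically in the paper; your outline leans on this. Second, the analysis over the reflection arrangement is genuinely delicate: the parametrization $\gamma$ is undefined there (Theorem~\ref{thm:indeterminacy}), the plane quartic degenerates, and the degeneracy-locus description of the fiber breaks down, so ruling out extra $9$-dimensional components of $V(J)$ there needs a separate argument you have not supplied. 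Neither point invalidates the outline, but both would require new work, consistent with the authors' decision to state this only as a conjecture.
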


Second, let us consider the {\em G\"opel version} of the
 universal Kummer variety:
 \[
\mathcal{K}_3^{\rm gopel} \,\,\subset \,\,\,\mathcal{G} \,\,\times
\,\, \mathbb{P}^7 \,\,\subset \,\,\mathbb{P}^{14} \times \mathbb{P}^7.
\]
Here, the universal Coble quartic gives an equation of bidegree $(1,4)$.
Its eight partial derivatives are polynomials of bidegree $(1,3)$. 
These polynomials are not sufficient to generate the Kummer ideal, even
over a general point in $\mathcal{G}$, because we need
$70$ quartics. The eight derivatives only give $64$ equations of bidegree $(1,4)$,
so we need at least six more polynomials of bidegree $(?,4)$.
We do not know how to produce these extra quartics.
In other words, we do not know how to lift the
degeneracy locus construction of (\ref{eq:degenloc})
to the G\"opel variety~$\mathcal{G}$.
In light of the beautiful combinatorics in Section \ref{sec:toricgopel},
it is desirable to study  this further.

\smallskip

Last but not least, we return to the object that started this project.
The {\em theta version} of our variety is the Zariski closure of the image of the universal Kummer map
$\kappa$ in (\ref{univkummer}):
\begin{equation}
\label{eq:univkummer4}
\mathcal{K}_3(2,4) \,\,\subset \,\,\,\mathcal{S} \,\,\times
\,\, \mathbb{P}^7 \,\,\subset \,\,\mathbb{P}^{7} \times \mathbb{P}^7.
\end{equation}
As before, we use the coordinates ${\bf u} = (u_{000}: u_{001}: \cdots : u_{111})$ on the 
first $\PP^7$ to parameterize the moduli of Kummer threefolds, and the coordinates 
${\bf x} = (x_{000}: x_{001}: \cdots : x_{111})$
on the second copy of $\PP^7$ to parameterize points of a particular Kummer threefold.
The theta version of the universal Kummer threefold has codimension five, and we have already constructed several polynomials in its defining bihomogeneous prime ideal
$\mathcal{I}_3$.
  First, the ideal   $\mathcal{I}_3$ contains the bidegree $(16,0)$ polynomial of the Satake hypersurface $\mathcal{S}$.  Second, since the Kummer threefold is the singular locus of the Coble quartic hypersurface, one has the eight partial derivatives of $F$ as in Theorem \ref{thm:thetacoble} with respect to $x_{ijk}$.
  These  have bidegree $(28,3)$.

Third and most important, there are additional generators of bidegree $(16,4)$
in $({\bf u} , {\bf x})$.
These play the same role as the $70$ equations of bidegree $(6,4)$
in $({\bf c} ,{\bf x})$ for the flex version in Conjecture~\ref{conj:uno}. We will show that there are $882$ such  additional minimal generators.

\begin{lemma}
\label{lem:july27}
There exists a polynomial $f$ of bidegree $(16,4)$
in the universal Kummer ideal $\mathcal{I}_3$, having the same form (\ref{eq:coblequartic}) as the Coble quartic, 
but now $r, s_\bullet, t_\bullet$ are polynomials of degree $16$ in ${\bf u}$.
These can be given explicitly by the formulas
\begin{align*}
& &  r \,=\,  s_{001} \,=\, s_{100} \,=\,  s_{101} \,= \, s_{110} \,=\,  s_{111} \,= \,
t_{010} \,=\,  t_{110} \,=\,  0  \qquad \qquad \qquad \qquad \\
s_{010} & = & 
(u_{000} u_{011} + u_{001} u_{010} -u_{100} u_{111} -u_{101} u_{110}) 
(u_{000} u_{011} + u_{001} u_{010} +u_{100} u_{111} +u_{101} u_{110})  \\* & &
(u_{000} u_{011} -u_{001} u_{010} +u_{100} u_{111}-u_{101} u_{110})
(u_{000} u_{011} -u_{001} u_{010} -u_{100} u_{111}+u_{101} u_{110})  \\* & & 
(u_{000} u_{010} u_{101} u_{111} - u_{001} u_{011} u_{100} u_{110}) 
(u_{000} u_{010} u_{100} u_{110} -u_{001} u_{011} u_{101} u_{111}) \\
s_{011} & = & 
-(u_{000} u_{010} +u_{001} u_{011}+u_{100} u_{110}+u_{101} u_{111})
  (u_{000} u_{010}+u_{001} u_{011}-u_{100} u_{110}-u_{101} u_{111}) \\*  & & 
\phantom{-}   (u_{000} u_{010}-u_{001} u_{011}+u_{100} u_{110}-u_{101} u_{111})
   (u_{000} u_{010}-u_{001} u_{011}-u_{100} u_{110}+u_{101} u_{111}) \\* & & 
    (u_{000} u_{011} u_{100} u_{111}-u_{001} u_{010} u_{101} u_{110}) 
    (u_{000} u_{011} u_{101} u_{110}-u_{001} u_{010} u_{100} u_{111}) \\ 
t_{001} & = & 
-(u_{000} u_{011}-u_{001} u_{010}-u_{100} u_{111}+u_{101} u_{110})
  (u_{000} u_{011}-u_{001} u_{010}+u_{100} u_{111}-u_{101} u_{110}) \\* & & 
 \phantom{-}  (u_{000} u_{011}+u_{001} u_{010}+u_{100} u_{111}+u_{101} u_{110})
    (u_{000} u_{011}+u_{001} u_{010}-u_{100} u_{111}-u_{101} u_{110}) \\* & & 
 (u_{000} u_{010} u_{101} u_{111}-u_{001} u_{011} u_{100} u_{110})
     (u_{000}^2 u_{010}^2-u_{001}^2 u_{011}^2+u_{100}^2 u_{110}^2-u_{101}^2 u_{111}^2) \\
t_{011} & = & 
 (u_{000} u_{010}-u_{001} u_{011}-u_{100} u_{110}+u_{101} u_{111})
 (u_{000} u_{010}-u_{001} u_{011}+u_{100} u_{110}-u_{101} u_{111}) \\* & & 
 (u_{000} u_{010}+u_{001} u_{011}-u_{100} u_{110}-u_{101} u_{111}) 
 (u_{000} u_{010}+u_{001} u_{011}+u_{100} u_{110}+u_{101} u_{111} \\* & & 
 (u_{000} u_{011} u_{101} u_{110} - u_{001} u_{010} u_{100} u_{111})
 (u_{000}^2 u_{011}^2-u_{001}^2 u_{010}^2+u_{100}^2 u_{111}^2-u_{101}^2 u_{110}^2) \\
t_{101} & = & 
-(u_{000} u_{011}+u_{001} u_{010}+u_{100} u_{111}+u_{101} u_{110})
  (u_{000} u_{011}-u_{001} u_{010}+u_{100} u_{111}-u_{101} u_{110}) \\* & & 
\phantom{-}   (u_{000} u_{011}+u_{001} u_{010}-u_{100} u_{111}-u_{101} u_{110})
    (u_{000} u_{011}-u_{001} u_{010}-u_{100} u_{111}+u_{101} u_{110}) \\* & & 
 (u_{000} u_{010} u_{100} u_{110}-u_{001} u_{011} u_{101} u_{111})
     (u_{000}^2 u_{010}^2-u_{001}^2 u_{011}^2-u_{100}^2 u_{110}^2+u_{101}^2 u_{111}^2) \\
t_{111} &  = & 
(u_{000} u_{010}-u_{001} u_{011}+u_{100} u_{110}-u_{101} u_{111})
 (u_{000} u_{010}-u_{001} u_{011}-u_{100} u_{110}+u_{101} u_{111}) \\* & & 
  (u_{000} u_{010}+u_{001} u_{011}+u_{100} u_{110}+u_{101} u_{111}) 
  (u_{000} u_{010}+u_{001} u_{011}-u_{100} u_{110}-u_{101} u_{111}) \\* & & 
   (u_{000} u_{011} u_{100} u_{111}-u_{001} u_{010} u_{101} u_{110})
    (u_{000}^2 u_{011}^2-u_{001}^2 u_{010}^2-u_{100}^2 u_{111}^2+u_{101}^2 u_{110}^2) \\
t_{100} &  =  & 
  [u_{000}^4 u_{010}^4 u_{100}^4 u_{111}^4]_4
+ [u_{000}^6 u_{010}^2 u_{011}^4 u_{100}^2 u_{110}^2]_{16}
+ 2 [u_{000}^4 u_{001}^2 u_{010}^2 u_{011}^4 u_{100}^2 u_{111}^2]_{12} \qquad \\* & &   
- [u_{000}^4 u_{011}^4 u_{100}^4 u_{110}^4]_4 
- [u_{000}^6 u_{010}^4 u_{011}^2 u_{100}^2 u_{111}^2]_{16}  
- 2 [u_{000}^4 u_{001}^2 u_{010}^4 u_{011}^2 u_{100}^2 u_{110}^2]_{12} \qquad \\* & & 
+ 4 [u_{000}^5 u_{001} u_{010}^5 u_{011} u_{100} u_{101} u_{110} u_{111}]_4 
- 4 [u_{000}^5 u_{001} u_{010} u_{011}^5 u_{100} u_{101} u_{110} u_{111}]_4 \qquad
\end{align*}
The last coefficient $t_{100}({\bf u})$ is irreducible and has $72$ terms.
In particular, $f$ is the sum of $1168$ monomials of degree $(16,4)$ in $({\bf u}, {\bf x})$.
\end{lemma}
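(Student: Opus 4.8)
The plan is to prove $f\in\mathcal{I}_3$ directly from the transcendental description $\mathcal{K}_3(2,4)=\overline{\kappa(\mathfrak{H}_3\times\mathbb{C}^3)}$: membership means that $f\bigl(\vartheta(\tau),\kappa_\tau(z)\bigr)=0$ identically, and since the non-hyperelliptic $\tau$ are dense in $\mathfrak{H}_3$ and $\mathcal{K}_3(2,4)$ is irreducible, it is enough to verify this for non-hyperelliptic $\tau$. The strategy is to convert this vanishing statement into a classical theta relation, along the same lines as the proof of Theorem~\ref{thm:thetacoble}(a) and of Example~\ref{ex:Kummsur}.

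First I would re-express the seven nonzero coefficients $s_{010}(\mathbf{u}),s_{011}(\mathbf{u}),t_{001}(\mathbf{u}),t_{011}(\mathbf{u}),t_{100}(\mathbf{u}),t_{101}(\mathbf{u}),t_{111}(\mathbf{u})$ in terms of first- and second-order theta constants. The linear factors of the form $u_{000}u_{011}\pm u_{001}u_{010}\pm u_{100}u_{111}\pm u_{101}u_{110}$ are, by the specialization \eqref{thetasquares2} of the addition theorem, the four even theta squares $\theta[011|\epsilon'](\tau;0)^2$ with $\epsilon'\in 011^\perp$; the factors with pairing pattern $\{000,010\},\{001,011\},\ldots$ are the even theta squares $\theta[010|\epsilon'](\tau;0)^2$; and the quartic factors $u_{000}u_{010}u_{101}u_{111}-u_{001}u_{011}u_{100}u_{110}$ and $u_{000}^2u_{010}^2-u_{001}^2u_{011}^2+\cdots$ are products and sums of such even theta squares together with the Riemann factors $\prod_{\epsilon'\in\epsilon^\perp}T^\epsilon_{\epsilon'}$ governed by Riemann's theta relations \eqref{eqn:riemannthetarel} (modulo the Satake polynomial $\mathcal{S}$, which is harmless since $\mathcal{S}\in\mathcal{I}_3$). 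In this way each of the seven degree-$16$ coefficients becomes, up to a scalar, an explicit monomial in the $T^\epsilon_{\epsilon'}$ and in $\prod_{\epsilon}T^0_\epsilon$ — the same kind of expression that Theorem~\ref{thm:thetacoble}(a) produces for the degree-$28$ Coble coefficients, but of lower degree because most of the $T$-factors have cancelled; the seven vanishing coefficients of $f$ correspond to those $m_i(\mathbf{x})$ whose degree-$28$ Coble coefficient does not survive this cancellation.

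Next I would substitute, on the Kummer threefold, the identities \eqref{thetasquares}, namely $\theta[\epsilon|\epsilon'](\tau;z)^2=\sum_\sigma(-1)^{\sigma\cdot\epsilon'}u_\sigma x_{\sigma+\epsilon}$, together with \eqref{doubledarg}, into the quartic $\sum_i\mathrm{coeff}_i(\mathbf{u})\,m_i(\mathbf{x})$. Re-expressing each $H$-invariant quartic $m_i(\mathbf{x})$ from \eqref{eq:coblequartic} via the inverse addition formula \eqref{eq:additioninv} as a combination of products of the first-order theta functions $\theta[\epsilon|\epsilon'](\tau;z)$, the asserted identity $f\equiv 0$ becomes a homogeneous polynomial relation among the first-order theta constants $T^\epsilon_{\epsilon'}$, the second-order theta constants $\mathbf{u}$, and the first-order theta functions of $z$, with no free parameters. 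Every such relation is a consequence of Riemann's theta relations \eqref{eqn:riemannthetarel}, and I would verify it by the same case analysis over the affine geometry of $\mathbb{F}_2^3$ that drives the proof of Theorem~\ref{thm:thetacoble}. In practice the most efficient route — and the one I would actually take — is the computational one already described in the Remark after Theorem~\ref{thm:thetacoble}: sample enough generic points $(\tau_i,z_i)$, evaluate the $1168$ monomials at $\kappa(\tau_i,z_i)$, confirm numerically that $f$ vanishes, and then certify $f\in\mathcal{I}_3$ by exact linear algebra over $\mathbb{Q}$ inside the $1168$-dimensional span of those monomials.

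With $f$ shown to lie in $\mathcal{I}_3$, the remaining assertions are bookkeeping: multiplying out the seven coefficient polynomials against the seven quartics $m_i(\mathbf{x})$ — whose monomial supports are pairwise disjoint — and collecting yields exactly $1168$ monomials of bidegree $(16,4)$, with $t_{100}(\mathbf{u})$ contributing its $72$ terms; and the irreducibility of the $72$-term polynomial $t_{100}(\mathbf{u})$ is a single invocation of a multivariate factorization algorithm, over $\mathbb{Q}$ or over $\mathbb{Z}/101$ with a lift as elsewhere in the paper. I expect the main obstacle to be the first conceptual step: identifying the correct degree-$16$ coefficients and pinning down the precise dictionary between their linear and quartic factors and products of even theta squares and Riemann factors over $\mathbb{F}_2^3$ — that is, recognising $f$ as the canonical ``reduction mod $\mathcal{S}$'' of the degree-$28$ Coble polynomial once all inessential theta factors drop out. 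Once that dictionary is fixed, the verification is a mechanical application of Riemann's theta relations.
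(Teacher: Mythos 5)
Your proposed route misidentifies where $f$ comes from. You conjecture that the seven nonzero degree-$16$ coefficients arise by ``cancelling inessential theta factors'' from the degree-$28$ Coble coefficients of Theorem~\ref{thm:thetacoble}, so that $f$ is ``the canonical reduction mod $\mathcal{S}$ of the degree-$28$ Coble polynomial.'' This cannot be right: the Coble quartic has all fifteen coefficients $r(\mathbf{u}),s_\bullet(\mathbf{u}),t_\bullet(\mathbf{u})$ nonzero (generically), while $f$ has eight of its fifteen coefficients identically zero. No division by a common $\mathbf{u}$-factor — in $\mathbb{Q}[\mathbf{u}]$ or in $\mathbb{Q}[\mathbf{u}]/\langle\mathcal{S}\rangle$ — can turn a vector with all entries nonzero into one with eight exact zeros, so $f$ is a genuinely different element of the bidegree $(\cdot,4)$ slice of $\mathcal{I}_3$, not a reduction of the Coble polynomial. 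Your reading of the linear factors as the four even theta-squares $\theta[011|\epsilon']^2$, $\epsilon'\in 011^\perp$, via \eqref{thetasquares2} is correct and a useful observation, but it does not by itself supply the quartic factors, the coefficient $t_{100}$ (which is irreducible, not a product), or the reason the eight vanishing coefficients vanish.

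The paper's actual mechanism, which your proposal does not reach, is a degeneration from genus $4$: start from a known quartic relation \eqref{eqn:genus4} among \emph{genus-four} first-order theta constants (itself deducible from genus-two relations via \cite[Prop.~4.18]{vangeemen}); multiply by its seven sign-conjugates to get a degree-$16$ relation among squares; re-express in the sixteen genus-four second-order theta constants $u_{0000},\ldots,u_{1111}$; then apply the Fourier--Jacobi expansion of \eqref{eq:fourier} to pass to the initial form in the genus-three variables $(\mathbf{u},\mathbf{x})$. This yields a polynomial of bidegree $(28,4)$ with $12268$ terms that is by construction in $\mathcal{I}_3$, and that polynomial factors as a degree-$(12,0)$ polynomial (nowhere zero on $\mathcal{S}$) times the claimed $f$. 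That genus-$4$ input and the Fourier--Jacobi degeneration are the essential content of the proof, and they have no analogue in your outline. Finally, your fallback to numerical sampling is, as the paper itself frames it (Remark~\ref{rmk:piazza} and the Remark following the lemma), a sanity check rather than a proof: a finite set of floating-point evaluations of theta functions cannot certify exact membership in $\mathcal{I}_3$, and without identifying where $f$ comes from you also have no rigorous handle on the dimension of $(\mathcal{I}_3)_{(16,4)}$ against which to run exact linear algebra.
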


Bert van Geemen informed us that the polynomial $f$ above
admits a determinantal representation similar to   (\ref{universalk2}).
  That determinant will be derived in a forthcoming paper of~his.

\begin{proof}
We start with the following quartic relation  among the theta constants of {\em genus four}:
\begin{equation}
\begin{split}
\label{eqn:genus4}
&\phantom{-}\,\, \theta[0010|0001](\tau;0) \cdot \theta[0010|1001](\tau;0) \cdot \theta[0010|0101](\tau;0)
\cdot \theta[0010|1101](\tau;0)\\
&-\theta[0011|0011](\tau;0) \cdot \theta[0011|1011](\tau;0)
\cdot \theta[0011|0111](\tau;0) \cdot \theta[0011|1111](\tau;0)\\
&- \theta[0010|0000](\tau;0) \cdot \theta[0010|1000](\tau;0) \cdot \theta[0010|0100](\tau;0)
\cdot \theta[0010|1100](\tau;0)\\
&+ \theta[0011|0000](\tau;0) \cdot \theta[0011|1000](\tau;0)
\cdot \theta[0011|0100](\tau;0) \cdot \theta[0011|1100](\tau;0)\,\,=\,\,0.
\end{split}
\end{equation}
Genus four identities of this form can be derived from {\em genus two} theta relations using
 \cite[Proposition 4.18]{vangeemen}. Such relations hold identically in $\tau\in\mathfrak{H}_4$, not just on the Schottky locus.  
 
The main computation is to express the relation (\ref{eqn:genus4})
  in terms of the genus four moduli variables  ${\bf u}$.
To do this,  we first turn (\ref{eqn:genus4}) into a relation between squares of the $\theta[\epsilon|\epsilon'](\tau;0)$.  This can be done by taking the product of the relation (\ref{eqn:genus4}) with its seven conjugates, where all possible signs in front of each term are chosen.
This gives a polynomial of degree $16$ in the $\theta^2[\epsilon|\epsilon'](\tau;0)$.   We next apply  the genus four version of \eqref{thetasquares2}.  This writes
   the squares of the first order theta constants as quadrics in the
   $16$ moduli variables $u_{0000}, u_{0001}, \ldots, u_{1111}$. The result is a huge homogeneous polynomial of degree $32$ in  
   these $16$ variables. 

To this polynomial we now   apply the  Fourier--Jacobi expansion technique 
as in (\ref{eq:fourier}). This replaces the polynomial above by one of
its initial forms, but now in the $16$ unknowns $u_{ijk}, x_{ijk}$.
The result is a polynomial of degree $(28,4)$ having $12268$  terms.
It lies in the universal Kummer ideal $\mathcal{I}_3$ and is  expressible as a 
$\mathbb{Q}[{\bf u}]$-linear combination of the $15$ degree four invariants.
Remarkably, this degree $(28,4)$ polynomial 
turns out to be reducible. It has an extraneous factor of degree $(12,0)$.
That factor is a polynomial in ${\bf u}$ which  cannot vanish on $\mathcal{S}$.
Dividing out this factor gives the desired bidegree $(16,4)$ polynomial $f \in \mathcal{I}_3$. 
\end{proof}

The action by the modular group ${\rm Sp}_6(\mathbb{Z})$ on $\mathbb{C}[\mathbf{u},\mathbf{x}]$ induced by the action on $\mathbb{P}^7\times{}\mathbb{P}^7$ preserves the ideal $\mathcal{I}_3$.  On the space of Heisenberg invariant polynomials, the principal congruence subgroup $\Gamma_3(2)$ acts trivially, so there is a well-defined action of the quotient group $\text{Sp}_6(\mathbb{F}_2)={\rm Sp}_6(\mathbb{Z})/\Gamma_3(2)$, 
cf.~(\ref{eqn:gamma2}). For a detailed discussion of the relevant
 representation theory, see \cite{projectiverep}. However, the polynomial $f$ in Lemma \ref{lem:july27} is not invariant under this action. Therefore, we  obtain more polynomials in $\mathcal{I}_3$ by applying this action to $f$.

\begin{lemma}
The orbit of the polynomial $f$ in Lemma~\ref{lem:july27} under the action of ${\rm Sp}_6(\F_2)$ contains exactly $945$ elements. They span a vector space over $\mathbb{C}$ of dimension $882$.
\end{lemma}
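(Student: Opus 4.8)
The plan is to prove this by a direct orbit computation under the linear action of $\mathrm{Sp}_6(\mathbb{F}_2)=\mathrm{Sp}_6(\mathbb{Z})/\Gamma_3(2)$ on the bidegree $(16,4)$ part of $\mathbb{Q}[\mathbf{u},\mathbf{x}]$, exactly as described just before Lemma~\ref{lem:july27}. Since $f$ has the Coble shape \eqref{eq:coblequartic} in the $\mathbf{x}$-variables, it lies in the subspace of polynomials that are Heisenberg-invariant in $\mathbf{x}$; on this space $\Gamma_3(2)$ acts trivially, so $\mathrm{Sp}_6(\mathbb{F}_2)$ acts genuinely linearly and preserves $\mathcal{I}_3$. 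Concretely, an element of this $15$-dimensional free $\mathbb{Q}[\mathbf{u}]_{16}$-module is encoded by its coefficient vector $(r(\mathbf{u}),s_\bullet(\mathbf{u}),t_\bullet(\mathbf{u}))$, and the group acts by simultaneously applying the signed-permutation action on $\mathbf{u}$ and the induced action on the $15$ Coble directions through the irreducible representation $U^{15}$. First I would import explicit generators of this action from the supplementary files --- the same lifts of $\mu$ and $\nu$ to the $15$ Coble coefficients used in the proof of Theorem~\ref{thm:gopel}, paired with their action on $\mathbf{u}$ --- and check that they generate a group of the correct order $|\mathrm{Sp}_6(\mathbb{F}_2)|=1451520$. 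Getting the signs right here (so that the action is truly linear, not merely projective, and preserves $\mathcal{I}_3$) is the one delicate point of setup.

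Next I would enumerate the orbit of $f$ by breadth-first search over these generators, starting from the coefficient vector given in Lemma~\ref{lem:july27} and collecting distinct polynomials, working over $\mathbb{Q}$ (a faster run over $\mathbb{Z}/101$ can guide the search, with the final count confirmed over $\mathbb{Q}$). The claim is that this terminates with exactly $945$ elements; as an arithmetic check, $945\cdot 1536=1451520$, so the stabilizer of $f$ has order $1536=2^9\cdot 3$. Alternatively, and more conceptually, one can pin down the orbit size by exhibiting a subgroup of order $1536$ fixing $f$ --- the subgroup preserving the precise shape of $f$, namely the vanishing of $r,s_{001},s_{100},s_{101},s_{110},s_{111},t_{010},t_{110}$ together with the seven surviving coefficients --- which forces $|\mathrm{orbit}|\le 945$, and then producing $945$ distinct orbit elements forces equality.

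For the dimension of the span, I would build the $945\times N$ matrix $M$ whose rows are the coefficient vectors of the orbit elements expressed in the monomial basis of $\mathbb{Q}[\mathbf{u},\mathbf{x}]_{(16,4)}$, where $N$ is the (finite, and in practice quite small) number of monomials that actually occur across the orbit; each orbit element has only $1168$ terms, far fewer than the ambient $\binom{23}{7}\cdot 15$. Row reduction over $\mathbb{Q}$ then yields $\operatorname{rank}(M)=882$, and since the span is defined over $\mathbb{Q}$ we get $\dim_{\mathbb{C}}=\dim_{\mathbb{Q}}=882$. The deficit $945-882=63$ records that the orbit satisfies $63$ independent linear relations; one expects these to be indexed by the $63$ nonzero vectors of $(\mathbb{F}_2)^6$, and identifying them explicitly would upgrade the dimension count to a structural statement, but it is not needed. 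In summary, the only real obstacle is bookkeeping: implementing the $\mathrm{Sp}_6(\mathbb{F}_2)$-action on $\mathbb{Q}[\mathbf{u},\mathbf{x}]$ with correct signs, and keeping the orbit enumeration and the final linear-algebra step within feasible size --- both routine given the group matrices already assembled for Section~\ref{sec:gopeleqns}.
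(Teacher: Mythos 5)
Your overall strategy (find the stabilizer order, count the orbit, then bound the rank of a $945\times N$ coefficient matrix) matches the shape of the paper's argument, but the computational plan you lay out is exactly the ``naive'' one that the authors explicitly state is infeasible, and several of the assumptions you use to argue feasibility are false. Two specific misconceptions are doing real damage. First, the action of $\mathrm{Sp}_6(\mathbb{F}_2)$ on the $\mathbf{u}$-coordinates is \emph{not} by signed permutations: the generator $\nu'$ is built from the discrete Fourier transform $\gamma_3\colon a_\sigma\mapsto\sum_\rho(-1)^{\rho\cdot\sigma}a_\rho$, and $\mu'$ involves $i=\sqrt{-1}$, so the representation lives over $\mathbb{Q}(i)$, not $\mathbb{Q}$, and only a small subgroup acts monomially. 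Consequently your claim that ``the span is defined over $\mathbb{Q}$'' is wrong, and --- more seriously --- your claim that ``each orbit element has only $1168$ terms'' is wrong: applying $\gamma_3$ smears a polynomial across the monomial basis, so the orbit contains polynomials with vastly more terms. The paper says this outright (``This is challenging because some of the polynomials in the orbit are very large''), which is why it never expands the orbit symbolically.

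Because of this, the actual proof replaces your BFS-and-row-reduce plan with an implicit representation: each orbit element $f\circ\alpha$ is recorded only by its vector of values $(f(\alpha(\mathbf{u}_j,\mathbf{x}_j)))_j$ at $N$ random points over a finite field, which yields $945$ distinct projective points (a lower bound on orbit size, made exact via the $1536$-element stabilizer) and a $945\times N$ evaluation matrix of rank $882$ (a lower bound on the span dimension). You treat the remaining direction --- showing the rank is \emph{not more} than $882$ --- as optional (``identifying them explicitly\dots is not needed''), but with only finite-field evaluations in hand it is precisely what is needed: the paper proves the upper bound by finding a $15$-term relation $\sum_{j=1}^{15}\pm f\circ g_j=0$, verifying it over $\mathbb{Q}(i)$, and pushing it around by $\mathrm{Sp}_6(\mathbb{F}_2)$ to get $63$ independent relations. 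Finally, your stabilizer argument is too loose: preserving the vanishing pattern of $f$ is not the same as fixing $f$, and the paper instead exhibits a concrete order-$512$ monomial subgroup (from $\gamma_1(A)$ with $A$ unipotent lower triangular and all $\gamma_2(B)$) together with one non-monomial element of order $3$, giving a stabilizer of order at least $1536$ and hence orbit size exactly $945$.
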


\begin{proof}
In what follows, we will use a right action of $\mathrm{Sp}_6(\F_2)$. Recall that on the projective coordinates $({\bf u}, {\bf x})$, generators of ${\rm Sp}_6(\F_2)$ of the form $\gamma_1(A)=\begin{pmatrix}
A&0\\
0&(A^{-1})^t
\end{pmatrix}$
act by
$$
\gamma_1(A)\circ a_\sigma\,=\, a_{A\sigma}  \qquad \hbox{for} \,\,\, a \in \{u,x\}.
$$
Generators $\gamma_2(B)=\begin{pmatrix}
1&B\\
0&1
\end{pmatrix}$ with $B^t=B$ act by
$$
\gamma_2(B)\circ a_\sigma\,=\,
e^{\frac{\pi i \sigma^t B\sigma}{2}} a_\sigma \qquad \hbox{for} \,\,\, a \in \{u,x\},
$$
and the Weyl element $\gamma_3=\begin{pmatrix}
0&-1\\
1&0
\end{pmatrix}$
acts by the {\em finite Fourier transform}
$$
\gamma_3\circ a_\sigma \,=
\, \sum_{\rho\in\mathbb{F}_2^3}(-1)^{\rho\cdot\sigma}a_\rho   \qquad \hbox{for} \,\,\, a \in \{u,x\}.
$$

Alternatively, the group ${\rm Sp}_6(\F_2)$ is generated by the following two elements $\mu{}'$ and $\nu{}'$:
\begin{align}
\mu{}' &= \gamma_2(B)\gamma_1(A) \qquad \qquad \qquad \qquad
\text{ for } B=
\begin{pmatrix}1&0&0\\0&0&0\\0&0&0\end{pmatrix},\ A=\begin{pmatrix}1&0&0\\0&1&0\\1&0&1\end{pmatrix}
,\\
\nu{}' &= \gamma_1(\tilde A)\gamma_2(\tilde B)\gamma_3\gamma_2(\tilde B)\gamma_3\gamma_2(\tilde B)
\qquad  \text{ for } \tilde A=\begin{pmatrix}0&0&1\\1&0&0\\0&1&0\end{pmatrix},\
\tilde B=\begin{pmatrix}0&0&0\\0&0&0\\0&0&1\end{pmatrix}.
\end{align}
These two generators correspond to  $\mu{}$ and $\nu{}$ in (\ref{equation:two_generators}). They act on $\mathbb{C}[\mathbf{u},\mathbf{x}]$ by the substitutions
\begin{align*}
\mu{}' \,: \,\,\,
\begin{matrix}
 a_{000} \mapsto  a_{000}, a_{001}\mapsto  a_{001}, 
  a_{100}\mapsto   i \cdot a_{101},
  a_{101}\mapsto   i \cdot a_{100}, \\
 a_{010}\mapsto  a_{010},
 a_{011} \mapsto  a_{011} ,
  a_{110} \mapsto   i \cdot a_{111}, 
  a_{111}\mapsto   i \cdot a_{110}
    \end{matrix}
  \qquad \hbox{for} \,\,\, a \in \{u,x\}, \, i = \sqrt{-1}, \quad \\
\nu{}' : \,
\begin{matrix}
a_{000}\mapsto  a_{000} {+} a_{100}, a_{001}\mapsto  a_{000} {-} a_{100}, 
a_{010} \mapsto  a_{001} {+} a_{101}, a_{011}\mapsto  a_{001} {-} a_{101} \\
a_{100}\mapsto  a_{010} {+} a_{110}, a_{101}\mapsto  a_{010} {-} a_{110}, 
a_{110}\mapsto  a_{011} {+} a_{111}, a_{111}\mapsto  a_{011} {-} a_{111}
\end{matrix}
 \quad \hbox{for} \,\, a \in \{u,x\}.
\end{align*}

We can try to generate the orbit by applying the generators $\mu{}'$ and $\nu{}'$ successively. This is challenging because some of the polynomials in the orbit are very large. Instead, 
for some $N \geq 2$, we choose random vectors $(\mathbf{u}_j,\mathbf{x}_j)$ over a finite field, and we evaluate
\begin{equation}\label{equation:evaluate_orbit}
\bigl( f(\alpha (\mathbf{u}_1, \mathbf{x}_1)) : \dotsb{}:f(\alpha{}(\mathbf{u}_N,\mathbf{x}_N)) \bigr),
\end{equation}
where $\alpha$ is the product of a sequence of $\mu{}'$s and $\nu{}'$s. In this way, we get $945$ distinct 
points in $\PP^{N-1}$. Therefore, the orbit
of $f$ under ${\rm  Sp}_6(\mathbb{F}_2)$ has at least $945$ elements.

To prove that the number $945$ is exact, we study the stabilizer of $f$ in ${\rm Sp}_6(\F_2)$. It suffices to show that the stabilizer has at least $|{\rm Sp}_6(\F_2)|/945=1536$ elements. It can be seen from the explicit form of $f$ in Lemma~\ref{lem:july27} that the stabilizer contains $\gamma_1(A)$ for the $8$ unipotent lower triangular $3\times{}3$-matrices
$A$, and all of the $64$ elements $\gamma_2(B)$. These elements generate a
group of order $8\cdot 64=512$. Note that all of these elements act on $\mathbb{P}^7\times{}\mathbb{P}^7$ by
signed permutations on the coordinates. We found another element in
the stabilizer, namely
\begin{equation}
\nu{}'\gamma_1(A) \quad \text{ for }\,\,\, A=\begin{pmatrix}0&1&0\\0&0&1\\1&0&0\end{pmatrix}.
\end{equation}
This element of order $3$
 does not act by signed permutations on the coordinates and thus is not in
our subgroup of order $512$. Since the size of the stabilizer divides $|{\rm Sp}_6(\F_2)|=1451520$, it must be at least $512 \cdot 3=1536$. This shows the orbit has size $945$. Alternatively, we can do explicit calculations with these matrices in {\tt GAP} and establish the exact size of the subgroup. 

We generated the $945$ polynomials of degree $(16,4)$ in implicit form  as in
(\ref{equation:evaluate_orbit}). Due to the size of the problem, it is not feasible to compute the dimension of the vector space spanned by the orbit of $f$ directly over $\mathbb{Q}(i)$. Therefore,
we evaluated them at random configurations of $N \geq 945$ points over various finite fields.
In each experiment, the resulting $945 \times N$-matrix 
 has rank $882$. Hence the $\mathbb{C}$-vector space spanned by the $945$ orbit elements has dimension at least $882$.

To prove that the number $882$ is exact, we identified exactly $945-882=63$ linearly independent relations among the $945$ orbit elements. Fortunately, the relations are rather simple. There exist $15$ distinct group elements $g_1,\dotsc, g_{15}\in {\rm Sp}_6(\F_2)$ such that
\begin{equation}\label{equation:orbit_relation}
\sum_{j=1}^{15} \pm f\circ g_j=0.
\end{equation}
This relation is found in the following way: first take a set $\mathcal{B}$ of $882$ linearly independent elements of the form $f\circ{}g$. The complement $\mathcal{B}^c$ contains $63$ elements. Take any $g_1\in{}\mathcal{B}^c$. For each $f\circ{}g\in{}\mathcal{B}$, test by computation in a finite field if the $882$ elements in $\mathcal{B}\backslash{}\{f\circ{}g\}\cup{}\{f\circ{}g_1\}$ are still linearly independent. It turns out that only $14$ out of the $882$ elements satisfy this property. These elements are $f\circ{}g_2,\dotsc{},f\circ{}g_{15}$. Then, it is computationally feasible to find and verify the relation (\ref{equation:orbit_relation}) over $\mathbb{Q}(i)$ due to the reduced size of the problem. The complete list is found in our supplementary materials. Applying ${\rm Sp}_6(\F_2)$ to (\ref{equation:orbit_relation}) gives $63$ linearly independent relations since no two relations involve the same conjugate of~$f$.
\end{proof}

\begin{remark} \rm 
We can verify that our polynomials lie in $\mathcal{I}_3$ using direct numerical computations.
Indeed, by running Swierczewski's code for theta functions in {\tt Sage} \cite{sage},
we can generate arbitrarily many points $({\bf u}, {\bf x})$ on the universal Kummer 
variety $ \mathcal{K}_3(2,4)$. The polynomial with $1168$ terms 
was shown to vanish on all of them. Likewise, we can check numerically that this vanishing property is preserved under the two substitutions $\mu{}', \nu{}'$. 
\end{remark}

\begin{remark} \rm
The space spanned by the $\mathrm{Sp}_6(\F_2)$-orbit of $f$ is a representation of $\mathrm{Sp}_6(\F_2)$.
It is equivalent to a subrepresentation of the representation induced from the trivial representation of the stabilizer subgroup of the polynomial $f$ from Lemma~\ref{lem:july27}. According to a {\tt GAP} calculation,
the irreducible subrepresentations of that $945$-dimensional induced
$\mathrm{Sp}_6(\F_2)$-representation have the following dimensions: 
$\,
1,27,27,35,35,84,120,168,168,280$.

Here the two occurrences of 27 and 35 refer to an irreducible representation that appears with multiplicity 2. Note that $945 - 882 = 63 =  1 + 27  +35$, and this is the unique way to build 63 from the dimensions of these irreducible summands. So we can use this to determine the structure of these 882 equations as a representation of $\mathrm{Sp}_6(\F_2)$. \qed
\end{remark}

We do not completely understand what happens
to the universal Kummer threefold $\mathcal{K}_3(2,4)$ 
and the generators found above when we restrict the 
base to one of the $36$ hyperelliptic divisors in $\mathcal{S}$.
The issue is that, for fixed $\tau$ in the hyperelliptic locus, the ideal of the Kummer threefold has a different structure.  It can be seen from equations (\ref{equation:coble_r}), (\ref{equation:coble_s}) and (\ref{equation:coble_t}) that the vanishing of a theta constant $T^\epsilon_{\epsilon'}$ causes the 
universal
Coble quartic to become the product of a monomial in the remaining non-zero theta constants times the square of a quadric with integer coefficients in the ${\bf x}$ variables.  For instance, if $T^0_0=0$ then 
\begin{equation}
\label{eq:squareofquadric}
F_\tau \,\, = \,\,\left(x_{000}^2+x_{001}^2+x_{010}^2+x_{011}^2+x_{100}^2+x_{101}^2+x_{110}^2+x_{111}^2 \right)^2\cdot
\!\! \prod_{\epsilon\in\mathbb{F}_2^3\setminus\{0\}}(T^0_{\epsilon'})^4.
\end{equation}
Hence over the hyperelliptic locus, the prime ideal of a Kummer threefold contains a quadric. This can also be seen from the vanishing of a theta
constant $\theta[\epsilon|\epsilon'](\tau;0)$ in equation (\ref{doubledarg}).
That quadric generates 8 linearly independent cubics and 36 linearly independent quartics.  The additional $34$ quartic relations were identified recently by M\"uller \cite[Theorem 4.2]{Muller}.

\smallskip

We optimistically conjecture that the equations we have found so far 
are sufficient:

\begin{conjecture}
\label{conj:due}
The prime ideal $\mathcal{I}_3$ of the universal Kummer variety
(\ref{eq:univkummer4}) is generated by $891$
bihomogeneous polynomials in $({\bf u}, {\bf x})$:
the Satake polynomial of degree $(16,0)$, the eight
Coble derivatives of degree $(28,3)$, and the $882$
polynomials of degree $(16,4)$.
\end{conjecture}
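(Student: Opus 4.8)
The plan is to show that the ideal $J$ generated by the $891$ listed polynomials — the Satake polynomial $\mathcal{S}$ of bidegree $(16,0)$, the eight Coble derivatives $\partial F/\partial x_{ijk}$ of bidegree $(28,3)$, and the $882$ polynomials of bidegree $(16,4)$ — equals the prime ideal $\mathcal{I}_3$. Since every one of the $891$ generators has been checked to lie in $\mathcal{I}_3$, we have $J\subseteq\mathcal{I}_3$, so it suffices to prove that $J$ is itself prime. Following the pattern of the proofs of Theorem~\ref{thm:gopel} and Theorem~\ref{thm:thetacoble}, I would establish: (i) the scheme $V(J)\subset\PP^7\times\PP^7$ is irreducible of dimension $9$, hence equals $\mathcal{K}_3(2,4)$ set-theoretically; (ii) $J$ is generically reduced; (iii) $J$ satisfies Serre's condition $(S_1)$, i.e.\ has no embedded associated primes. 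By \cite[Exercise 11.10]{eisenbud}, (ii) and (iii) force $J$ to be radical, and together with (i) this yields $J=\mathcal{I}_3$.

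Step (ii) is a routine Jacobian check: substitute a general point of $\mathcal{K}_3(2,4)$, obtained by composing a random point of the base with the degree-$28$ formulas for $r,s_\bullet,t_\bullet$ from Theorem~\ref{thm:thetacoble} and a random point of the associated Kummer threefold, into the Jacobian of the $891$ generators and verify that its rank equals the codimension $5$. Step (i) would go through the projection $\pi\colon V(J)\to\mathcal{S}=\overline{\vartheta(\mathcal{A}_3(2,4))}$. Over the non-hyperelliptic locus $\mathcal{U}\subset\mathcal{M}_3(2,4)$ the Coble derivatives cut out exactly the singular locus of the smooth fibre $C_\tau$, which by Proposition~\ref{prop:CC} is the Kummer threefold $\kappa_\tau(A_\tau)$; conversely all $891$ generators vanish on $\mathcal{K}_3(2,4)$, so over $\mathcal{U}$ the fibre of $\pi$ is precisely that Kummer threefold, and $\overline{\pi^{-1}(\mathcal{U})}=\mathcal{K}_3(2,4)$ is an irreducible nine-dimensional component. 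The remaining task is to rule out spurious components lying over the hyperelliptic locus of $\mathcal{S}$ (dimension $5$) or over its Satake boundary (dimension $3$): it is enough to show the fibre of $\pi$ over any point of those loci still has dimension at most $3$, so that their preimages have dimension at most $8<9$. This is exactly where the $882$ quartics are indispensable: over a hyperelliptic point the Coble quartic degenerates as in \eqref{eq:squareofquadric} into the square of a quadric, whose naive singular locus is a six-dimensional quadric, so the eight Coble derivatives alone vanish on far too much; one must verify that the specializations of the $882$ bidegree-$(16,4)$ polynomials, together with the $64$ quartics $x_{ijk}\,\partial F/\partial x_{i'j'k'}$, cut this quadric down to the three-dimensional Kummer threefold, using the refined description of the Kummer ideal over the hyperelliptic locus recorded after \eqref{eq:squareofquadric} and the extra $34$ quartic relations of M\"uller \cite[Theorem 4.2]{Muller}.

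The hard part will be step (iii). Unlike the G\"opel variety or the universal Coble quartic, the universal Kummer threefold is \emph{not} arithmetically Cohen--Macaulay: already the generic fibre has intermediate cohomology (see the Betti table after Proposition~\ref{prop:CC}), and this defect propagates to the total space, so the Cohen--Macaulay shortcut available for $\mathcal{G}$ and $\mathcal{C}(2,4)$ does not apply and one must rule out embedded primes directly. I see three plausible routes, none routine. First, a brute-force Gr\"obner-basis computation of the Hilbert series of $J$, even modulo a well-chosen prime, and a comparison with $\mathcal{I}_3$ obtained by saturating $\langle\mathcal{S},\partial_\sigma F\rangle$; this is out of reach with current software, which is precisely why the statement is a conjecture. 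Second, one could try to transport the problem to the flex version $\mathcal{K}_3^{\rm flex}\subset\PP^6\times\PP^7$ of Conjecture~\ref{conj:uno}, where the $70$ quartics arise from the degeneracy-locus machinery of \cite{gsw} as the ideal of the orbit closure $O_4$ in $\bigwedge^3\C^7$, and then descend along the rational covers $\PP^6\dashrightarrow\mathcal{G}$ and $\mathcal{S}\dashrightarrow\mathcal{G}$ — but matching prime ideals across these merely rational $24\!:\!1$ and $64\!:\!1$ covers is delicate, and the G\"opel version in between is not yet known explicitly. Third, and most conceptual, one could attempt to globalize over $\mathcal{S}$ (or over $\mathcal{A}_3(2,4)$) the free resolution of the $\theta$-degeneracy locus from \cite[\S 6]{gsw}: acyclicity of the globalized complex would give $(S_1)$ — indeed control of the full depth — and would simultaneously identify the generators of $J$ as the low-degree part of that resolution. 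Carrying this out requires understanding how the resolution over a single curve behaves as the relevant section leaves the expected orbit stratification, which is again the hyperelliptic-and-boundary degeneration analyzed in step (i). I expect this degeneration analysis, rather than any of the symbolic calculations, to be the genuine obstacle to a complete proof.
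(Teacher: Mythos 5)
The statement you are addressing is labeled a \emph{Conjecture} in the paper, and the paper offers no proof — only a derivation of the $891$ candidate generators and evidence that they all lie in $\mathcal{I}_3$. Your proposal correctly recognizes this and is candid that it is a plan rather than a proof: you identify the three ingredients (correct set-theoretic locus, generic reducedness, $(S_1)$), and you correctly diagnose the crux. The Cohen--Macaulay shortcut that drives the radicality arguments in Theorem~\ref{thm:gopel} and Theorem~\ref{thm:thetacoble}(c) cannot work here, because the generic fibre of $\mathcal{K}_3$ already fails to be arithmetically Cohen--Macaulay (see the Betti table displayed after Proposition~\ref{prop:CC}), so $(S_1)$ must be attacked directly, and none of the three routes you sketch currently closes. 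This is consistent with the paper's decision to leave the statement open, so your self-assessment is sound.

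One concrete gap in your step (i): you propose to rule out spurious components only over the hyperelliptic locus (dimension $5$) and the Satake boundary (dimension $3$), but you omit the Torelli boundary, which has dimension $4$ in $\mathcal{S}$ and is in fact the most delicate stratum. By Theorem~\ref{thm:thetacoble}(b), \emph{all fifteen} Coble coefficients $r(\mathbf{u}),s_\bullet(\mathbf{u}),t_\bullet(\mathbf{u})$ vanish identically on the Torelli boundary, so over that locus the polynomial $F$ and all eight Coble derivatives restrict to zero in $\mathbb{Q}[\mathbf{x}]$. They impose no conditions whatsoever on the fibre, and the whole burden of cutting the fibre down below dimension $4$ (needed for $4 + 4 < 9$) falls on the $882$ degree-$(16,4)$ polynomials and the Satake polynomial alone. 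It is not clear a priori that those $882$ polynomials do not also vanish on the Torelli boundary — they arise from the same theta-constant identities as the Coble coefficients — and if they did vanish there the scheme $V(J)$ would acquire an $11$-dimensional component $({\rm Torelli\ boundary})\times\PP^7$, falsifying even the set-theoretic claim. Any serious attempt must address this stratum first. A secondary, smaller point: the auxiliary quartics $x_{ijk}\,\partial F/\partial x_{i'j'k'}$ you invoke have bidegree $(28,4)$, not $(16,4)$, so over the full base they are not part of the degree-$(16,4)$ space; they only coincide with quartics in $\mathbf{x}$ fibrewise, and it would be cleaner to phrase that portion of the argument fibre by fibre over fixed $\tau$.
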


\begin{remark}  \label{rmk:piazza}
 \rm 
In their recent work \cite{dpsm}, Dalla Piazza and Salvati Manni have obtained the defining polynomial of the universal Coble quartic directly from the Fourier--Jacobi expansion of a certain relation among theta constants in genus four.  They also obtain another polynomial in the defining ideal of $\mathcal{K}_3(2,4)$.  At present, we have not determined whether this latter polynomial is contained in the ideal described in Conjecture \ref{conj:due}. \qed
\end{remark}

What remains to be done is to better integrate the generic 
case and the hyperelliptic case. This will be crucial for understanding the
relationship between our $70$ extra quartics 
over $\PP^6$ and our $882$ extra quartics over $\mathcal{S}$.
How can we construct these over $\mathcal{G}$?
How can we  lift the map (\ref{eq:64to1revisited}) to the universal Kummer threefolds?
We expect that the study of hyperelliptic
moduli in \cite{FS} and its Macdonald representation of $S_8$
will be relevant here.

\smallskip

In closing we remark that the Fourier--Jacobi method was the key to
success in Lemma~\ref{lem:july27}. Here the systematic passage to 
a non-trivial initial form was driven by a
 toric (or {\em sagbi}) degeneration as in (\ref{eq:fourier}). This is
 fundamental also for tropical geometry, our next topic.

 \section{Next steps in tropical geometry} \label{sec:tropgeom}

We now take a look at Kummer threefolds and their moduli through the lens
of tropical algebraic geometry \cite{Melody, CMV, DFS, HKT, HJJS,  MZ, SW}.
Each of our ideals defines a tropical variety, which is a balanced polyhedral fan.
These fans represent
compactifications of our varieties and moduli spaces, and they allow us to understand
what happens when the  field $\C$ of complex numbers
gets replaced by  a field $K$ with a non-trivial non-archimedean valuation.
This section serves as a manifesto in favor of
explicit polynomial equations. They are essential
for understanding the combinatorics that links
classical and tropical moduli spaces of curves.
This perspective is developed further in
the subsequent article \cite{RSS}.

\smallskip

We begin our discussion with the six-dimensional G\"opel variety $\mathcal{G}$.
Recall from Theorem~\ref{thm:gopel} that $\mathcal{G}$ sits inside $\PP^{14}$
where it is defined by $35$ cubics and $35$ quartics.
The tropicalization of this irreducible variety is a pure six-dimensional
polyhedral fan  in $\mathbb{TP}^{14} = \R^{15}/\R (1,1,\ldots,1)$.
This fan can be computed, at least in principle, with 
{\tt Gfan} \cite{gfan}.
However, it does not have good combinatorial properties.
For tropical geometers, it is much
better to pass to the {\em modification} arising from the re-embedding,
seen in Section \ref{sec:toricgopel}, of the G\"opel variety $\mathcal{G}$ in  $\PP^{134}$.
Its ideal is generated by binomials and linear trinomials.  We
 define the {\em tropical G\"opel variety}
 to be the tropical variety of that ideal. This is a six-dimensional fan
 which lives in   $\mathbb{TP}^{134}$. The
 Weyl group $W(\mathrm{E}_7)$ acts on ${\rm trop}(\mathcal{G})$ by permuting coordinates.
 
 We shall construct the tropical G\"opel
 variety  ${\rm trop}(\mathcal{G})$ combinatorially, not from its defining ideal via {\tt Gfan} \cite{gfan},
 but directly from the parametrization given by  (\ref{eq:rootsys}) and (\ref{eq:monopara}).
This is equivalent to the 
parametrization in Section~\ref{sec:param:gopel}, but we have factored it as follows:
\begin{equation}
\label{eq:matrixfac}
 \PP^6 \,  \buildrel{\ell}\over{\longrightarrow} \,
  \PP^{62} 
  \buildrel{m}\over{\dashrightarrow} \, \PP^{134} .
 \end{equation}
The first map $\ell$ is given by evaluating the $63$ linear
forms (\ref{eq:rootsys}) that represent $\mathrm{E}_7$.
The second map $m$ is the monomial map (\ref{eq:monopara}).
It corresponds to the $63 \times 135$-matrix  $\mathcal{A}$ that encodes
incidences of vectors and Lagrangians in $(\F_2)^6 \backslash \{0\}$.
The tropicalization of the monomial map $m$ is given by the
classically-linear map $\mathbb{TP}^{62} \rightarrow \mathbb{TP}^{134}$
defined by its transpose $\mathcal{A}^t$.
The tropicalization of the linear space ${\rm image}(\ell)$ is the
{\em Bergman fan}  of the matroid $M(\mathrm{E}_7)$ of the reflection arrangement of
type $\mathrm{E}_7$, as defined in \cite{ARW}.
This fan lives in $\mathbb{TP}^{62}$, and we have:

\begin{proposition}
\label{prop:tropG}
The tropical G\"opel variety ${\rm trop}(\mathcal{G})$ coincides with 
the image of the Bergman fan of $M(\mathrm{E}_7)$ under the linear map 
$\mathbb{TP}^{62} \rightarrow \mathbb{TP}^{134}$ given by
the matrix $\mathcal{A}^t$ from Section~\ref{sec:toricgopel}.
\end{proposition}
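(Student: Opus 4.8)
The plan is to combine two facts: the parametrization factorization $\PP^6 \xrightarrow{\ell} \PP^{62} \xdashrightarrow{m} \PP^{134}$ from \eqref{eq:matrixfac}, together with the general principle that tropicalization commutes with composition of a closed immersion (of a subvariety defined over a field with trivial valuation) followed by a monomial map, at least on the level of sets. More precisely, first I would recall that $\mathcal{G}$ is the closure of the image of $m \circ \ell$, so ${\rm trop}(\mathcal{G})$ is the closure of the image of the tropicalized composite. By the Fundamental Theorem of tropical geometry (Tevelev–Yu, or Maclagan–Sturmfels, Theorem 3.2.3), since everything is defined over $\C$ with trivial valuation, ${\rm trop}(\mathcal{G})$ as a set equals the closure of $\{(\mathrm{val}(p(t))) : p \in \mathcal{G}(K)\}$ for $K$ a valued extension, and this can be computed from any parametrization: it is the closure of the image of ${\rm trop}(\mathbf{image}(\ell))$ under the tropicalized map ${\rm trop}(m)$.

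Second, I would identify the two pieces. The linear space $\mathbf{image}(\ell) \subset \PP^{62}$ is the image of $\PP^6$ under the $63$ linear forms \eqref{eq:rootsys}, which are precisely the defining forms of the $\mathrm{E}_7$ reflection arrangement; hence this is a linear subspace whose tropicalization is, by definition (Ardila–Klivans, or \cite{ARW}), the Bergman fan of the matroid $M(\mathrm{E}_7)$ living in $\mathbb{TP}^{62}$. For this to be correct I need the $63$ linear forms to be exactly the circuits data of $M(\mathrm{E}_7)$ — equivalently, that the matroid of the $7\times 63$ coefficient matrix of \eqref{eq:rootsys} is the reflection matroid of $\mathrm{E}_7$; this follows because the forms $d_i - (d_1+\cdots+d_7)$, $d_i - d_j$, $d_i + d_j + d_k$ are exactly the positive roots of $\mathrm{E}_7$ in the $d$-coordinate system, as established in Section~\ref{sec:param:gopel}. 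Then the monomial map $m$ corresponds to the $63 \times 135$ incidence matrix $\mathcal{A}$, and the tropicalization of a monomial map is the classically linear map given by the transpose $\mathcal{A}^t \colon \mathbb{TP}^{62} \to \mathbb{TP}^{134}$ (Maclagan–Sturmfels, §3.2); since $m$ is a monomial map we do not even need to take a closure for the image under ${\rm trop}(m)$ of a closed fan, as the image of a fan under a linear map is again a polyhedral complex.

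Third, I would assemble these: ${\rm trop}(\mathcal{G}) = {\rm trop}(\overline{m(\mathbf{image}(\ell))}) = \mathcal{A}^t\bigl({\rm trop}(\mathbf{image}(\ell))\bigr) = \mathcal{A}^t\bigl(\mathbf{Bergman}(M(\mathrm{E}_7))\bigr)$, which is exactly the claimed statement. One subtlety to address carefully is that the composite $m \circ \ell$ agrees, up to the linear change of coordinates between the $c$-system and the $d$-system given by \eqref{eq:fromctod}, with the original parametrization $\gamma$ of Section~\ref{sec:param:gopel}, and that dividing out the common degree-$21$ factor $\prod_{i<j}(d_i - d_j)$ (mentioned before \eqref{equation:gopelproduct}) does not change the tropical image — this is because that common factor only rescales all $135$ coordinates simultaneously and hence acts trivially in $\mathbb{TP}^{134}$; equivalently it is absorbed into the fact that $\mathcal{A}$ has the all-ones vector in the row space, so $\mathcal{A}^t$ is well-defined on tropical projective space.

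The main obstacle is making the ``tropicalization of the image equals image of the tropicalization'' step rigorous rather than merely set-theoretic. The cleanest route is to invoke the result that for a subvariety $X$ of a torus over a trivially valued field and a homomorphism of tori $\phi$, one has ${\rm trop}(\overline{\phi(X)}) = \overline{{\rm Trop}(\phi)({\rm trop}(X))}$ as sets, and then separately observe that $\mathbf{image}(\ell)$ is a linear space so its Bergman fan is already closed and $\mathcal{A}^t$-image is closed. I would be careful to state the proposition as an equality of sets (or of fans up to choice of fan structure), which is all that is claimed; upgrading to an equality of weighted balanced fans would require tracking multiplicities through the possibly non-generic linear map $\mathcal{A}^t$, and I would either restrict the claim to the set-theoretic level or defer the multiplicity analysis to the companion paper \cite{RSS}.
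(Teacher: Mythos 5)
Your proof is correct and follows essentially the same route as the paper, which simply cites \cite[Theorem~3.1]{DFS}---the packaged statement that the tropicalization of the image of a map whose coordinates are products of linear forms equals the image of the Bergman fan of the associated arrangement under the induced linear (transposed incidence) map---and lets that do all the work. Your detailed reconstruction of why the result applies, including the remarks on the removed degree-$21$ common factor and the set-theoretic nature of the claim, merely unpacks what is already absorbed into that single citation.
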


\begin{proof}
The factorization (\ref{eq:matrixfac}) shows
that $\mathcal{G}$ is the image of a map whose coordinates
are  products of linear forms. The result then
follows immediately from \cite[Theorem 3.1]{DFS}.
\end{proof}

\begin{table}
\newcommand{\rA}{\mathrm{A}}
\newcommand{\rD}{\mathrm{D}}
\newcommand{\rE}{\mathrm{E}}

\begin{tabular}{l|l|l|l|l}
Family \# & Rank & Size & Root subsystem & Equations of a representative flat \\
\hline
{\bf 1} & 1 & 63 & $\rA_1$ & $c_7$\\
\hline
{\bf 2} & 2 & 336 & $\rA_2$ & $c_7,c_1+c_3-c_6$\\
3 & 2 & 945 & $\rA_1 \times \rA_1$ & $c_4,c_7$\\
\hline
{\bf 4} & 3 & 1260 & $\rA_3$ & $c_7,c_4,c_1-c_5$\\
5 & 3 & 5040 & $\rA_1 \times \rA_2$ & $c_7,c_4,c_3+c_5-c_6$\\
6 & 3 & 3780 & $\rA_1^{\times 3}$ & $c_6,c_3-c_5,c_2-c_7$\\
7 & 3 & 315 & $\rA_1^{\times 3}$ & $c_7,c_4,c_3$\\
\hline
{\bf 8} & 4 & 2016 & $\rA_4$ & $c_7,c_4,c_3+c_5-c_6,c_1-c_5$\\
{\bf 9} & 4 & 315 & $\rD_4$ & $c_7, c_5, c_4, c_1$\\
10 & 4 & 7560 & $\rA_1 \times \rA_3$ & $c_7,c_5,c_4,c_3-c_6$\\
11 & 4 & 3360 & $\rA_2 \times \rA_2$ & $c_7, c_6, c_3 - c_4 + c_5, c_1 + c_4 + c_5$\\
12 & 4 & 1260 & $\rA_1 \times \rA_3$ & $c_7, c_5 - c_6, c_4, c_3$\\
13 & 4 & 15120 & $\rA_1^{\times 2} \times \rA_2$ & $c_7, c_6, c_2 + c_3 + 2c_4 + c_5, c_1 + c_4 + c_5$\\
14 & 4 & 3780 & $\rA_1^{\times 4}$ & $c_7, c_6, c_5, c_4$\\
\hline
{\bf 15} & 5 & 336& $\rA_5$ & $c_7, c_6, c_5, c_2 - c_4, c_1 + c_4$\\
{\bf 16} & 5 & 1008& $\rA_5$ & $c_7, c_6, c_4, c_3 + c_5, c_1 + c_5$\\
{\bf 17} & 5 & 378 & $\rD_5$ & $c_7, c_6, c_3, c_2 - c_4, c_1$\\
18 & 5 & 6048& $\rA_1 \times \rA_4$ & $c_7, c_6, c_4, c_2 - c_3 - c_5, c_1 + c_5$\\
19 & 5 & 945& $\rA_1 \times \rD_4$ & $c_7, c_6, c_5, c_4, c_3$\\
20 & 5 & 5040& $\rA_2 \times \rA_3$ & $c_7, c_6, c_4, c_2 - 2c_5, c_1 + c_5$\\
21 & 5 & 7560& $\rA_1^{\times 2} \times \rA_3$ & $c_7, c_6, c_5, c_4, c_1 + c_2$\\
22 & 5 & 10080& $\rA_1 \times \rA_2^{\times 2}$ & $c_7, c_6, c_4, c_2 - c_3 - c_5, c_1 - c_3 - 2c_5$\\
23 & 5 & 5040& $\rA_1^{\times 3} \times \rA_2$ & $c_7, c_6, c_4, c_3, c_1 + c_2 - c_5$\\
\hline
{\bf 24} & 6 & 288 & $\rA_6$ & $c_7, c_6, c_4, c_3 + c_5, c_2 - 2c_5, c_1 + c_5$\\
{\bf 25} & 6 & 63 & $\rD_6$ & $c_7, c_6, c_5, c_4, c_3, c_2$\\
{\bf 26} & 6 & 28 & $\rE_6$ & $c_7, c_6, c_4 - c_5, c_3, c_2 - c_5, c_1$\\
27 & 6 & 1008 & $\rA_1 \times \rA_5$ & $c_7, c_6, c_5, c_4, c_2 + c_3, c_1 - c_3$\\
28 & 6 & 378 & $\rA_1 \times \rD_5$ & $c_7, c_6, c_5, c_4, c_2, c_1 + c_3$\\
29 & 6 & 2016 & $\rA_2 \times \rA_4$ & $c_7, c_6, c_4, c_3 + c_5, c_2 - 2c_5, c_1 - 3c_5$\\
30 & 6 & 5040 & $\rA_1 \times \rA_2 \times \rA_3$ & $c_7, c_6, c_4, c_3, c_2 - 2c_5, c_1 + c_5$
\end{tabular}
\caption{The flats of the $\rE_7$ reflection arrangement.}
\label{fig:E7flats}
\end{table}

The first step towards the tropical G\"opel variety ${\rm trop}(\mathcal{G})$
is to list the flats of the matroid $M(\mathrm{E}_7)$.
In Table~\ref{fig:E7flats}, we present the classification of all proper flats of the matroid $M(\mathrm{E}_7)$. There are $30$ orbits of flats under the
$W(\mathrm{E}_7)$-action.
For each $W(\mathrm{E}_7)$-orbit we list the rank, the size of the orbit,
and a set of linear forms that serves as a representative.
These linear forms define an intersection of the hyperplanes
in the reflection arrangement, of codimension equal to the rank,
and the flat consists of all hyperplanes that contain that linear space.

In Table~\ref{fig:E7flats}, we are using the bijection between flats and
parabolic subgroups of the Weyl group, which can be found in \cite[Theorem 3.1]{BI}. These parabolic subgroups correspond to root subsystems in the finite type Dynkin diagram.
We calculated Table~\ref{fig:E7flats} from scratch. Similar information for the lattice of parabolic subgroups can be found in \cite[Table A.2]{GP}. This can also be taken as an independent verification of the correctness of 
Table~\ref{fig:E7flats}.

Eleven of the $30$ orbits consist of irreducible
root subsystems. Their numbers are marked in bold face.
The total number of proper irreducible flats of the matroid $M(\mathrm{E}_7)$ is therefore
$$ 6091 \,\,\,=\,\,\,  63+336+1260+2016+315+1008+336+378+288+63+28. $$
The Bergman fan of $M(\mathrm{E}_7)$ is a six-dimensional fan in $\mathbb{TP}^{62}$.
It has $f_1 = 6091$ rays, one for each irreducible flat $F$.
The rays are generated by their $0$-$1$-incidence vectors $e_F = \sum_{i \in F} e_i$.
A collection $\mathcal{F}$ of flats is {\em nested} if,
for every antichain $\{F_1,F_2,\ldots,F_r\}$ in $\mathcal{F}$ with $r \geq 2$,
the flat $F_1 \vee F_2 \vee \cdots \vee F_r$ is not irreducible.
(This flat represents the subspace obtained by intersecting the given subspaces).
For any nested set $\mathcal{F}$, we consider the convex cone $C_\mathcal{F}$
spanned by the linearly independent vectors $e_F, \,F \in \mathcal{F}$.
The Bergman fan of $M(\mathrm{E}_7)$ is the collection
of all cones $C_\mathcal{F}$ where $\mathcal{F}$ runs
over all nested sets of irreducible flats  of $M(\mathrm{E}_7)$.

By  results of Ardila, Reiner and Williams in \cite[\S 7]{ARW},
this simplicial fan is the coarsest fan structure on its support.
Further, if $f_i$ is the number of $i$-dimensional cones $C_\mathcal{F}$ then
\begin{equation}
\label{eq:eulerchar}
 f_6-f_5+f_4-f_3+f_2-f_1 + 1 \,\,=\,\,
1 \cdot 5 \cdot 7 \cdot 9 \cdot 11 \cdot 13 \cdot 17 \,\,=\,\,  765765. 
\end{equation}
Equation (\ref{eq:eulerchar}) rests on two non-trivial facts from matroid theory. First,
the simplicial complex underlying the Bergman fan is a wedge of $\mu$ spheres,
where $\mu$ is the M\"obius number of the matroid (see \cite[p.42, Corollary]{AK}).
Second, for the matroid of a finite Coxeter system (as in \cite{ARW}),
the M\"obius number $\mu$ is the product of the {\em exponents} of that group~\cite[(1.1)]{OrSo}.
We note that Bramble \cite{Bra} was the first to determine the 
fundamental invariants of $W(\mathrm{E}_7)$. As for any reflection group,
their degrees  $2,6,8,10,12,14,18$ are the exponents plus one.

\begin{corollary}
The tropical G\"opel variety ${\rm trop}(\mathcal{G})$ in $\mathbb{T}^{134}$ is
the union of the convex polyhedral cones $\mathcal{A}^t C_\mathcal{F}$
where $\mathcal{F}$ runs over all nested sets of irreducible root subsystems of $\mathrm{E}_7$.
\end{corollary}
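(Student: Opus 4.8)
The plan is to deduce the Corollary directly from Proposition~\ref{prop:tropG} together with the cone-by-cone description of the Bergman fan that was recalled just above it. First I would observe that, by Proposition~\ref{prop:tropG}, the tropical G\"opel variety $\mathrm{trop}(\mathcal{G})$ is, \emph{as a set}, the image $\mathcal{A}^t\big(B\big)$ of the Bergman fan $B = \mathrm{Berg}(M(\mathrm{E}_7))$ under the $\mathbb{Q}$-linear map $\mathbb{TP}^{62}\to\mathbb{TP}^{134}$ given by the transpose of the incidence matrix $\mathcal{A}$. Since the image of a union of sets under a map is the union of the images, it then suffices to rewrite $B$ as an explicit union of cones and apply $\mathcal{A}^t$ to each of them.

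Next I would invoke the nested-set fan structure on $B$ recorded before the Corollary: $B$ is the union of the simplicial cones $C_\mathcal{F}=\mathrm{cone}\{e_F : F\in\mathcal{F}\}$ as $\mathcal{F}$ ranges over all nested sets of proper irreducible flats of $M(\mathrm{E}_7)$ (the Feichtner--Sturmfels / Ardila--Klivans description, which for $\mathrm{E}_7$ is even the coarsest fan structure on its support by \cite[\S7]{ARW}). Hence $\mathrm{trop}(\mathcal{G}) = \mathcal{A}^t\big(\bigcup_\mathcal{F} C_\mathcal{F}\big) = \bigcup_\mathcal{F} \mathcal{A}^t C_\mathcal{F}$, the union being over nested sets of proper irreducible flats.

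The remaining ingredient is the dictionary between the matroid $M(\mathrm{E}_7)$ and the root system. A flat of $M(\mathrm{E}_7)$ is the set of reflection hyperplanes containing a fixed subspace, equivalently a parabolic subgroup of $W(\mathrm{E}_7)$, equivalently a root subsystem of $\mathrm{E}_7$; under this correspondence (cf.\ \cite[Theorem 3.1]{BI} and Table~\ref{fig:E7flats}) a flat is irreducible in the matroid sense — i.e.\ not a proper direct sum of two smaller flats — exactly when the associated Dynkin diagram is connected, that is, when the root subsystem is irreducible, and the nested-set condition on irreducible flats translates verbatim into the nested-set condition on irreducible root subsystems. Substituting ``nested set of irreducible root subsystems of $\mathrm{E}_7$'' for ``nested set of proper irreducible flats of $M(\mathrm{E}_7)$'' in the displayed union gives the assertion of the Corollary.

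The only point that needs care — and which I expect to be the sole real subtlety — is that this is a purely set-theoretic identity. Because $\mathcal{A}$ has rank $36 < 63$, the map $\mathcal{A}^t$ is not injective on a general cone $C_\mathcal{F}$, so the images $\mathcal{A}^t C_\mathcal{F}$ may overlap and need not assemble into a polyhedral fan, and recovering the tropical multiplicities on $\mathrm{trop}(\mathcal{G})$ would require the push-forward formula for tropical cycles rather than the naive union. Since the Corollary claims only the equality of underlying sets, it is enough to know that $\mathcal{A}^t$ carries the support of $B$ onto the support of $\mathrm{trop}(\mathcal{G})$, which is precisely Proposition~\ref{prop:tropG}; no further argument is needed.
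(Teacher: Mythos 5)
Your proof is correct and follows essentially the same route as the paper's one-line argument: apply Proposition~\ref{prop:tropG} to the nested-set cone decomposition of the Bergman fan of $M(\mathrm{E}_7)$ recalled from Ardila--Reiner--Williams, and translate irreducible flats into irreducible root subsystems via the parabolic-subgroup dictionary. Your remark that the identity is purely set-theoretic (so non-injectivity of $\mathcal{A}^t$ is harmless) is a welcome clarification that the paper leaves implicit.
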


\begin{proof}
This follows from Proposition \ref{prop:tropG}
and the construction of the Bergman fan in \cite{ARW}.
\end{proof}

Another approach to the tropical G\"opel variety
${\rm trop}(\mathcal{G})$ is to use the formulas
(\ref{eq:fano2}) and (\ref{eq:pascal2})
for the G\"opel functions $f_\bullet, g_\bullet$
as polynomials of degree $7$ in the brackets $[ijk]$.
This defines a  rational map from
the Grassmannian ${\rm Gr}(3,7) 
\subset \PP^{34} $ to the G\"opel variety $\mathcal{G} \subset \PP^{134}$.
The tropicalization of this map is a piecewise
linear map $\mu \colon \mathbb{TP}^{34} \rightarrow \mathbb{TP}^{134}$.
Note that the map $\mu$ is not linear because
the expression for $g_\bullet$ in the brackets $[ijk]$ is a binomial 
and not a monomial. We expect that this rational map 
does not commute with tropicalization. 

The image of 
the tropical Grassmannian ${\rm trop}({\rm Gr}(3,7))$
under the piecewise linear map $\mu$ is a subfan of the
tropical G\"opel variety ${\rm trop}({\cal G})$.
It would be interesting to identify that subfan.
Note that, by \cite[Theorem 2.1]{HJJS}, that tropical Grassmannian has the face numbers
$$ 
f \bigl( {\rm trop}({\rm Gr}(3,7)) \bigr) \quad = \quad
\bigl(721, 16800, 124180, 386155, 522585, 252000 \bigr)  . 
$$
The tropical G\"opel variety  ${\rm trop}(\mathcal{G})$
 is a modification and its face numbers are even larger.

The positive part of the tropical Grassmannian governs the combinatorics of the
{\em cluster algebra} structure on the coordinate ring of ${\rm Gr}(3,7)$.
Interestingly, the  Weyl group relevant for this is of type $\mathrm{E}_6$ and not $\mathrm{E}_7$.
Namely, it is shown in \cite[\S 7]{SW} that
the normal fan of the $\mathrm{E}_6$-associahedron
defines a simplicial fan structure on the positive part of 
$ {\rm trop}({\rm Gr}(3,7)) $, with f-vector $(42, 399, 1547, 2856, 2499, 833)$.
The relationship to cluster algebras  is explained in \cite[\S 8]{SW}.
In light of this, it would be interesting to examine the positive part of
 ${\rm trop}(\mathcal{G})$.
 
The Satake hypersurface $\mathcal{S}$ lives in $\PP^7$, and it is
parametrized by $\mathfrak{H}_3$ via the theta constant map $\vartheta$.
The Newton polytope of its defining polynomial of degree $16$
has  face numbers
$$ f \bigl({\rm Newton}(\mathcal{S})\bigr) \,\,=\,\, (344, 2016, 3584, 2828, 1120, 224, 22). $$
The corresponding tropical hypersurface 
${\rm trop}(\mathcal{S})$ in $\mathbb{TP}^7$ is a six-dimensional fan with
$22$ rays and $2016$ maximal cones. 
Now we shall explain and derive the following result:

\begin{proposition}
\label{prop:melody}
The image of the tropical Siegel space
${\rm trop}(\mathfrak{H}_3)$
under the piecewise-linear map ${\rm trop}(\vartheta)$
is the intersection of ${\rm trop}(\mathcal{S})$
with the normal cone of
${\rm Newton}(\mathcal{S})$ at the vertex
\begin{equation}
\label{eq:monom}
M \quad = \quad  - 2 u_{000}^9 u_{001} u_{010} u_{011} u_{100} u_{101} u_{110} u_{111}. 
\end{equation}
The map ${\rm trop}(\vartheta)$ induces the level structure on ${\rm trop}(\mathcal{A}_3)$ described by Chan in  \cite[\S 7.1]{Melody}.
\end{proposition}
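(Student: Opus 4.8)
The plan is to make the piecewise-linear map ${\rm trop}(\vartheta)$ completely explicit and then read off both assertions from the known Newton polytope of $\mathcal{S}$ together with Chan's combinatorial model of ${\rm trop}(\mathcal{A}_3)$. First I would tropicalize $\vartheta$ through the Fourier--Jacobi / $q$-expansion already used around (\ref{eq:fourier}): writing $q_{jk}$ for the nome of the entry $\tau_{jk}$, each second-order theta constant $\Theta_2[\sigma](\tau;0)$ becomes a power series in the $q_{jk}$ whose term indexed by $n\in\Z^3$ has valuation $(n+\sigma/2)^{t}Y(n+\sigma/2)$, where $Y$ is the symmetric matrix of valuations $\mathrm{val}(q_{jk})$. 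Since each such leading term has coefficient $1$ and, for generic $Y$, a unique minimizing lattice point, the induced tropical map sends a positive semidefinite $Y$ with rational kernel --- i.e.\ a point of the tropical Siegel space ${\rm trop}(\mathfrak{H}_3)$ in the sense of \cite[\S 7]{Melody} --- to the point of $\mathbb{TP}^7$ with coordinates $q_\sigma(Y)=\min\{(n+\sigma/2)^{t}Y(n+\sigma/2):n\in\Z^3\}$, normalized so that $q_{000}(Y)=0$. Identifying this function, together with its domains of linearity (the cones of the perfect / second Voronoi decomposition of the space of quadratic forms, which for $g=3$ has few cones up to $\mathrm{GL}_3(\Z)$), with the level structure on ${\rm trop}(\mathcal{A}_3)$ of \cite[\S 7.1]{Melody} is the second assertion.

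Containment in ${\rm trop}(\mathcal{S})$ is then immediate: $\mathcal{S}(\vartheta(\tau))$ vanishes identically as a series in the $q_{jk}$, so among the summands $c_{M'}\prod_\sigma\Theta_2[\sigma]^{e(M')_\sigma}$ the monomials $M'$ achieving the minimal valuation $\langle{\rm trop}(\vartheta)(Y),e(M')\rangle$ must cancel, hence that minimum is attained at least twice, which by Kapranov's criterion means ${\rm trop}(\vartheta)(Y)\in{\rm trop}(\mathcal{S})$. To see that the image also lies in the closed normal cone of ${\rm Newton}(\mathcal{S})$ at the monomial $M$ of (\ref{eq:monom}) --- equivalently, that $M$ is always among the monomials of minimal valuation --- observe that the exponent vector $e(M)$ equals the all-ones vector $\mathbf{1}$ plus $8$ times the standard basis vector at $000$. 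Thus, for the normalized $w={\rm trop}(\vartheta)(Y)$ with $w_{000}=0$, the inequality $\langle w,e(M)\rangle\le\langle w,e(M')\rangle$ reduces to $\langle w,e(M')-\mathbf{1}\rangle\ge 0$ for every monomial $M'$ of $\mathcal{S}$. Each of these unravels into a linear inequality among the $q_\sigma(Y)$: some follow from the parallelogram bound $q_{\sigma\oplus\tau}(Y)\le 2q_\sigma(Y)+2q_\tau(Y)$, and the rest from the structure of short coset vectors; all can be verified on the finitely many maximal Voronoi cones for $g=3$. Recording the equality cases in this check also shows that the image meets every facet of that normal cone.

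The remaining point is equality. Both ${\rm trop}(\vartheta)({\rm trop}(\mathfrak{H}_3))$ and the intersection of ${\rm trop}(\mathcal{S})$ with the normal cone at $M$ are pure six-dimensional polyhedral fans in $\mathbb{TP}^7$ --- the former because on a top Voronoi cone the map $Y\mapsto(q_\sigma(Y))_\sigma$ is linear and generically injective, the latter because it is the relative boundary of a full-dimensional cone --- so it suffices to match maximal cones. Using the $22$ rays and $2016$ maximal cones of ${\rm trop}(\mathcal{S})$ and the explicit vertex list of ${\rm Newton}(\mathcal{S})$, I would check that every maximal cone of ${\rm trop}(\mathcal{S})\cap(\text{normal cone at }M)$ is the image under ${\rm trop}(\vartheta)$ of a Voronoi cone; this is a finite computation, and carrying it out simultaneously yields the identification with Chan's picture. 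I expect the main obstacle to be precisely this last step, in two forms: pinning down rigorously that the transcendental map $\vartheta$ tropicalizes to $Y\mapsto(q_\sigma(Y))$ \emph{on the nose} --- for which one should use the degeneration theory of abelian varieties rather than a purely formal $q$-expansion argument --- and then proving that the image fan is \emph{all} of ${\rm trop}(\mathcal{S})\cap(\text{normal cone at }M)$ and not a proper subfan, which requires translating the combinatorics of \cite[\S 7.1]{Melody} into the theta coordinates on $\mathbb{TP}^7$.
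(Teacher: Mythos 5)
Your proposal follows essentially the same route as the paper: the paper also defines ${\rm trop}(\vartheta)$ by the min formula of Mikhalkin--Zharkov (so there is no issue of tropicalizing a transcendental map), proves containment in ${\rm trop}(\mathcal{S})$ by expanding $\vartheta(i\rho T)$ in $\epsilon = e^{-\rho}$ and noting that the identity $\mathcal{S}(\vartheta(\tau))\equiv 0$ forces a tie among leading orders, and then identifies the image with the seven facets of the normal cone at $M$ by a finite verification that the only leading forms selected by generic $T \in {\rm PD}_3$ are the seven binomials along the edges of ${\rm Newton}(\mathcal{S})$ incident to $M$, with the seven resulting classes of cones in ${\rm PD}_3$ (labeled by lines of the Fano plane) matching Chan's level structure. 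The only difference is cosmetic: you propose first to argue the normal-cone inequalities $\langle w, e(M')-\mathbf{1}\rangle \ge 0$ via parallelogram-type bounds before checking on maximal Voronoi cones, whereas the paper records the computational outcome of that check directly.
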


\begin{proof}
We first define the terms in Proposition \ref{prop:melody}.
Following \cite{CMV, MZ}, the {\em tropical Siegel space}
${\rm trop}(\mathfrak{H}_3)$ is  the cone 
${\rm PD}_3$ of positive definite real symmetric $3 \times 3$-matrices.
We use tropical theta functions as described by Mikhalkin and Zharkov in \cite{MZ}.
The coordinates of the {\em tropical theta constant map} ${\rm trop}(\vartheta)$
are indexed by $\sigma \in \{0,1\}^3$, and they are defined~by
\begin{equation}
\label{eq:troptheta}
 {\rm trop}(\vartheta)_\sigma(T) \,\,\, = \,\,\, {\rm min} \bigl\{
(n  + \sigma/2)^t \cdot T \cdot (n + \sigma/2) \,:\,
n \in \mathbb{Z}^3 \bigr\} \qquad \hbox{for} \quad  T \in {\rm PD}_3  .
\end{equation}
The function ${\rm trop}(\vartheta)_\sigma$ 
from ${\rm PD}_3$ to $\R$ is well-defined and takes non-negative  values
since $T$ is positive definite.
It is zero for all $T$ when $\sigma = (0,0,0)$.
Hence we have a well-defined map
$$ 
{\rm trop}(\vartheta) \colon {\rm PD}_3 \,\rightarrow \, {\rm trop}(\mathcal{S}) \,\subset \,\mathbb{TP}^7. 
$$
To show that the image lands in the tropical Satake hypersurface, we set  $\tau = i \rho T$
in  (\ref{thetaconstants}) and (\ref{eq:2ndorder}), where
$\rho \rightarrow \infty$ is a real parameter.
Write $\Theta_2[\sigma](\tau;0)$ as a series in
$\epsilon = {\rm exp}(-\rho)$, with (\ref{eq:troptheta}) as the
exponent of the lowest term. Then each coordinate of
$\vartheta(\tau)$ is a series in $\epsilon$. Plugging these
eight series into the Satake polynomial $\mathcal{S}$, we obtain zero.
Compare the $471$ monomials of $\mathcal{S}$ according to their
order in $\epsilon$ after this substitution.
Two (or more) of the monomials  must have 
the same lowest order in $\epsilon$.
This means that ${\rm trop}(\vartheta(T)) $ lies in~${\rm trop}(\mathcal{S})$.

The vertex of ${\rm Newton}(\mathcal{S})$ given by the monomial
$M$ in (\ref{eq:monom}) is simple, i.e. it has precisely seven adjacent edges.
These edges are the Newton segments of the seven binomials
\begin{equation}
\label{eq:finalfano}
\begin{matrix}
u_{000}^8 u_{001}^2 u_{010}^2 u_{100}^2 u_{111}^2  +  M, & 
u_{000}^8 u_{001}^2 u_{010}^2 u_{101}^2 u_{110}^2  +  M, & 
u_{000}^8 u_{001}^2 u_{011}^2 u_{100}^2 u_{110}^2  +  M, \\
u_{000}^8 u_{001}^2 u_{011}^2 u_{101}^2 u_{111}^2  +  M, & 
u_{000}^8 u_{010}^2 u_{011}^2 u_{100}^2 u_{101}^2  +  M, & 
u_{000}^8 u_{010}^2 u_{011}^2 u_{110}^2 u_{111}^2  +  M, \\
u_{000}^8 u_{100}^2 u_{101}^2 u_{110}^2 u_{111}^2  +  M. & & 
\end{matrix}
\end{equation}
Our computations revealed that these seven are all the
$\epsilon$-leading forms of $\mathcal{S}$
selected by generic matrices $T \in {\rm PD}_3$.
Hence the image of ${\rm trop}(\vartheta)$ consists of
the seven outer normal cones at these edges. This
is precisely what we had claimed in the first statement in  Proposition  \ref{prop:melody}.

The correspondence with the level structure described in \cite[\S 7.1]{Melody}
is seen as follows. The domains of linearity of ${\rm trop}(\vartheta)$
define a subdivision of ${\rm PD}_3$ into infinitely many convex polyhedral cones.
This subdivision is a coarsening of the {\em second Voronoi decomposition}.
We group the cones into seven classes, 
according to which binomial in (\ref{eq:finalfano}) gets selected by $T$.

The seven classes in ${\rm PD}_3$ are naturally labeled by the
seven lines in the Fano plane $\mathbb{P}^2(\mathbb{F}_2)$.
 These lines are given by the  three variables
missing in the respective leading monomials in (\ref{eq:finalfano}).
For instance, the first binomial in (\ref{eq:finalfano})
determines the line $\{(1{:}1{:}0),(1{:}0{:}1),(0{:}1{:}1)\}$
because $u_{110}, u_{101}, u_{011}$ are missing
in the monomial prior to $M$.
This subdivision of ${\rm PD}_3$ into seven classes
is precisely the level structure that was discovered
by Chan in \cite[\S 7.1]{Melody}.
\end{proof}

We expect an even more beautiful structure
when tropicalizing the re-embedding
$\mathcal{S}'$ of the Satake hypersurface $\mathcal{S}$ 
into $\PP^{35}$ given by the quadrics
$(T^\epsilon_{\epsilon'})^2$ in (\ref{addition}).
To see this, we revisit the combinatorial result
 in \cite[Theorem 3.1]{glass}. Glass classified the points
 in  $\mathcal{S}' \subset \PP^{35}$ according to which of
 the $36$ coordinates are zero at that point.
Up to symmetry, there are seven {\em Glass strata} in $\mathcal{S}'$.
Table~\ref{fig:glass} identifies the irreducible components
of these strata and their geometric meaning.
Glass\# is the number of coordinates
$(T^\epsilon_{\epsilon'})^2$ that are~zero.

\begin{table} 
\begin{tabular}{l|l|l|l|l}
Codim  & Glass\# & Geometric description & Components & How Many 
 \\
\hline
1 & 1 & Hyperelliptic locus & quadric $\cap \mathcal{S}$ & $36$\\
2 & 6 & Torelli boundary & $\PP^3 \times \PP^1$ & $336$\\
3 & 9 & Product of three elliptic curves & $\PP^1 \times \PP^1 \times \PP^1$ & $ 1120 $\\
3 & 16 & Satake boundary $\mathcal{A}_2(2,4)$ & $\PP^3 $ & $126$\\
4 & 18 & Torelli boundary of $\mathcal{A}_2(2,4)$  & quadric in $\PP^3 $\ & $1260$\\
5 & 24 & Satake stratum $\mathcal{A}_1(2,4)$ & $\PP^1 $ & $1260$\\
6 & 28 & Satake stratum $\mathcal{A}_0(2,4)$ & point & $1080$\\
\end{tabular}
\caption{Glass strata in the Satake hypersurface}
\label{fig:glass}
\end{table}

Table~\ref{fig:glass} refines our analysis of the Satake hypersurface
in Section~\ref{sec:satakehyper}. For each irreducible
component we know  their defining polynomials
in the unknowns $u_{000}, u_{001}, \ldots, u_{111}$.
These played an important role in this paper.
For instance, the components for Glass\# 16 are
the spaces $H_\epsilon^{\pm} \simeq \PP^3$ in (\ref{eq:fixedspace}),
and the components for Glass\# 6 are the
Segre varieties in (\ref{eq:2x4matrix}).

Now consider the tropical variety ${\rm trop}(\mathcal{S}')$ in $\mathbb{TP}^{35}$.
This is a modification of the tropical hypersurface in $\mathbb{TP}^{7}$ given
by ${\rm Newton}(\mathcal{S})$. It has much better properties
since the coordinate functions now have a geometric meaning:
they are the $36$ divisors of the hyperelliptic locus. These appear
as distinguished rays in  ${\rm trop}(\mathcal{S}')$.
By the principle of {\em geometric tropicalization} \cite[\S 2]{HKT},
they span a distinguished subfan of ${\rm trop}(\mathcal{S}')$.
Here, a collection of coordinate rays spans a cone
if and only if the corresponding intersection of divisors
appears in Table~\ref{fig:glass}.

The poset of Glass strata seems to embed naturally
into the poset of cones in the tropical moduli space  ${\rm trop}(\mathcal{A}_3)$.
Compare \cite[Theorem 3.1]{glass} with  \cite[Figure 8]{Melody}.
This deserves to be studied in more detail.
Can the table above be lifted to a
tropical level structure on $\mathcal{A}_3$?
The  finite groups of Section  \ref{sec:satakehyper}
act on the  fans we described. Taking quotients by 
these groups leads to objects known as {\em stacky fans}. 
The {\em tropical Torelli map} of \cite{Melody,  CMV}
is a morphism of
stacky fans from ${\rm trop}(\mathcal{M}_3)$ onto 
${\rm trop}(\mathcal{A}_3)$. 
It would be desirable to express the combinatorics
of the tropical Torelli map directly in terms of
the polynomials and ideals in this paper.


In the first version of this article we asked the following  question:
What is the  relationship between the G\"opel variety $\mathcal{G}$
and the moduli space $Y(\Delta)$ of degree $2$ del Pezzo surfaces constructed
by Hacking, Keel, and Tevelev in  \cite{HKT}?
The latter is a tropical compactification
of the  configuration space of seven points in $\PP^2$.
In the meantime, we were able answer this question.
It turns out that ${\rm trop}(\mathcal{G})$ has the same support as
their fan $\mathcal{F}(\Delta)$ in \cite[Corollary 5.3]{HKT}. 
This is explained in \cite{RSS}. 
That paper features calculations with explicit moduli spaces, mostly
in genus $2$, that are considerably smaller than the ones we studied here. Our
readers may enjoy looking at  \cite{RSS} as a point of entry also to the present~work.

\bigskip

\noindent
\footnotesize {\bf Authors' address}:
Department of Mathematics, University of California, Berkeley, CA 94720, USA. \hfill
{\tt \{qingchun,svs,guss,bernd\}@math.berkeley.edu}
\end{document}